\documentclass[12pt]{article}
\usepackage{amssymb,amsmath,amsthm,amscd,latexsym}
\usepackage{mathrsfs}
\usepackage{indentfirst}
\usepackage[shortlabels]{enumitem}
\usepackage{anysize}
\usepackage{hyperref}
\usepackage{color}
\usepackage{longtable}

\renewcommand{\paragraph}{\roman{paragraph}}
 \setlength{\topmargin}{0in}
\setlength{\textheight}{8.5in} \setlength{\textwidth}{6.0in}
\oddsidemargin=0.3in \evensidemargin=-0.3in
\input cyracc.def

\parskip 3pt

\newcommand{\Q}{\mathbb{Q}}

\newcommand{\Z}{\mathbb{Z}}
\newcommand{\BP}{{\mathbb P}}

\newcommand{\F}{\mathbb{F}}

\newcommand{\qbinom}[3]{\genfrac{[}{]}{0pt}{}{#1}{#2}_{#3}}

\DeclareMathOperator{\PG}{PG}
\DeclareMathOperator{\RM}{RM}
\DeclareMathOperator{\Sim}{Sim}

\newtheorem{theorem}{Theorem}[section]
\newtheorem{lemma}[theorem]{Lemma}
\newtheorem{corollary}[theorem]{Corollary}

\newtheorem{conjecture}[theorem]{Conjecture}
\numberwithin{table}{section}
\theoremstyle{definition}
\newtheorem{definition}[theorem]{Definition}
\theoremstyle{remark}
\newtheorem{remark}[theorem]{Remark}

\newcommand{\SWRGparam}{s}

\begin{document}
\title{\bf On strongly walk regular graphs,\\ triple sum sets and their codes}

\author{
Michael Kiermaier\thanks{Department of Mathematics, University of Bayreuth, Bayreuth, Germany},
Sascha Kurz$^*\!\!$, 
Patrick Sol\'e\thanks{I2M, CNRS, Aix-Marseille Univ, Centrale Marseille, Marseille, France},\\
Michael Stoll$^*$, and 
Alfred Wassermann$^*$
}

\date{}

\maketitle
\noindent
{\bf Abstract:} {
Strongly walk regular graphs (SWRGs or $s$-SWRGs) form a natural generalization of strongly regular graphs (SRGs) where paths of length~2 are replaced by paths of length~$s$.
They can be constructed as coset graphs of the duals of projective three-weight codes whose weights satisfy a certain equation.
We provide classifications of the feasible parameters of these codes in the binary and ternary case for medium size code lengths.
For the binary case, the divisibility of the weights of these codes is investigated and several general results are shown.

It is known that an $s$-SWRG has at most 4 distinct eigenvalues $k > \theta_1 > \theta_2 > \theta_3$, and that the triple $(\theta_1, \theta_2, \theta_3)$ satisfies a certain homogeneous polynomial equation of degree $s - 2$ (Van Dam, Omidi, 2013).
This equation defines a plane algebraic curve; we use methods from algorithmic arithmetic geometry to show that for $s = 5$ and $s = 7$, there are only the obvious solutions, and we conjecture this to remain true for all (odd) $s \ge 9$.}

\noindent
{\bf Keywords:} strongly walk-regular graphs, triple sum sets, three-weight codes.

\noindent
{\bf MSC (2010):} Primary 05E30, Secondary 11D41, 94B05 

\section{Introduction}
\label{sec_introduction}
A strongly regular graph (SRG) is a regular graph such that the number of common neighbors of two distinct vertices depends only on whether these vertices are
adjacent or not. They arise in a lot of applications, see e.g.\ \cite{brouwer2011spectra}. As first observed in \cite{delsarte1972weights}, there is a strong link
to projective two-weight codes, see \cite{calderbank1986geometry} for a survey. The notion of SRGs has been generalized to distance-regular graphs or association
schemes. Noting that the number of common neighbors of two vertices equals the number of walks of length two between them, strongly walk-regular graphs (SWRG) were
introduced in \cite{van2013strongly}. A graph is an $\SWRGparam$-SWRG if the number of walks of length $\SWRGparam$ from a vertex to another vertex depends only on
whether the two vertices are the same, adjacent, or not adjacent. Note that SRGs are $s$-SWRGs for all $\SWRGparam>1$.
In \cite[Theorem 3.4]{van2013strongly} it is shown that
the adjacency matrix of a SWRG has at most four distinct eigenvalues and the following characterization of SWRGs is given.

\begin{lemma}[van Dam, Omidi {\cite[Proposition~4.1]{van2013strongly}}]
\label{lem:vandam_omidi}
Let $\Gamma$ be a $k$-regular graph with four distinct eigenvalues $k>\theta_1>\theta_2>\theta_3$.
Then $\Gamma$ is an $\SWRGparam$-SWRG for $\SWRGparam\ge 3$ if and only if
\begin{equation}
  (\theta_2-\theta_3)\theta_1^{\SWRGparam}+(\theta_3-\theta_1)\theta_2^{\SWRGparam}+(\theta_1-\theta_2)\theta_3^{\SWRGparam}=0.\label{eq_diophantine_vandam}
\end{equation}
\end{lemma}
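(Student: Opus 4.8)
The plan is to reformulate the $\SWRGparam$-SWRG property of $\Gamma$ as the single matrix condition that $A^{\SWRGparam}$ lie in the linear span $\langle I,A,J\rangle$, where $A$ is the adjacency matrix of $\Gamma$ on $n$ vertices, $I$ is the identity matrix, and $J$ is the all-ones matrix. Indeed, $(A^{\SWRGparam})_{uv}$ counts the walks of length $\SWRGparam$ from $u$ to $v$, so $\Gamma$ is an $\SWRGparam$-SWRG exactly when there are scalars $\nu,\lambda,\mu$ for which this entry equals $\nu$ if $u=v$, equals $\lambda$ if $u\sim v$, and equals $\mu$ otherwise; describing the non-edge positions by the matrix $J-I-A$, this says precisely $A^{\SWRGparam}=(\nu-\mu)I+(\lambda-\mu)A+\mu J$. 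Conversely, any such expression makes the entries of $A^{\SWRGparam}$ depend only on the type of the pair $(u,v)$ (the diagonal of $A$ being zero). Hence the lemma is equivalent to the assertion: $A^{\SWRGparam}\in\langle I,A,J\rangle$ if and only if \eqref{eq_diophantine_vandam} holds.

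Next I would pass to the spectrum. Assuming $\Gamma$ connected, as in the setting of the cited result (so that $k$ is a simple eigenvalue), $A$ is symmetric and $\mathbb{R}^n$ decomposes orthogonally as $\langle\mathbf{1}\rangle\oplus V_1\oplus V_2\oplus V_3$, where $\mathbf{1}$ is the all-ones vector spanning the $k$-eigenspace and $V_i\neq\{0\}$ is the $\theta_i$-eigenspace of $A$. On $\langle\mathbf{1}\rangle$ the matrices $I,A,J$ act as the scalars $1,k,n$; on each $V_i$ they act as $1,\theta_i,0$. Testing the putative identity $A^{\SWRGparam}=pI+qA+rJ$ against these eigenspaces, the $\langle\mathbf{1}\rangle$-component yields $k^{\SWRGparam}=p+qk+rn$, which (as $n\neq 0$) merely fixes $r$ once $p,q$ are chosen, whereas the $V_i$-components yield
\begin{equation*}
  \theta_i^{\SWRGparam}=p+q\,\theta_i,\qquad i=1,2,3.
\end{equation*}
Therefore $A^{\SWRGparam}\in\langle I,A,J\rangle$ if and only if this $3\times 2$ linear system in the unknowns $(p,q)$ is consistent.

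Finally, since $\theta_1,\theta_2,\theta_3$ are pairwise distinct, the coefficient matrix $\bigl(\begin{smallmatrix}1&\theta_1\\1&\theta_2\\1&\theta_3\end{smallmatrix}\bigr)$ has rank $2$, so the system is consistent if and only if its augmented matrix is singular:
\begin{equation*}
  \det\begin{pmatrix} 1 & \theta_1 & \theta_1^{\SWRGparam}\\ 1 & \theta_2 & \theta_2^{\SWRGparam}\\ 1 & \theta_3 & \theta_3^{\SWRGparam}\end{pmatrix}=0.
\end{equation*}
Expanding the determinant along its last column gives $\theta_1^{\SWRGparam}(\theta_3-\theta_2)+\theta_2^{\SWRGparam}(\theta_1-\theta_3)+\theta_3^{\SWRGparam}(\theta_2-\theta_1)=0$, which is \eqref{eq_diophantine_vandam} after multiplying by $-1$; this settles both implications simultaneously. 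As a byproduct, dividing the determinant by the nonzero Vandermonde factor $(\theta_1-\theta_2)(\theta_1-\theta_3)(\theta_2-\theta_3)$ rewrites the condition as the vanishing of the complete homogeneous symmetric polynomial of degree $\SWRGparam-2$ in $\theta_1,\theta_2,\theta_3$, matching the degree announced for the curve. I expect the only delicate points to be the bookkeeping around the all-ones eigenvector — one must check that it never obstructs membership in $\langle I,A,J\rangle$, which it does not because the coefficient of $J$ is free — together with the standing connectedness assumption; everything else is linear algebra.
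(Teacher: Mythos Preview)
The paper does not give its own proof of this lemma; it is quoted verbatim from van Dam and Omidi \cite[Proposition~4.1]{van2013strongly} and used as a black box. So there is nothing in the present paper to compare against.

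That said, your argument is correct and is essentially the standard proof (and the one in the cited source): translate the $\SWRGparam$-SWRG condition into $A^{\SWRGparam}\in\langle I,A,J\rangle$, diagonalise, and observe that the $J$-component is automatically handled by the all-ones eigenvector while the remaining three eigenspaces force the Vandermonde-type determinant to vanish. Your treatment of the $\langle\mathbf{1}\rangle$-component is the right way to dispose of $r$, and the consistency criterion for the $3\times 2$ system is exactly what produces \eqref{eq_diophantine_vandam}. The connectedness caveat you flag is genuine: without it $k$ need not be simple and the decomposition fails; van Dam and Omidi work throughout with connected regular graphs, so this is a standing hypothesis rather than a gap in your reasoning.
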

Moreover, it is known that $\SWRGparam$ has to be odd. All known examples for $\SWRGparam$-SWRGs with $\SWRGparam>3$ satisfy
$\theta_2=0$ and $\theta_3=-\theta_1$, where Equation~(\ref{eq_diophantine_vandam}) is automatically satisfied for all odd
$\SWRGparam\ge 3$.

Mimicking the mentioned link between SRGs and projective two-weight codes, a construction of SRWGs as coset graphs of the duals of projective three-weight codes was
given recently in \cite{shi2019three}.
The eigenvalues of such graphs are integral and depend on the weights of the three-weight code, so that Equation~(\ref{eq_diophantine_vandam})
turns into a number theory question.
In \cite{shi2019three}, a construction of SWRGs from triple sum sets (TSS) is given.

Several research papers consider the feasible parameters of SRGs, see e.g.~\cite{brouwerSRG} for a large table
together with references summarizing the state of knowledge.
We remark that the smallest cases, where the existence or non-existence of a SRG is unclear,
consist of $65$ or $69$ vertices. The corresponding
parameters cannot be attained by two-weight codes since these always give graphs
where the number of vertices is a power of the field size.
Still, the existence of projective two-weight codes is an important source for
the construction of SRGs,
see e.g.~\cite{kohnert2007constructing}, where several new examples have
been found. An online database of known two-weight codes can be found at
\cite{twoweighttables}. Due to a result
of Delsarte \cite[Corollary 2]{delsarte1972weights} the possible weights
of two-weight codes are quite restricted, see Lemma~\ref{lemma_delsarte}.

Given the relation between the weights of a projective three-weight code and the eigenvalues of the coset graph of its dual, corresponding solutions of
Equation~(\ref{eq_diophantine_vandam}) can be easily enumerated.
However, not all cases are feasible, i.e., attainable by a projective three-weight code.
The aim of this paper is to study feasibility for the smallest cases.
For binary codes we give results for lengths smaller than $72$ and for ternary codes for lengths
up to $39$. Within that range only very few cases are left as open problems.
This extends and corrects first enumeration results from \cite{shi2019three}.
Similar results for some special rings instead of finite fields are obtained in \cite{kiermaier2019three}.

The remaining part of this paper is organized as follows.
The necessary preliminaries are introduced in Section~\ref{sec_preliminaries} followed by the enumeration results
in Section~\ref{sec_feasible_parameters}. In Section~\ref{sec_sum_monomials} it is shown that for $s=5$ and $s=7$ the only rational solutions of Equation~(\ref{eq_diophantine_vandam})
are given by the parametric solution $\theta_2=0$, $\theta_3=-\theta_1$.
For $s = 5$, this reduces to the determination of the set of rational points on an elliptic curve and for $s = 7$, it leads to a curve of genus~$2$.

The computational results from Section~\ref{sec_feasible_parameters} for the case $q=2$ suggest
that projective three-weight codes of length $n$ whose weights satisfy
$w_1+w_2+w_3=3n/2$ possess a high divisibility of the weights and the length by powers of two.
In Section~\ref{sec_divisibility} this is shown, see Lemma~\ref{lem:divisibility_2} and the following theorems for the details.
In Appendix~\ref{appendix_generator_matrices} we collect generator matrices for the mentioned feasible parameters from
Section~\ref{sec_feasible_parameters}.

\section{Preliminaries}
\label{sec_preliminaries}

In this article, $q\geq 2$ will always be the power of some prime $p$.

A linear $q$-ary code $C$ of length $n$ and dimension $k$ is called an $[n,k]_q$ code.
The number of positions which are not all-zero is called the \emph{effective length} of $C$.
If the length equals the effective length, $C$ is called \emph{full-length}.
Two positions $i,j\in \{1,\ldots,n\}$ of $C$ are called \emph{projectively equivalent} if there is a $\lambda\in\F_q^*$ with $c_i = \lambda c_j$ for all codewords $c\in C$.
The code $C$ is called \emph{projective} if it is full-length and there are no projectively equivalent positions.
For a general full-length code, the \emph{position multiplicity type} is the sequence $(m_i)$ where $m_i$ denotes the number of projective equivalence classes of size $i$.

\subsection{Restrictions on the weights}

If there is only a single non-zero weight, $C$ is called a \emph{constant weight code}.
The constant weight codes are completely classified.

\begin{lemma}[Bonisoli \cite{Bonisoli1984}]\label{lemma_1wt}
Let $C$ be a full-length $[n,k]_q$ code of constant weight $w$ of dimension $k\geq 1$.
Then $q^{k-1} \mid w$, and $C$ is isomorphic to the $u$-fold repetition of the $q$-ary simplex code $\Sim_q(k)$ of dimension $k$ with $u = w / q^{k-1}$.
In particular, $n = u \cdot \frac{q^k - 1}{q-1}$.
\end{lemma}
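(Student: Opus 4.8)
The plan is to exploit the geometric description of a full-length $[n,k]_q$ code as a multiset of points in $\PG(k-1,q)$ and to show that the constant-weight condition forces this multiset to be a constant multiple of the full point set. Let $G$ be a generator matrix of $C$, whose columns, read projectively, give a multiset $\mathcal{M}$ of $n$ points in $\PG(k-1,q)$ (full length means no column is zero). For a nonzero codeword $c = xG$ with $x \in \F_q^k \setminus \{0\}$, the weight of $c$ is $n$ minus the number of columns lying on the hyperplane $H_x = \{y : x \cdot y = 0\}$, counted with multiplicity in $\mathcal{M}$. Thus the constant-weight hypothesis says that every hyperplane of $\PG(k-1,q)$ meets $\mathcal{M}$ in exactly $n - w$ points (with multiplicity); write $m := n - w$ for this common intersection number.

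First I would run a standard counting (double counting pairs $(P,H)$ with $P \in \mathcal{M}$, $P \in H$, and then triples, or equivalently the first two "Delsarte-type" moment identities): summing over all $\frac{q^k-1}{q-1}$ hyperplanes gives $n \cdot \frac{q^{k-1}-1}{q-1} = m \cdot \frac{q^k-1}{q-1}$, which already pins down $n$ in terms of $m$, namely $n = m \cdot \frac{q^k-1}{q-1} \cdot \frac{q-1}{q^{k-1}-1}$... but more cleanly, the right normalization is to show $\mathcal{M}$ has constant hyperplane multiplicity iff every point multiplicity is the same. The key step is: a multiset in $\PG(k-1,q)$ all of whose hyperplane sections have the same size must be a constant $u$ times the full set of points (this is the "only the trivial design" statement; one proves it by induction on $k$, or by observing that the difference of $\mathcal{M}$ and $u$ copies of $\PG(k-1,q)$ is a signed multiset meeting every hyperplane in $0$ points, hence is zero because the incidence map is injective — the $p$-rank of the point-hyperplane incidence matrix of $\PG(k-1,q)$ over $\Q$ is full). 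Once $\mathcal{M}$ is $u$ copies of all of $\PG(k-1,q)$, we get $n = u \cdot \frac{q^k-1}{q-1}$, the code is the $u$-fold repetition of the simplex code $\Sim_q(k)$, and its constant weight is $w = u \cdot q^{k-1}$, so in particular $q^{k-1} \mid w$.

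I would organize the write-up so the one genuine ingredient is isolated as the claim that the only multiset in $\PG(k-1,q)$ with all hyperplanes of equal weight is a multiple of the whole space; everything else (translating weights to hyperplane intersections, deducing $n$ and $w$, recognizing the repeated simplex code) is bookkeeping. The main obstacle is proving that uniqueness claim cleanly: the slick argument is the non-singularity over $\Q$ (indeed over any field of characteristic $\ne$ the relevant primes, but here we only need over $\Q$) of the point–hyperplane incidence matrix of $\PG(k-1,q)$, which follows from the fact that $NN^{\top} = (r-\lambda)I + \lambda J$ for the projective-plane-type parameters $r = \frac{q^{k-1}-1}{q-1}$, $\lambda = \frac{q^{k-2}-1}{q-1}$, and $r-\lambda = q^{k-2} > 0$, so $NN^{\top}$ is positive definite and $N$ has full rank. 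Alternatively, and perhaps more in keeping with a coding-theoretic exposition, one can argue by induction on $k$: passing to a coordinate hyperplane (shortening/puncturing appropriately) reduces to the statement in $\PG(k-2,q)$, with the base case $k=1$ trivial. Either route is short; the rest of the proof is immediate.
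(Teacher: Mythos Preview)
The paper does not give a proof of this lemma; it is quoted from Bonisoli~\cite{Bonisoli1984} and used as a black box. So there is no ``paper's own proof'' to compare against. That said, your argument is essentially correct and is in fact close in spirit to Bonisoli's original proof, which also works through the geometric interpretation and the point--hyperplane incidence structure of $\PG(k-1,q)$.

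Two small points to tighten. First, in your double-counting paragraph you start to compute $n$ directly from $m = n-w$ and then abandon that line; it is cleaner to go straight to the incidence-matrix argument: form the rational signed multiset $\mathcal{M} - c\cdot\qbinom{V}{1}{q}$ with $c$ chosen so that every hyperplane meets it in multiplicity $0$, invoke the full $\Q$-rank of the point--hyperplane incidence matrix to conclude this signed multiset vanishes, and only then observe that $c$ must be a non-negative integer $u$ because the point multiplicities of $\mathcal{M}$ are. Second, your formula $NN^{\top} = (r-\lambda)I + \lambda J$ with $r-\lambda = q^{k-2}$ is only valid for $k \geq 2$; you correctly note that $k=1$ is a trivial base case, but make sure the write-up separates it explicitly rather than leaving it implicit in the induction remark.
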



If $C$ has exactly two different non-zero weights, $C$ is called a \emph{two-weight code}.

\begin{lemma}[Delsarte {\cite[Corollary 2]{delsarte1972weights}}]\label{lemma_delsarte}
  Let $C$ be a projective two-weight code over $\F_q$, where $q=p^e$ for some prime $p$.
  Then there exist suitable integers $u$ and $t$ with $u\ge 1$, $t\ge 0$ such that the weights are given by $w_1=up^t$ and $w_2=(u+1)p^t$.
\end{lemma}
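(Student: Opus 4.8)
The plan is to realise $C$ geometrically, read off a divisibility for $w_2-w_1$ from a double count of hyperplanes, and then promote it to the full statement using the first two moments of the weight distribution.

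First, by Lemma~\ref{lemma_1wt} a projective full-length code of constant weight is a repetition of a simplex code and in particular has a single nonzero weight, so any projective two-weight code has dimension $k\ge 2$. Realise the $n$ columns of a generator matrix of $C$ as a set $\mathcal{O}$ of $n$ pairwise distinct points of $\PG(k-1,q)$ (distinctness is projectivity). Every codeword comes from a linear functional $\varphi$, and its weight equals $n-\lvert\mathcal{O}\cap\ker\varphi\rvert$; hence the hypothesis that the nonzero weights are exactly $w_1<w_2$ says precisely that $\lvert\mathcal{O}\cap H\rvert$ takes exactly the two values $h_1:=n-w_1>h_2:=n-w_2\ge 0$ over the hyperplanes $H$, both being attained. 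Set $d:=w_2-w_1=h_1-h_2\ge 1$. Note $\emptyset\ne\mathcal{O}\subsetneq\PG(k-1,q)$, since $\mathcal{O}=\PG(k-1,q)$ would give the one-weight simplex code; so there is a point of $\mathcal{O}$ and a point outside $\mathcal{O}$.

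Next I would show $d\mid q^{k-2}$. (For $k=2$ the hyperplanes are single points, so $h_1=1$, $h_2=0$, $d=1$, and there is nothing to prove; assume $k\ge 3$.) Call a hyperplane \emph{large} if it meets $\mathcal{O}$ in $h_1$ points, and for a point $P$ let $a_P$ be the number of large hyperplanes through $P$. Counting the incidences $(Q,H)$ with $Q\in\mathcal{O}\setminus\{P\}$ and $P,Q\in H$ in two ways — summing over the $(q^{k-1}-1)/(q-1)$ hyperplanes through $P$, versus summing over $Q$ and using that a line lies on $(q^{k-2}-1)/(q-1)$ hyperplanes — yields a linear expression for $a_P\,d$ in terms of $n,h_2$ and these two counts, with an extra $-1$ per hyperplane when $P\in\mathcal{O}$. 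Subtracting the two identities, the $n$- and $h_2$-terms cancel and one is left with $(a_P-a_{P'})\,d=q^{k-2}$ for $P\in\mathcal{O}$, $P'\notin\mathcal{O}$. Since $a_P,a_{P'}\in\Z$, we get $d\mid q^{k-2}$; as $q$ is a power of $p$, necessarily $d=p^{t}$ for some $t\ge 0$.

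Finally, counting $\sum_{c\in C}\lvert c\rvert$ coordinate by coordinate gives $\sum_j jA_j=n(q-1)q^{k-1}$ (each of the $n$ coordinates is a nonzero linear functional, hence nonzero on $(q-1)q^{k-1}$ codewords), and $\sum_jA_j=q^k$. With $A_0=1$ these read $A_{w_1}+A_{w_2}=q^k-1$ and $w_1A_{w_1}+w_2A_{w_2}=n(q-1)q^{k-1}$, whence $A_{w_2}\cdot d=n(q-1)q^{k-1}-w_1(q^k-1)$. Thus $d\mid n(q-1)q^{k-1}-w_1(q^k-1)$; since $d=p^t\mid q^{k-2}\mid q^{k-1}$, we obtain $d\mid w_1(q^k-1)$, and as $\gcd(p^t,q^k-1)=1$ it follows that $d=p^t\mid w_1$. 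Writing $u:=w_1/p^{t}\ge 1$ gives $w_1=up^t$ and $w_2=w_1+d=(u+1)p^{t}$. The argument is essentially bookkeeping; the two points needing care are the exclusion of the degenerate constant-weight cases (so that both a large and a small hyperplane, and a point inside and a point outside $\mathcal{O}$, exist) and the coprimality $\gcd(p^t,q^k-1)=1$, which is exactly what turns the mild divisibility $d\mid q^{k-2}$ into divisibility of $w_1$ itself. Equivalently, one could argue via the strongly regular coset graph of $C^{\perp}$, whose two restricted eigenvalue multiplicities are $A_{w_1}$ and $A_{w_2}$ and hence must be non-negative integers.
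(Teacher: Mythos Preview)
Your argument is correct. The paper does not supply its own proof of this lemma; it is quoted as Delsarte's result \cite[Corollary~2]{delsarte1972weights} and used as a black box (notably inside Corollary~\ref{cor_n_divisible_by_4}). So there is nothing to compare against line by line.

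That said, what you have written is essentially the standard geometric proof: realise $C$ as a two-intersection set $\mathcal{O}$ in $\PG(k-1,q)$, double-count flags $(Q,H)$ with $Q\in\mathcal{O}$ and $H$ a hyperplane through a fixed point $P$ to obtain $(a_P-a_{P'})\,d=q^{k-2}$ for $P\in\mathcal{O}$, $P'\notin\mathcal{O}$, and then combine $d\mid q^{k-2}$ with the first two Pless moments to force $d\mid w_1$. Each step checks out, including the boundary cases you flag (existence of points inside and outside $\mathcal{O}$, the $k=2$ base case, and $\gcd(p^t,q^k-1)=1$). One cosmetic remark: your appeal to Lemma~\ref{lemma_1wt} to get $k\ge 2$ is fine but slightly roundabout; a one-dimensional code has at most one nonzero weight already by definition.
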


If $C$ has exactly three different non-zero weights, $C$ is called a \emph{three-weight code}.
Furthermore, $C$ is called \emph{$\Delta$-divisible} for some integer $\Delta \geq 1$ if all weights of $C$ are divisible by $\Delta$.

\begin{lemma}
	\label{lem:3wt_length_restriction}
	Let $C$ be a linear projective $[n,k]_q$ three-weight code.
	Then $n \leq \frac{q^k-1}{q-1} - 2$.
\end{lemma}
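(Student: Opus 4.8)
The plan is to translate the statement into the geometry of $\PG(k-1,q)$. Since $C$ is projective of dimension $k$, I would regard the columns of a generator matrix $G$ as a set $\mathcal{P}$ of $n$ points of $\PG(k-1,q)$: full length rules out zero columns, and projectivity rules out repeated points. This already gives $n\le\frac{q^k-1}{q-1}$, the number of points of $\PG(k-1,q)$, so what remains is to exclude the two largest values $n=\frac{q^k-1}{q-1}$ and $n=\frac{q^k-1}{q-1}-1$.

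To do that I would argue by contradiction: assume $n\ge\frac{q^k-1}{q-1}-1$ and put $\mathcal{Q}:=\PG(k-1,q)\setminus\mathcal{P}$, so that $|\mathcal{Q}|=\frac{q^k-1}{q-1}-n\in\{0,1\}$. For a nonzero message vector $x\in\F_q^k$, the codeword $xG$ vanishes in position $i$ precisely when the $i$-th point of $\mathcal{P}$ lies on the hyperplane $H_x=\{y\in\PG(k-1,q):\langle x,y\rangle=0\}$, hence
\begin{equation*}
  \operatorname{wt}(xG)=n-|\mathcal{P}\cap H_x|=n-\Bigl(\tfrac{q^{k-1}-1}{q-1}-|\mathcal{Q}\cap H_x|\Bigr)=n-\tfrac{q^{k-1}-1}{q-1}+|\mathcal{Q}\cap H_x|,
\end{equation*}
where I used that every hyperplane of $\PG(k-1,q)$ carries exactly $\tfrac{q^{k-1}-1}{q-1}$ points and that $\mathcal{P}$ and $\mathcal{Q}$ partition the point set. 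Since $|\mathcal{Q}\cap H_x|\le|\mathcal{Q}|\le1$, the right-hand side takes at most two distinct values as $x$ runs over $\F_q^k\setminus\{0\}$; thus $C$ has at most two distinct nonzero weights, contradicting the assumption that $C$ is a three-weight code. Therefore $n\le\frac{q^k-1}{q-1}-2$.

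I do not expect a genuine obstacle here: the argument rests only on the standard dictionary between projective $[n,k]_q$ codes and $n$-point sets in $\PG(k-1,q)$, the count $\tfrac{q^{k-1}-1}{q-1}$ for the points on a hyperplane, and the interpretation of a codeword's weight as the number of points off the associated hyperplane. A slightly longer alternative would be to invoke the classification of codes of maximal length — the only projective $[n,k]_q$ codes with $n\in\{\tfrac{q^k-1}{q-1},\tfrac{q^k-1}{q-1}-1\}$ are, up to equivalence, the simplex code $\Sim_q(k)$, which is constant-weight by Lemma~\ref{lemma_1wt}, and its one-point puncturings, which one checks directly to have two weights — but the counting above avoids any appeal to a classification and is self-contained.
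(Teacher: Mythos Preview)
Your proof is correct and follows essentially the same approach as the paper: both start from the geometric description of a projective code as a point set in $\PG(k-1,q)$ to get $n\le\frac{q^k-1}{q-1}$, and then rule out the two extremal values. The paper does the last step by identifying the codes explicitly as $\Sim_q(k)$ and its one-point puncturing (your ``slightly longer alternative''), whereas your complement-counting via $|\mathcal{Q}\cap H_x|\le|\mathcal{Q}|\le 1$ reaches the same conclusion without naming the codes; this is a cosmetic difference, not a genuinely different route.
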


\begin{proof}
	Let $G$ be a generator matrix of $C$.
	Since $C$ is projective, $G$ neither has a zero column, nor a pair of projectively equivalent columns.
	So each of the $\frac{q^k-1}{q-1}$ projective equivalence classes of non-zero vectors in $\F_q^k$ appears at most once as a column of $G$, showing that $n \leq \frac{q^k-1}{q-1}$.
	In the case $n = \frac{q^k-1}{q-1}$, $C$ is the simplex code $\Sim_q(k)$ of dimension $k$ over $\F_q$, which is a code of constant weight $q^{k-1}$.
	In the case $n = \frac{q^k-1}{q-1} - 1$, $C$ is the simplex code $\Sim_q(k)$ punctured in a single position, so $C$ has only the two weights $q^{k-1}$ and $q^{k-1}-1$.%
  \footnote{In fact, in this case $C$ is a MacDonald code.}
  This contradicts the assumption that $C$ is a three-weight code.
\end{proof}

\subsection{Weight enumerators and the MacWilliams identity}

The \emph{weight distribution} of $C$ is the sequence of numbers $(A_i)$ where $A_i$ denotes the number of codewords of weight $i$.
It can also be denoted as $(0^{A_0} 1^{A_1} 2^{A_2} \ldots)$, where entries with $A_i = 0$ may be omitted.
The weight distribution is often given in polynomial form as the \emph{(univariate) weight enumerator} $W_C(x) = \sum_i A_i x^i$ or the \emph{homogeneous weight enumerator} $W_C(x,y) = \sum_i A_i x^{n-i} y^i$.

The weight distribution of the dual code $C^\perp$ will be denoted by $(B_i)$.
We always have $A_0 = B_0 = 1$.
Furthermore, $B_0 = 0$ if and only if $C$ is full-length and $B_0 = B_1 = 0$ if and only $C$ is projective.
For the number $B_2$ of general full-length codes, the following statement can be checked.

\begin{lemma}
	\label{lem:col_mult_B2}
	Let $C$ be a full-length $q$-ary linear code of length $n$ and $(m_i)$ the position multiplicity type of $C$.
	Then
	\begin{align*}
		\sum_i i m_i & = n \quad\text{and} \\
		\sum_i (q-1) \binom{i}{2} m_i & = B_2
	\end{align*}
\end{lemma}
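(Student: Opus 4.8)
The plan is to compute $B_2$, the number of weight-$2$ codewords in $C^\perp$, directly in terms of the column structure of a generator matrix $G$ of $C$. Since $C$ is full-length, every column of $G$ is a nonzero vector of $\F_q^k$, and a codeword of $C^\perp$ corresponds to a linear dependence among the columns of $G$, viewed as a function assigning to each position a scalar so that the weighted column sum vanishes. A codeword of $C^\perp$ of weight exactly $2$ is therefore supported on a pair of distinct positions $\{i,j\}$ carrying scalars $\lambda_i,\lambda_j\in\F_q^*$ with $\lambda_i g_i + \lambda_j g_j = 0$, i.e.\ the columns $g_i$ and $g_j$ are projectively equivalent.

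First I would reformulate the count position-wise. Fix a projective equivalence class $\mathcal{P}$ of positions of size $i$; that is, $\mathcal{P}$ consists of $i$ positions whose columns all lie on a common projective point. Any two distinct positions within $\mathcal{P}$ give rise to weight-$2$ dual codewords: once the pair $\{a,b\}\subseteq\mathcal{P}$ is fixed, we may choose $\lambda_a\in\F_q^*$ freely ($q-1$ choices), and then $\lambda_b$ is uniquely determined by the projective relation between $g_a$ and $g_b$. Positions in different classes never contribute, since their columns are projectively inequivalent. Hence each unordered pair inside a class of size $i$ yields exactly $q-1$ distinct weight-$2$ codewords, and summing over all $m_i$ classes of each size $i$ gives
\begin{equation*}
  B_2 \;=\; \sum_i (q-1)\binom{i}{2} m_i .
\end{equation*}
The first identity $\sum_i i\,m_i = n$ is immediate: the projective equivalence classes partition the $n$ positions of $C$, and a class of size $i$ contributes $i$ positions, so summing the sizes recovers the length.

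The only point requiring a little care — and the closest thing to an obstacle — is checking that the weight-$2$ dual codewords produced from distinct pairs $\{a,b\}$ (within the same class or across classes) are genuinely distinct, and that no weight-$2$ codeword is missed. Distinctness holds because a weight-$2$ codeword determines its support $\{a,b\}$ and its nonzero values $(\lambda_a,\lambda_b)$; conversely no codeword is missed because a weight-$2$ dual codeword forces exactly the projective-dependence relation $\lambda_a g_a = -\lambda_b g_b$, placing $a$ and $b$ in the same class. With that bookkeeping in place the formula follows.
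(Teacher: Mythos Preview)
Your proof is correct; it is precisely the direct verification the paper has in mind when it says ``the following statement can be checked'' without giving further details. The partition argument for $\sum_i i\,m_i = n$ and the bijection between weight-$2$ dual codewords and pairs $(\{a,b\},\lambda_a)$ with $a,b$ projectively equivalent and $\lambda_a\in\F_q^*$ are exactly the intended reasoning.
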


The weight distributions of $C$ and $C^\perp$ are related via the \emph{MacWilliams identities} \cite{MacWilliams-1963-BSTJ42[1]:79-94}
\begin{equation}
  \sum_{j=0}^{n-\nu} {{n-j}\choose\nu} A_j=q^{k-\nu}\cdot \sum_{j=0}^\nu {{n-j}\choose{n-\nu}}B_j\quad\text{for }0\le\nu\le n\text{.}
\end{equation}
or in homogeneous polynomial form as
\[
	W_{C^\perp}(x,y) = \frac{1}{\#C} \cdot W_C(x + (q-1) y, x - y)\text{.}
\]
In fact, the $B_i$ are uniquely determined by the $A_i$, as can be seen by the following variant of the MacWilliams identities.
Based on the \emph{$i$-th $q$-ary Krawtchouk polynomial}
\[
K_i(x) = \sum_{\nu=0}^i (-1)^\nu \binom{x}{\nu} \binom{n-x}{i-\nu} (q-1)^{i-\nu}\text{,}
\]
we have
\[
	B_i = \frac{1}{\#C} \cdot \sum_{j = 0}^n K_i(j) A_j\text{.}
\]
For a binary projective $[n,k]_2$ code, the system of the four equations with $i\in\{0,1,2,3\}$ can be rewritten to
\begin{eqnarray}
  \sum_{i>0} A_i &=& 2^k-1,\label{eq_ppm1proj}\\
  \sum_{i\ge 0} iA_i &=& 2^{k-1}n,\label{eq_ppm2proj}\\
  \sum_{i\ge 0} i^2A_i &=& 2^{k-2}\cdot n(n+1),\label{eq_ppm3proj}\\
  \sum_{i\ge 0} i^3A_i &=& 2^{k-3}\cdot (n^2(n+3)-6B_3)\label{eq_ppm4proj}.
\end{eqnarray}
In this special form of the left hand side, they are also called the first four \emph{(Pless) power moments}, see \cite{pless1963power}.
Given the length $n$, the dimension $k$, and the weights $w_1,w_2,w_3$ of a projective three-weight code, we can compute $A_{w_i}$ and
$B_3$:
\begin{eqnarray}
  A_{w_1} &=& \frac{2^{k-2}\cdot\left(n^2 - 2nw_2 - 2nw_3 + 4w_2w_3 + n\right) - w_2w_3}{(w_2-w_1)(w_3-w_1)} \label{eq_A1}\\
  A_{w_2} &=& \frac{2^{k-2}\cdot\left(n^2 - 2nw_1 - 2nw_3 + 4w_1w_3 + n\right) - w_1w_3}{(w_2-w_3)(w_2-w_1)} \label{eq_A2}\\
  A_{w_3} &=& \frac{2^{k-2}\cdot\left(n^2 - 2nw_1 - 2nw_2 + 4w_1w_2 + n\right) - w_1w_2}{(w_3-w_1)(w_3-w_2)} \label{eq_A3}\\
  3B_3 &=& \frac{n^2(n+3)}{2}-\left(w_1+w_2+w_3\right)n(n+1)  \nonumber\\
  && +2\left(w_1w_2+w_1w_3+w_2w_3\right)n-4w_1w_2w_3+w_1w_2w_3\cdot 2^{2-k}\label{eq_B3}
\end{eqnarray}
All $A_j$ except $A_0=1$ and $A_{w_1}$, $A_{w_2}$, $A_{w_3}$ are equal to zero, so that the $B_i$ with $i\ge 4$ are be uniquely determined using the remaining MacWilliams identities, i.e., those for $\nu\ge 4$.
Note that (\ref{eq_B3}) implies that the product $w_1w_2w_3$ has to be divisible by $2^{k-2}$.
We remark that we will obtain stronger divisibility conditions in Section~\ref{sec_divisibility}.
Of course, similar explicit expressions can also be determined for field sizes $q>2$.
However, we will mostly restrict our theoretical considerations to $q=2$ in the remaining part of the paper.

For a linear $[n_1,k_1]_q$ code $C_1$ and a linear $[n_2,k_2]_q$ code $C_2$, the \emph{direct sum} of $C_1$ and $C_2$ is defined as
\[
	C_1 \oplus C_2 = \{ (c_1, c_2) \mid c_1\in C_1, c_2\in C_2\}\text{.}
\]
It is a linear $[n_1 + n_2, k_1 + k_2]_q$ code.
Its weight enumerator is given by
\[
	W_{C_1 \oplus C_2}(x) = W_{C_1}(x) \cdot W_{C_2}(x)\text{.}
\]

\subsection{The coset graph triple sum sets}

A \emph{coset} of a linear code $C$ is any translate of $C$ by a constant vector. A \emph{coset leader} of a fixed coset is any element
that minimizes the weight. The \emph{weight} of a coset is the weight of any of its coset leaders. With this, the
\emph{coset graph} $\Gamma_C$ of a linear code $C$ is defined on the cosets of $C$ as vertices, where two cosets are connected
iff they differ by a coset of weight one. To ease notation, we speak of the eigenvalues of a graph $\Gamma$ meaning the
eigenvalues of the corresponding adjacency matrix. For a projective code $C$ the eigenvalues of the coset graph $\Gamma_{C^\perp}$
of its dual code are completely determined by the occurring non-zero weights $w_i$ of $C$, see \cite[Theorem 1.11.1]{brouwer1989distance}:
\begin{theorem}
\label{thm_eigenvalue_formula}
Let $C$ be a projective $[n,k]_q$ code with distinct weights $w_0=0,w_1$, $\dots$, $w_r$. Then, the coset graph $\Gamma_{C^\perp}$
of its dual code $C^\perp$ is $n(q-1)$-regular and the eigenvalues are given by $n(q-1)-qw_i$ for $i\in\{0,\ldots,r\}$.
\end{theorem}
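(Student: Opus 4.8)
\emph{Proof proposal.} The plan is to realize $\Gamma_{C^\perp}$ as a Cayley graph on the additive group of $\F_q^k$ and then diagonalize it using additive characters. First I would fix a generator matrix $G$ of $C$ and let $g_1,\dots,g_n\in\F_q^k$ be its columns; since $C$ is projective, these columns are nonzero (full-length) and pairwise non-proportional (no projectively equivalent positions). The syndrome map $v\mapsto Gv^{\top}$ induces an isomorphism from the coset group $\F_q^n/C^\perp$ onto $\F_q^k$, under which a coset has weight one exactly when its image lies in $D:=\{\lambda g_i:\lambda\in\F_q^*,\ 1\le i\le n\}$. Projectivity makes the $n(q-1)$ products $\lambda g_i$ pairwise distinct, so $\#D=n(q-1)$; moreover $0\notin D$ and $D=-D$ because $-1\in\F_q^*$. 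Hence $\Gamma_{C^\perp}$ is isomorphic to the Cayley graph of $\F_q^k$ with connection set $D$, which is a loopless $n(q-1)$-regular graph; this already settles the regularity assertion.

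Next I would compute the spectrum. Fix a nontrivial additive character $\psi$ of $\F_q$ and write $\langle a,x\rangle=\sum_j a_j x_j$. The characters of $\F_q^k$ are the maps $\chi_a\colon x\mapsto\psi(\langle a,x\rangle)$ for $a\in\F_q^k$, and they form an orthogonal eigenbasis of the adjacency operator; the eigenvalue attached to $\chi_a$ is $\sum_{d\in D}\chi_a(d)=\sum_{i=1}^n\sum_{\lambda\in\F_q^*}\psi(\lambda\langle a,g_i\rangle)$. Using the standard evaluation $\sum_{\lambda\in\F_q^*}\psi(\lambda t)=q-1$ if $t=0$ and $-1$ otherwise, the inner sums collapse and this eigenvalue equals $q\cdot z(a)-n$, where $z(a):=\#\{i:\langle a,g_i\rangle=0\}$ is the number of zero coordinates of the codeword $aG\in C$.

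Finally I would translate this into weights: if $w$ denotes the weight of $aG$, then $z(a)=n-w$, so the eigenvalue belonging to $\chi_a$ is $q(n-w)-n=n(q-1)-qw$. As $a$ runs over $\F_q^k$ the codeword $aG$ runs over all of $C$ (the map $a\mapsto aG$ is injective since $G$ has rank $k$), and its weight takes exactly the values $w_0=0,w_1,\dots,w_r$; the choice $a=0$ recovers the degree $n(q-1)=n(q-1)-qw_0$. This yields precisely the claimed list of eigenvalues, and as a byproduct the multiplicity of $n(q-1)-qw_i$ comes out to be $A_{w_i}$ because distinct weights give distinct eigenvalues. The argument is essentially bookkeeping; the only step that needs genuine care is establishing the clean bijection between weight-one cosets and the set $D$ — this is exactly where projectivity enters — together with the verification that the resulting Cayley graph is simple and loopless.
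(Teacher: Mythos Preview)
Your argument is correct and is essentially the standard proof: identify $\Gamma_{C^\perp}$ with the Cayley graph on $(\F_q^k,+)$ with connection set $D=\{\lambda g_i\}$, then diagonalize via additive characters to obtain eigenvalues $n(q-1)-q\,w(aG)$. The only point worth tightening is the claim that weight-one cosets correspond bijectively to $D$: you should also remark that no nonzero element of $D$ can be a syndrome of the zero vector (i.e.\ no $\lambda e_i$ lies in $C^\perp$), which follows from $g_i\neq 0$; you allude to this under ``loopless'' but it deserves one explicit sentence.

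As for comparison: the paper does not prove this theorem at all. It is quoted as \cite[Theorem~1.11.1]{brouwer1989distance} and used as a black box. Your Cayley-graph-plus-characters argument is exactly the classical route to that result, so there is no methodological divergence to discuss.
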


\emph{Triple sum sets} (TSS) have been introduced in \cite{courteau1984triple} as generalization of partial difference sets.
A set
$\Omega\subseteq \mathbb{F}_q^k$ is called a \emph{triple sum set} if it is closed under scalar multiplication and there are constants
$\sigma_0$ and $\sigma_1$ such that each non-zero $h\in \mathbb{F}_q^k$ can be written as $h=x+y+z$ with $x,y,z\in \Omega$
exactly $\sigma_0$ times if $h\in\Omega$ and $\sigma_1$ times if $h\in\mathbb{F}_q^k\backslash\Omega$.

If $\Omega\subseteq\mathbb{F}_q^k$ and $0\notin\Omega$, then we denote by $C(\Omega)$ the projective code of length
$n=\#\Omega/(q-1)$ obtained as the kernel of the $k\times n$ matrix $H$ whose columns are the projectively non-equivalent elements
of $\Omega$. Thus, $H$ is the parity check matrix of the linear code $C(\Omega)$. In order to ease the notation, we abbreviate
$\Gamma_{C(\Omega)}$ as $\Gamma(\Omega)$. In \cite[Theorem 2]{shi2019three} it was shown that $\Omega$ is a TSS if and only if
$\Gamma(\Omega)$ is a $3$-SWRG. (Actually, \cite[Theorem 2]{shi2019three} states the equivalence of $\Gamma(\Omega)$ being an
$s$-SWRG and $\Omega$ being an $s$-sum set, where the element $h$ in the definition of a TSS is a sum of $s$ elements from $\Omega$.)

\begin{lemma}\label{lem:reltheta}
Let $s\geq 2$ be an integer.
The following equation holds for all  $\theta_1$, $\theta_2$, $\theta_3$ over any commutative ring:
\begin{equation} \label{E:reltheta}
  (\theta_2-\theta_3)\theta_1^{\SWRGparam}+(\theta_3-\theta_1)\theta_2^{\SWRGparam}+(\theta_1-\theta_2)\theta_3^{\SWRGparam}
  =(\theta_1-\theta_2)(\theta_1-\theta_3)(\theta_2-\theta_3)\cdot\!\!\!\!\sum_{h+i+j=\SWRGparam-2} \theta_1^h\theta_2^i\theta_3^j\,.
\end{equation}
\end{lemma}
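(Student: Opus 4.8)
The plan is to prove the identity by recognizing both sides as (essentially) antisymmetric polynomials and dividing by the Vandermonde factor. First I would observe that the left-hand side
\[
F(\theta_1,\theta_2,\theta_3) = (\theta_2-\theta_3)\theta_1^{s} + (\theta_3-\theta_1)\theta_2^{s} + (\theta_1-\theta_2)\theta_3^{s}
\]
is exactly the $3\times 3$ determinant
\[
F = \det\begin{pmatrix} 1 & 1 & 1 \\ \theta_1 & \theta_2 & \theta_3 \\ \theta_1^{s} & \theta_2^{s} & \theta_3^{s} \end{pmatrix},
\]
expanded along the last row (up to the standard sign pattern, which one checks directly). Hence $F$ is alternating in $\theta_1,\theta_2,\theta_3$: swapping any two variables negates it. Over $\Z[\theta_1,\theta_2,\theta_3]$, an alternating polynomial is divisible by the Vandermonde polynomial $V = (\theta_1-\theta_2)(\theta_1-\theta_3)(\theta_2-\theta_3)$, so $F = V \cdot G$ for a unique symmetric polynomial $G \in \Z[\theta_1,\theta_2,\theta_3]$, and since $\Z$ embeds in any commutative ring it suffices to prove the identity there.

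Next I would identify $G$ as the complete homogeneous symmetric polynomial $h_{s-2}(\theta_1,\theta_2,\theta_3) = \sum_{h+i+j = s-2} \theta_1^h \theta_2^i \theta_3^j$. The cleanest route is the bialternant (Jacobi--Trudi / Schur polynomial) formula: the Schur polynomial $s_\lambda$ in three variables for $\lambda = (s-2, 0, 0)$ equals $h_{s-2}$, and by definition
\[
s_{(s-2,0,0)} = \frac{\det\!\big(\theta_j^{\lambda_i + 3 - i}\big)}{\det\!\big(\theta_j^{3-i}\big)} = \frac{\det\begin{pmatrix}\theta_1^{s} & \theta_2^{s} & \theta_3^{s}\\ \theta_1 & \theta_2 & \theta_3 \\ 1 & 1 & 1\end{pmatrix}}{V},
\]
with the exponents $(s-2)+2,\ 0+1,\ 0+0 = s,1,0$ in the numerator. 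Reordering the rows of the numerator to match $F$ contributes exactly the sign needed so that $F = V\cdot h_{s-2}$. Alternatively, one can avoid citing Schur polynomials and argue directly: divide $F$ by $V$ by polynomial long division (or set $\theta_3 = 0$ and induct, using the telescoping identity $\sum_{h+i+j=s-2}\theta_1^h\theta_2^i\theta_3^j = \theta_3\sum_{h+i+j=s-3}\theta_1^h\theta_2^i\theta_3^j + h_{s-2}(\theta_1,\theta_2)$), checking that the quotient has the claimed form; a degree count ($\deg F = s$, $\deg V = 3$, so $\deg G = s-2$) together with symmetry pins down $G$ up to knowing its value on $\theta_3=0$.

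The only mild obstacle is bookkeeping the signs: one must be careful that the row ordering in the determinant presentation of $F$ versus the standard bialternant convention is tracked correctly, since an even permutation of rows is needed and a stray sign would break the identity. Everything else is routine: the divisibility of alternating polynomials by the Vandermonde is standard over $\Z$, and the identification of the quotient with $h_{s-2}$ follows either from the Schur polynomial definition or from the short induction on $s$ (base case $s=2$ gives $F = V$ and $h_0 = 1$). Since all manipulations take place in $\Z[\theta_1,\theta_2,\theta_3]$, the identity then holds after applying the unique ring homomorphism $\Z \to R$ for any commutative ring $R$ and substituting arbitrary elements for the $\theta_i$.
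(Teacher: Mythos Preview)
The paper states this lemma without proof, treating it as an elementary polynomial identity; there is nothing to compare against on the paper's side.

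Your argument is correct. Recognising the left-hand side as (a sign times) the generalized Vandermonde determinant with exponent rows $0,1,s$ and invoking the bialternant formula $s_{(s-2,0,0)} = h_{s-2}$ is the standard and cleanest way to establish the identity, and your observation that it suffices to work in $\Z[\theta_1,\theta_2,\theta_3]$ handles the ``any commutative ring'' clause correctly. One small point: the determinant as you wrote it (rows $1,\theta_i,\theta_i^s$) expands along the last row to $-F$, not $F$, and the Vandermonde in the same row ordering is $-V$, so the quotient $F/V$ comes out right---but since you flagged the sign bookkeeping as the one delicate step, it would be worth actually carrying it through once rather than leaving it as ``one checks directly''. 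The alternative inductive route you sketch (reducing to $\theta_3=0$ and using $h_{s-2}(\theta_1,\theta_2,\theta_3) = \theta_3\,h_{s-3}(\theta_1,\theta_2,\theta_3) + h_{s-2}(\theta_1,\theta_2)$) also works and is perhaps more self-contained for readers unfamiliar with Schur polynomials.
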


A coding-theoretic characterization of triple sum sets is given as follows, see \cite[Theorem 2.1]{courteau1984triple} or
\cite[Theorem 5]{shi2019three}.
\begin{theorem}
  If $\Omega\subseteq\mathbb{F}_q^k$ so that $C(\Omega)^\perp$ has length $n$ and attains exactly three non-zero weights
  $w_1$, $w_2$, and $w_3$, then $\Omega$ is a TSS iff $w_1+w_2+w_3=\frac{3n(q-1)}{q}$.
\end{theorem}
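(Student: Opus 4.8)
The plan is to combine three ingredients already available: the eigenvalue formula for coset graphs of duals of projective codes (Theorem~\ref{thm_eigenvalue_formula}), the equivalence ``$\Omega$ is a TSS $\iff$ $\Gamma(\Omega)$ is a $3$-SWRG'' from \cite[Theorem~2]{shi2019three}, and the $\SWRGparam=3$ instance of the van Dam--Omidi characterization (Lemma~\ref{lem:vandam_omidi}) together with the algebraic identity of Lemma~\ref{lem:reltheta}.

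First I would set up the spectrum of $\Gamma(\Omega)$. By construction the columns of the parity-check matrix $H$ of $C(\Omega)$ are the pairwise projectively inequivalent nonzero elements of $\Omega$, so $H$ has no zero column and no two columns are projectively equivalent; hence $C := C(\Omega)^\perp$, which is generated by $H$, is a full-length projective code of length $n$, and by hypothesis it has exactly the three nonzero weights, say $w_1 < w_2 < w_3$. Applying Theorem~\ref{thm_eigenvalue_formula} to this $C$ shows that $\Gamma_{C^\perp} = \Gamma(\Omega)$ is $n(q-1)$-regular with exactly the four distinct eigenvalues $n(q-1) > \theta_1 > \theta_2 > \theta_3$, where $\theta_i := n(q-1) - q w_i$ for $i=1,2,3$.

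Now $\Gamma(\Omega)$ is a regular graph with exactly four distinct eigenvalues, so Lemma~\ref{lem:vandam_omidi} with $\SWRGparam = 3$ applies: $\Gamma(\Omega)$ is a $3$-SWRG if and only if $(\theta_2-\theta_3)\theta_1^{3}+(\theta_3-\theta_1)\theta_2^{3}+(\theta_1-\theta_2)\theta_3^{3}=0$. By Lemma~\ref{lem:reltheta} with $\SWRGparam=3$ (where the sum on the right consists of the three terms $\theta_1,\theta_2,\theta_3$), this left-hand side equals $(\theta_1-\theta_2)(\theta_1-\theta_3)(\theta_2-\theta_3)\,(\theta_1+\theta_2+\theta_3)$; since the $\theta_i$ are pairwise distinct, the vanishing of this product is equivalent to $\theta_1+\theta_2+\theta_3=0$. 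Substituting $\theta_i = n(q-1)-q w_i$ gives $\theta_1+\theta_2+\theta_3 = 3n(q-1) - q(w_1+w_2+w_3)$, so the condition becomes $w_1+w_2+w_3 = 3n(q-1)/q$. Finally, invoking \cite[Theorem~2]{shi2019three} to replace ``$\Gamma(\Omega)$ is a $3$-SWRG'' by ``$\Omega$ is a TSS'' yields the claimed equivalence.

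There is no genuinely hard step; the only places that need a word of justification are (i) that $C(\Omega)^\perp$ really is projective, so that Theorem~\ref{thm_eigenvalue_formula} is applicable, and (ii) that having exactly three nonzero weights forces $\Gamma(\Omega)$ to have exactly four distinct eigenvalues, and in particular pairwise distinct $\theta_i$ --- the latter being exactly what makes the factorisation of Lemma~\ref{lem:reltheta} useful. Both are immediate from the definitions. The one point of potential confusion is bookkeeping: it is $C(\Omega)^\perp$, not $C(\Omega)$, that plays the role of ``$C$'' in Theorem~\ref{thm_eigenvalue_formula}, since it is the weights of $C(\Omega)^\perp$ that govern the eigenvalues of $\Gamma_{C(\Omega)}$.
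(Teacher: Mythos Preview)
Your proof is correct and follows essentially the same route as the paper's: factor the $\SWRGparam=3$ instance of \eqref{eq_diophantine_vandam} via Lemma~\ref{lem:reltheta}, use Theorem~\ref{thm_eigenvalue_formula} to see the $\theta_i$ are pairwise distinct so the condition collapses to $\theta_1+\theta_2+\theta_3=0$, and then substitute $\theta_i=n(q-1)-qw_i$. The only difference is cosmetic: you spell out explicitly the two bridging steps (that $\Omega$ is a TSS iff $\Gamma(\Omega)$ is a $3$-SWRG via \cite[Theorem~2]{shi2019three}, and that the $3$-SWRG property is equivalent to \eqref{eq_diophantine_vandam} via Lemma~\ref{lem:vandam_omidi}), whereas the paper leaves these implicit from the surrounding discussion.
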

\begin{proof}
%
Using Equation~(\ref{E:reltheta}) from Lemma~\ref{lem:reltheta} for $\SWRGparam=3$, (\ref{eq_diophantine_vandam}) becomes
\[
(\theta_1-\theta_2)(\theta_1-\theta_3)(\theta_2-\theta_3)(\theta_1 + \theta_2 + \theta_3) = 0\,.
\]
Theorem~\ref{thm_eigenvalue_formula} shows that the eigenvalues are pairwise different and
we conclude that (\ref{eq_diophantine_vandam})
is satisfied iff $\theta_1+\theta_2+\theta_3=0$.
Plugging in the
formula for the eigenvalues from Theorem~\ref{thm_eigenvalue_formula} gives the condition $w_1+w_2+w_3=\frac{3n(q-1)}{q}$.
\end{proof}
As mentioned in the introduction, all known examples for $\SWRGparam$-SWRGs satisfy $\theta_2=0$ and $\theta_3=-\theta_1$, i.e., they
are $s$-SWRGs for all odd $\SWRGparam\ge 3$. So, starting from projective three-weight codes to construct SWRGs it is
sufficient to study those that satisfy the weight constraint $w_1+w_2+w_3=3n(q-1)/q$.
We do so in Section~\ref{sec_feasible_parameters}.
We would like to point out that there are (many) binary projective three-weight codes with $w_1 + w_2 + w_3 \neq 3n(q-1)/q$.
As an example, consider the binary $[6,5]_2$ parity check code.
It is a projective three-weight-code with weight distribution is $(0^1 2^{15} 4^{15} 6^1)$.
The sum of its weights is $12$, but $\frac{3}{2} n = 9$.

\subsection{The geometric point of view to linear codes}
For a vector space $V$, let $\qbinom{V}{k}{q}$ be the set of all subspaces of dimension $k$.
If $V$ is of finite dimension $v$, $\#\qbinom{V}{k}{q}$ equals the Gaussian binomial coefficient $\qbinom{v}{k}{q}$.
Conveniently, we will identify a vector space $V$ with the set $\qbinom{V}{1}{q}$ of \emph{points} contained in $V$.

A \emph{multiset} on a base set $S$ is a mapping $M : S \to \mathbb{Z}_{\geq 0}$, assigning a multiplicity to each element in $S$.
For $T \subseteq S$, we set $M(T) = \sum_{s\in T}M(s)$.
The \emph{cardinality} of $M$ is $\#M = M(S)$, which is the sum of the multiplicities of all elements.
In enumerative form, a multiset may be written by statements of the form $M = \{\!\!\{ s_1, s_2, \ldots, s_n\}\!\!\}$, with the obvious interpretation.
If $N$ is a multiset on a base set $T$ and $\phi$ is a predicate on $T$, we may also use the multiset-builder notations like $M = \{\!\!\{t\in N \mid \phi(t)\}\!\!\}$.

We will make use of the geometric description of linear codes as in \cite{Dodunekov-Simonis-1998-ElecJComb5:R37}.
There is a bijective correspondence of (semi-)linear equivalence classes of linear full-length $[n,k]_q$-codes $C$ and (semi-)linear equivalence classes of spanning multisets $\mathcal{C}$ of $n$ points in $\PG(V) \cong \PG(k-1,q)$, where $V$ is a $\F_q$-vector space of dimension $k$.
For a concrete assignment, let $G$ be a generator matrix of $C$ and $v_1,\ldots,v_n$ the columns of $G$ and consider the multiset $\mathcal{C} = \{\!\!\{\langle v_1\rangle,\ldots,\langle v_n\rangle\}\!\!\}$ of points in $V = \F_q^k$.
In this way, a codeword $c = xG$ is represented by the hyperplane $H = x^\perp$ (in fact, $H$ represents all the $q-1$ codewords which are projectively equivalent to $c$).
The weight of $c$ has a natural geometric description, namely $w(c) = n - \mathcal{C}(H)$, where the hyperplane $H$ is identified with the set of points contained in $H$.
In other words, $w(c)$ is the number of points in $\mathcal{C}$, counted with multiplicity, which are not contained in $H$.
The code $C$ is projective if and only if $\mathcal{C}$ is a proper set.

The following codes have an easy geometric description:
\begin{enumerate}[(i)]
	\item The $q$-ary simplex code $\Sim_q(k)$ of dimension $k\geq 1$ corresponds to the set of all points contained in a vector space of (algebraic) dimension $k$.
	It is a projective linear $[(q^k-1)/(q-1), k]_q$ constant code weight code with weight enumerator $W_{\Sim_q(k)}(x) = 1 + (q^k-1)x^{q^{k-1}}$.
	\item The $q$-ary first order Reed-Muller code $\RM_q(k)$ of dimension $k\geq 2$ corresponds to an affine subspace of dimension $k-1$, that is the set of points contained in $A = V \setminus W$, where $V$ is an $\F_q$-vector space of dimension $k$ and $W$ is a subspace of codimension $1$.
	It is a projective linear $[q^{k-1}, k]_q$ two-weight code with weight enumerator $W_{\RM_q(k)}(x) = 1 + (q^k-q)x^{(q-1)q^{k-2}} + (q-1) x^{q^{k-1}}$.
	In the geometric description, the space $W$ is known as the \emph{hyperplane at infinity} of $A$.
	It corresponds to the $q-1$ codewords of weight $q^{k-1}$.
\end{enumerate}

For a fixed point $P$ in $V$, we consider the standard projection $\pi_P : V \to V/P, x \mapsto x + P$.
It is extended to the multiset $\mathcal{C}$ of points as
\[
	\pi_P(\mathcal{C}) = \{\!\!\{ \pi_P(Q) \mid Q\in \mathcal{C} \text{ with }Q \neq P\}\!\!\}\text{.}
\]
We have $\#\pi_P(\mathcal{C}) = \#\mathcal{C} - \mathcal{C}(P)$.

The projections $\pi_P(\mathcal{C})$ with $P\in\qbinom{V}{1}{q}$ correspond to the subcodes $C'$ of $C$ of codimension one.
A codeword $c\in C$ with corresponding hyperplane $H < V$ is contained in $C'$ if and only if $P \in H$.

Let $C$ be a projective $[n,k]_q$-code with weight enumerator $W_C(x) = \sum_i A_i x^i$ and $\mathcal{C}$ a corresponding point set.
If the complement $\mathcal{C}^\complement  = \qbinom{\F_q^k}{1}{q} \setminus \mathcal{C}$ is spanning (or equivalently, $A_{q^{k-1}} = 0$), we call its corresponding $[\frac{q^k-1}{q-1}-n,k]_q$-code $C^\complement$ the \emph{anticode} of $C$.
In this way, the anticode of $C$ is defined up to isomorphism.
Its weight enumerator is $W_{C^\complement}(x) = 1 + \sum_{i > 0} A_{q^{k-1} - i} x^i$.

\section{Feasible parameters of projective three-weight codes satisfying $\mathbf{w_1+w_2+w_3=3n(q-1)/q}$}
\label{sec_feasible_parameters}
As outlined in Section~\ref{sec_preliminaries} we can construct $3$-SWRGs from projective $[n,k]_q$ three-weight codes
if the weights satisfy $w_1+w_2+w_3=3n(q-1)/q$.
So, here we study the feasible sets of parameters $n,k,w_1,w_2,w_3$ such that a corresponding projective three-weight
code exists. In Subsection~\ref{subsec_feasible_q_2} we consider the admissible parameters for all lengths $n<72$ in the binary case and in
Subsection~\ref{subsec_feasible_q_3} we consider the admissible parameters for all lengths $n\le 39$ in the ternary case.

In that range we can simply loop over all weight-triples $(w_1, w_2, w_3)$ with $1 \leq w_1 < w_2 < w_3 \leq n$ satisfying $w_1+w_2+w_3=3n(q-1)/q$.
For $q=2$, (\ref{eq_B3}) implies that the product $w_1w_2w_3$ is divisible by $2^{k-2}$,
which restricts the possible choices for the dimension $k$.
For $q=3$ we may use the trivial bounds $1\le k\le n$.
Then, the MacWilliams identities uniquely determine the values of all $A_i$s and $B_i$s.
As a first check we test if all of these values are non-negative integers.
As a consequence of \cite[Theorem 1]{ward1981divisible}, any full-length $\Delta$-divisible $[n,k]_q$ code is the $\Delta/\gcd(\Delta,q^{k-1})$-fold repetition of some code.
As projectivity forbids proper repetitions, we can restrict ourselves to the cases where $\gcd(\Delta,q^{k-1}) = \gcd(w_1, w_2, w_3, q)$ is a power of $p$.
Examples where we can apply this criterion to exclude the existence of codes are $q=2$, $n=36$,
$(w_1,w_2,w_3)=(12,18,24)$, and $k\in\{6,7,8\}$.
The corresponding values of $(A_{w_1}, A_{w_2},A_{w_3})$ are $(2,56,5)$, $(10,104,13)$, and $(26,200,29)$.
For $q=3$, this criterion can be applied to the parameters $n=24$, $k=4$ and weight triple $w=(14,16,18)$ as well as $n=36$, $k\in\{5,6\}$ and
weight triple $w=(18, 24, 30)$. In order to find examples, we have used the software package \texttt{LinCode} \cite{bouyukliev2021computer}
to enumerate matching codes or tried to reduce the problem complexity by prescribing automorphisms and applying exact or heuristic
solvers for the resulting integer linear programs.

Summarizing the above, we call parameters $(q, n, k, w_1, w_2, w_3)$ \emph{admissible}
if
\begin{enumerate}[(i)]
\item $1\le w_1<w_2<w_3\le n$ and $w_1+w_2+w_3=3n(q-1)/q$ and
\item $\gcd(w_1,w_2,w_3,q)$ is a power of $p$ and
\item $w_1w_2w_3$ is divisible by $2^{k-2}$ (if $q=2$) or $1\le k\le n$ (if $q=3$) and
\item all $A_i$ and $B_i$ with $i\in\{0,\ldots,n\}$ are non-negative integers and
\item $B_1 = B_2 = 0$.
\end{enumerate}

\subsection{Feasible parameters for projective binary three-weight codes with $\mathbf{w_1+w_2+w_3=3n/2}$}
\label{subsec_feasible_q_2}

In Table~\ref{table:q2} we list the admissible parameters for projective binary three-weight codes with $w_1+w_2+w_3=3n/2$.
For each length $4\leq n<72$ we list the admissible dimensions $k$, weight triples $w=(w_1,w_2,w_3)$, and
the weight distribution in the form $(A_{w_1},A_{w_2},A_{w_3})$.
The last column contains known results about the existence of codes with these parameters.
For some cases we can state the number of isomorphism types of those codes. The $8$-divisible $[n,k]_2$ codes with length at most $48$ are classified in
\cite{triply-Munemasa} and the projective codes are extracted in \cite{ubt_eref40887}.
If not mentioned otherwise, the remaining complete classification results are obtained with the
software package \texttt{LinCode} \cite{bouyukliev2021computer}.
For the parameters marked with $\ge 1$ we constructed at least one code by prescribing an automorphism group, see~\cite{bkw2005}.

We mark the non-existence results with the keyword {\lq\lq}\textbf{None}{\rq\rq} in the comment column of Table~\ref{table:q2} and give a reference to the used method.
One frequently showing up is the following.

\begin{lemma}(\cite[Proposition 5]{dodunekov1999some}, cf.~\cite{simonis1994restrictions})
  \label{lem:div_one_more}
  Let $C$ be an $[n,k,d]_2$-code with all weights divisible by $\Delta =2^a$ and let $\left(A_i\right)_{i=0,1,\dots,n}$ be the weight distribution of $C$. Put
  \begin{eqnarray*}
    \alpha&:=&\min\{k-a-1,a+1\},\\
    \beta&:=&\lfloor(k-a+1)/2\rfloor,\text{ and}\\
    \delta&:=&\min\{2\Delta i\,\mid\,A_{2\Delta i }\neq 0\text{ and } i>0\}.
  \end{eqnarray*}
  Then the integer
  $$
    T:=\sum_{i=0}^{\lfloor n/(2\Delta)\rfloor} A_{2\Delta i}
  $$
  satisfies the following conditions.
  \begin{enumerate}[label=(\roman*)]
    \item \label{div_one_more_case1}
          $T$ is divisible by $2^{\lfloor(k-1)/(a+1)\rfloor}$.
    \item \label{div_one_more_case2}
          If $T<2^{k-a}$, then
          $$
            T=2^{k-a}-2^{k-a-t}
          $$
          for some integer $t$ satisfying $1\le t\le \max\{\alpha,\beta\}$. Moreover, if $t>\beta$, then $C$ has an $[n,k-a-2,\delta]_2$-subcode and if $t\le \beta$, it has an
          $[n,k-a-t,\delta]_2$-subcode.
    \item \label{div_one_more_case3}
          If $T>2^k-2^{k-a}$, then
          $$
            T=2^k-2^{k-a}+2^{k-a-t}
          $$
          for some integer $t$ satisfying $0\le t\le \max\{\alpha,\beta\}$. Moreover, if $a=1$, then $C$ has an $[n,k-t,\delta]_2$ subcode.
          If $a>1$, then $C$ has an
          $[n,k-1,\delta]_2$ subcode unless $t=a+1\le k-a-1$, in which case it has an $[n,k-2,\delta]_2$ subcode.
  \end{enumerate}
\end{lemma}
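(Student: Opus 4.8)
Set $\Delta=2^{a}$ and, for $c\in C$, write $m(c)=w(c)/2^{a}\in\Z$; then $T$ is the number of codewords $c$ with $m(c)$ even, so $2T-2^{k}=\sum_{c\in C}(-1)^{m(c)}$. The statement is thus really about the $\F_{2}$-valued function $g(c)=m(c)\bmod 2$ on the $k$-dimensional space $C$ and the size of $g^{-1}(0)$. The first goal is to linearise $g$. From $w(c+c')=w(c)+w(c')-2\lvert c\cap c'\rvert$ together with $2^{a}\mid w(c),w(c'),w(c+c')$ one reads off $2^{a-1}\mid\lvert c\cap c'\rvert$, so that $B(c,c'):=\lvert c\cap c'\rvert/2^{a-1}\bmod 2$ is well defined and $g(c+c')=g(c)+g(c')+B(c,c')$. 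When $a=1$ we have $B(c,c')=\lvert c\cap c'\rvert\bmod 2$, the ordinary symmetric bilinear form, so $g$ is a genuine quadratic form on $\F_{2}^{k}$ and nothing further is needed. When $a\ge 2$, bilinearity of $B$ is no longer automatic (it would require $2^{a-1}$ to divide all triple intersections, which $2^{a}$-divisibility of the weights does not force on its own), and here I would invoke Ward's structure theory of $2^{a}$-divisible binary codes \cite{ward1981divisible}: up to equivalence such a $C$ is built from a $2$-divisible code of dimension $m:=k-a+1$ by Ward's replication construction together with first-order Reed--Muller and anticode pieces. This both recovers the needed divisibility of intersections on the essential part and explains the dimension shift by $a-1$.

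With this reduction, $T=\lvert g^{-1}(0)\rvert$ is computed by the classical theory of quadratic forms over $\F_{2}$ on a space of dimension $m$: if $2r$ denotes the rank of the polar form, then either $T\in\{2^{m},2^{m-1}\}$ (the cases $g\equiv 0$, and $g$ restricting to a nonzero linear form on the radical of $B$), or $T=2^{m-1}\pm 2^{m-1-r}=2^{k-a}\pm 2^{k-a-r}$ with $1\le r\le\lfloor m/2\rfloor=\beta$, the sign being governed by the Arf invariant of $g$. Part~(i) is then immediate: in the extreme cases $T$ is a power of $2$, and otherwise $v_{2}(T)=m-1-r\ge m-1-\lfloor m/2\rfloor=\lfloor(m-1)/2\rfloor\ge\lfloor(k-1)/(a+1)\rfloor$, using $m=k-a+1$ and $a+1\ge 2$. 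For~(ii) and~(iii), the hypotheses $T<2^{k-a}$ and $T>2^{k}-2^{k-a}$ pick out the large-rank regime and force $T$ into exactly the stated shape $2^{k-a}-2^{k-a-t}$, resp.\ $2^{k}-2^{k-a}+2^{k-a-t}$; the bound $t\le\max\{\alpha,\beta\}$ reflects two competing estimates for the rank of $B$, the generic $r\le\lfloor(k-a+1)/2\rfloor=\beta$ and, in the degenerate case where $g$ is a nonzero linear form on a large radical, the bound $t\le\min\{k-a-1,a+1\}=\alpha$ coming from Ward's divisibility bound applied to the relevant subcode. Finally the asserted subcodes --- of dimension $k-a-t$, $k-a-2$, $k-t$, $k-1$ or $k-2$ depending on the case --- are produced explicitly as the $\F_{2}$-span of the codewords realising the extremal value of $g$ inside the $2^{a+1}$-divisible locus $g^{-1}(0)$; the same intersection/rank bookkeeping shows that this span has the claimed dimension and minimum weight at least $\delta$.

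The step I expect to be the real obstacle is the first one for $a\ge 2$: extracting, from $2^{a}$-divisibility of the \emph{weights}, enough extra $2$-divisibility of the small set-intersections to make the problem quadratic over $\F_{2}$. For $a=1$ this is elementary, but for larger $a$ I see no way around Ward's structure theorem, and the bookkeeping that then matches the exponents $\alpha$, $\beta$, $t$ to the rank and radical of $B$ --- and in particular produces subcodes of the prescribed minimum distance $\delta$ --- is where the argument becomes genuinely technical. The remaining parts (the evaluation $T=\lvert g^{-1}(0)\rvert$ and the deduction of (i)--(iii)) are then routine quadratic-form theory.
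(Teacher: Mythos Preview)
The paper does not prove this lemma at all: it is quoted verbatim from \cite{dodunekov1999some} (building on \cite{simonis1994restrictions}) and used as a black box, so there is no ``paper's own proof'' to compare against. What I can do is assess your sketch on its merits.

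For $a=1$ your argument is essentially correct and is indeed the classical one: with all weights even, $g(c)=w(c)/2\bmod 2$ is a quadratic form on $C\cong\F_2^{k}$ with polar form the standard inner product, and the possible values of $|g^{-1}(0)|$ together with the associated totally isotropic subcodes give exactly parts~(i)--(iii).

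The real issue is your treatment of $a\ge 2$. You correctly identify that bilinearity of your $B$ fails in general, but the remedy you propose --- ``Ward's structure theory'' reducing to a $2$-divisible code of dimension $m=k-a+1$ --- is not what Ward's theorem in \cite{ward1981divisible} says (that result is about repetition structure, not a dimension reduction of this kind), and the exponent bookkeeping you derive from it ($v_2(T)\ge\lfloor(m-1)/2\rfloor\ge\lfloor(k-1)/(a+1)\rfloor$, the bound $t\le\alpha$ from ``Ward's divisibility bound applied to the relevant subcode'') is not justified. Likewise, the construction of the claimed subcodes of dimensions $k-a-t$, $k-a-2$, $k-1$, $k-2$ with minimum weight $\ge\delta$ is asserted rather than carried out; this is precisely where the argument in the original sources does nontrivial work.

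The actual route taken in \cite{simonis1994restrictions,dodunekov1999some} is different in spirit: one represents the coordinates of $C$ as linear Boolean functions on $\F_2^{k}$ and observes that $2^{a}$-divisibility of all weights forces certain symmetric Boolean polynomials in these linear forms to lie in a specific Reed--Muller code $\RM(r,k)$. The quantity $2T-2^{k}=\sum_{c}(-1)^{w(c)/2^{a}}$ is then a character sum attached to a Boolean function of bounded degree, and McEliece-type divisibility for Reed--Muller weights yields part~(i) directly as well as the precise shapes in (ii) and (iii); the subcodes arise as kernels of the relevant Reed--Muller words. If you want to complete your sketch for $a\ge 2$, this Reed--Muller/Boolean-function framework is what you need to replace the appeal to Ward.
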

A special and well-known subcase of Lemma~\ref{lem:div_one_more} is that the number of even weight codewords in a $[n,k]_2$ code is either
$2^{k-1}$ or $2^k$, see Lemma~\ref{lemma_even_weight_subcode}.
As an example, For $n=32$, $k=10$, and weight triple $w=(8, 16, 24)$ we obtain $(A_{w_1},A_{w_2},A_{w_3}) = (61,899,63)$. 
Applying Lemma~\ref{lem:div_one_more} gives $\Delta=8$, $a=3$, $\alpha=4$, $\beta=4$, $\delta=16$, and $T=900$.
As required by Part~\ref{div_one_more_case1}, $T$ is divisible by $4$.
However, Part~\ref{div_one_more_case3} gives $t=5$,
which contradicts $0\le t\le \max\{\alpha,\beta\}$, so that a code cannot exist.

Bounds for the largest possible minimum distance for given length and dimension are well studied in the literature, see e.g.\ the online tables \cite{grassl2007bounds}.
For length $n=64$ and dimension $k=11$ the largest possible minimum distance is known to be either $26$ or $27$,
which rules out the existence of a projective code with
weight triple $w=(28, 32, 36)$. We use the comment {\lq\lq}codetables{\rq\rq} in this case.
For $n=64$ and $w=(24, 32, 40)$ we use a classification result from
\cite{jaffe1997sextic}, i.e., every $13$-dimensional $8$-divisible binary linear code with non-zero weights in
$\{24,32,40,56,64\}$ has to contain a codeword of weight $64$.
Anticipating the results from Section~\ref{sec_divisibility} we also apply Corollary~\ref{cor_n_divisible_by_4},
which shows that the length $n$ has to be divisible
by $4$. The case $n=58$ is excluded by that criterion.
For length $n=64$ and weight triple $w=(16,32,48)$, the dimension can be at most $11$ by Theorem~\ref{thm:4Delta}.
Just four cases remain undecided. They occur for length $n\in\{40,48,56,64\}$ and
are marked by {\lq\lq}\textbf{Open}{\rq\rq}.
For each feasible case we give a suitable generator matrix as an example in Appendix~\ref{appendix_generator_matrices}.

  \begin{center}
    \begin{longtable}[c]{lllll}
    \caption{Admissible and realizable parameters of binary projective three-weight codes}\label{table:q2}\\
    $n$ & $k$ & $(w_1,w_2,w_3)$ & $(A_{w_1},A_{w_2},A_{w_3})$ & isom. types \\ \hline
    4 & 3  & $(1, 2, 3)$ & $(1,3,3)$ & 1 \\
    8 & $4$ & $(2, 4, 6)$ & $(1,11,3)$ & 1 \\
    8 & $5$ & $(2, 4, 6)$ & $(5,19,7)$ & 1 \\
    8 & $6$ & $(2, 4, 6)$ & $(13,35,15)$ & 1 \\
    12 & $5$ & $(4, 6, 8)$ & $(6,16,9)$ & 4 \\
    12 & $6$ & $(4, 6, 8)$ & $(18,24,21)$ & 2 \\
    16 & $5$ & $(6, 8, 10)$& $(6,15,10)$ & 5 \\
    16 & $6$ & $(6, 8, 10)$& $(22,15,26)$ & 1 \\
    16 & $7$ & $(6, 8, 10)$& $(54,15,58)$ & \textbf{None} Lem.~\ref{lem:div_one_more}\\
    16 & $5$ & $(4, 8, 12)$& $(1,27,3)$ & 1 \\
    16 & $6$ & $(4, 8, 12)$& $(5,51,7)$ & 1 \\
    16 & $7$ & $(4, 8, 12)$& $(13,99,15)$ & 2 \\
    20 & $5$ & $(8, 10, 12)$& $(5,16,10)$ & 3 \\
    20 & $6$ & $(8, 10, 12)$& $(25,8,30)$ & \textbf{None} Lem.~\ref{lem:div_one_more}\\
    24 & $5$ & $(10, 12, 14)$& $(3,19,9)$ & 1 \\
    24 & $6$ & $(10, 12, 14)$& $(27,3,33)$ & \textbf{None} Lem.~\ref{lem:div_one_more}\\
    24 & $6$ & $(8, 12, 16)$& $(6,48,9)$ & 8 \\
    24 & $7$ & $(8, 12, 16)$& $(18,88,21)$ & 52 \\
    24 & $8$ & $(8, 12, 16)$& $(42,168,45)$ & 66 \\
    24 & $9$ & $(8, 12, 16)$& $(90,328,93)$ & 13 \\
    24 & $10$ & $(8, 12, 16)$& $(186,648,189)$ & 2 \\
    24 & $11$ & $(8, 12, 16)$& $(378,1288,381)$ & 1 \\
    32 & $6$ & $(12, 16, 20)$& $(6,47,10)$ & $\ge$ 1\\
    32 & $7$ & $(12, 16, 20)$& $(22,79,26)$ & $\ge$ 1\\
    32 & $8$ & $(12, 16, 20)$& $(54,143,58)$ & $\ge$ 1\\
    32 & $9$ & $(12, 16, 20)$& $(118,271,122)$ & $\ge$ 1\\
    32 & $10$ & $(12, 16, 20)$& $(246,527,250)$ & $\ge$ 1\\
    32 & $6$ & $(8, 16, 24)$& $(1,59,3)$ & 1 \\
    32 & $7$ & $(8, 16, 24)$& $(5,115,7)$ & 1 \\
    32 & $8$ & $(8, 16, 24)$& $(13,227,15)$ & 2 \\
    32 & $9$ & $(8, 16, 24)$& $(29,451,31)$ & 1 \\
    32 & $10$ & $(8, 16, 24)$& $(61,899,63)$ & \textbf{None} Lem.~\ref{lem:div_one_more}\\  
    40 & $6$ & $(18, 20, 22)$& $(25,3,35)$ & \textbf{None} Lem.~\ref{lem:div_one_more}\\
    40 & $6$ & $(16, 20, 24)$& $(5,48,10)$ & $\ge$ 1\\
    40 & $7$ & $(16, 20, 24)$& $(25,72,30)$ & $\ge$ 1\\
    40 & $8$ & $(16, 20, 24)$& $(65,120,70)$ & $\ge$ 1\\
    40 & $9$ & $(16, 20, 24)$& $(145,216,150)$ & $\ge$ 1\\
    40 & $10$ & $(16, 20, 24)$& $(305,408,310)$ & \textbf{Open} \\
    48 & $6$ & $(22, 24, 26)$& $(18,15,30)$ & 1 \\
    48 & $6$ & $(20, 24, 28)$& $(3,51,9)$ & 1 \\
    48 & $7$ & $(20, 24, 28)$& $(27,67,33)$ & $\ge$ 209\,586 \\
    48 & $8$ & $(20, 24, 28)$& $(75,99,81)$ & $\ge$ 86 \\
    48 & $9$ & $(20, 24, 28)$& $(171,163,177)$ & \textbf{Open}\\
    48 & $7$ & $(16, 24, 32)$& $(6,112,9)$ &8 \\
    48 & $8$ & $(16, 24, 32)$& $(18,216,21)$ & 66 \\
    48 & $9$ & $(16, 24, 32)$& $(42,424,45)$ & $\ge $ 7 \\
    48 & $10$ & $(16, 24, 32)$& $(90,840,93)$ & $\ge 2$ \\
    48 & $11$ & $(16, 24, 32)$& $(186,1672,189)$  & $\ge$ 2 \\
    48 & $12$ & $(16, 24, 32)$& $(378,3336,381)$ & \\
    52 & $6$ & $(24, 26, 28)$& $(13,24,26)$ & 1 \\
    56 & $6$ & $(26, 28, 30)$& $(7,35,21)$ & 1 \\
    56 & $7$ & $(24, 28, 32)$& $(28,64,35)$ & $\ge$ 1\\
    56 & $8$ & $(24, 28, 32)$& $(84,80,91)$ & $\ge$ 1\\
    56 & $9$ & $(24, 28, 32)$& $(196,112,203)$ & $\ge$ 1\\
    56 & $10$ & $(24, 28, 32)$& $(420,176,427)$ & \textbf{Open}\\
    58 & $8$ & $(24, 31, 32)$& $(76,128,51)$ & \textbf{None} Cor.~\ref{cor_n_divisible_by_4} \\
    64 & $7$ & $(28, 32, 36)$& $(28,63,36)$ & $\ge$ 1\\
    64 & $8$ & $(28, 32, 36)$& $(92,63,100)$ & $\ge$ 1\\
    64 & $9$ & $(28, 32, 36)$& $(220,63,228)$ & $\ge$ 1\\
    64 & $10$ & $(28, 32, 36)$& $(476,63,484)$ & \textbf{Open} \\ 
    64 & $11$ & $(28, 32, 36)$& $(988,63,996)$ & \textbf{None} codetables \\
    64 & $7$ & $(24, 32, 40)$& $(6,111,10)$ & $\ge$ 1\\
    64 & $8$ & $(24, 32, 40)$& $(22,207,26)$ & $\ge$ 1\\
    64 & $9$ & $(24, 32, 40)$& $(54,399,58)$ & $\ge$ 1\\
    64 & $10$ & $(24, 32, 40)$& $(118,783,122)$ & $\ge$ 1\\
    64 & $11$ & $(24, 32, 40)$& $(246,1551,250)$ & 42 \\  
    64 & $12$ & $(24, 32, 40)$& $(502,3087,506)$ & 1 \\ 
    64 & $13$ & $(24, 32, 40)$& $(1014,6159,1018)$ & \textbf{None} \cite{jaffe1997sextic} \\ 
    64 & $7$ & $(16, 32, 48)$& $(1,123,3)$ & $\ge$ 1\\
    64 & $8$ & $(16, 32, 48)$& $(5,243,7)$ & $\ge$ 1\\
    64 & $9$ & $(16, 32, 48)$& $(13,483,15)$ & $\ge$ 1\\
    64 & $10$ & $(16, 32, 48)$& $(29,963,31)$ & $\ge$ 1\\
    64 & $11$ & $(16, 32, 48)$& $(61,1923,63)$ & 1 \cite{kurz2020classification}\\ 
    64 & $12$ & $(16, 32, 48)$& $(125,3843,127)$ & \textbf{None} Theorem~\ref{thm:4Delta}\\
    64 & $13$ & $(16, 32, 48)$& $(253,7683,255)$ & \textbf{None} Theorem~\ref{thm:4Delta}\\
    64 & $14$ & $(16, 32, 48)$& $(509,15363,511)$ & \textbf{None} Theorem~\ref{thm:4Delta}\\
    64 & $15$ & $(16, 32, 48)$& $(1021,30723,1023)$ & \textbf{None} Theorem~\ref{thm:4Delta}\\
    68 & $9$ & $(30, 32, 40)$& $(64,299,148)$ & \textbf{None} Theorem~\ref{thm:4Delta}\\
    \end{longtable}
  \end{center}

Based on \cite[Thm.~4]{Liu-2010-IntJInfCodingTheory1[4]:355-370} (for the projective case an alternative proof is found in \cite[Sec.~4]{Kiermaier-Kurz-2021-arXiv:2011.05872}), we derive the following classification result on three-weight codes.

\begin{theorem}
\label{thm:4Delta}
Let $\Delta = 2^a$ with $a\geq 3$ an integer and let $C$ be a full-length $[n,k]_2$ three-weight code with the non-zero weights $\Delta$, $2\Delta$ and $3\Delta$ and length $3\Delta \leq n \leq 4\Delta$.
Then $k \leq 2a+3$.
In the case of equality, we have that $n \in \{4\Delta - 1, 4\Delta\}$, $C$ is projective and falls into one of the following two cases.
\begin{enumerate}[(i)]
\item\label{thm:4Delta:minus1}
For $n = 4\Delta - 1$, $C$ is isomorphic to the direct sum of the binary simplex code of dimension $a+1$ and the binary first order Reed-Muller code of dimension $a+2$.
The weight enumerator of $C$ is
\[
	W_C(x) = 1 + (6\Delta - 3) x^{\Delta} + (8\Delta^2 - 8 \Delta + 3) x^{2\Delta} + (2\Delta - 1) x^{3\Delta}\text{.}
\]
\item\label{thm:4Delta:main}
For $n = 4\Delta$, $C$ is isomorphic to the code with the generator matrix
\[
\left(
\begin{array}{ccc|ccc}
\cline{1-3}
\multicolumn{1}{|c}{}  &         &   &   &         &                        \\
\multicolumn{1}{|c}{}  & R_{a+2} &   &   &         &                        \\ \cline{4-6}
\multicolumn{1}{|c}{1} & \cdots  & 1 &   &         & \multicolumn{1}{c|}{}  \\ \cline{1-3}
                       &         &   &   & R_{a+2} & \multicolumn{1}{c|}{}  \\
                       &         &   & 1 & \cdots  & \multicolumn{1}{c|}{1} \\ \cline{4-6}
\end{array}
\right) \in \F_2^{(2a+3)\times 4\Delta}\text{,}
\]
where
\[
	\begin{pmatrix}
	 & & \\
	 & R_{a+2} & \\
	 1 & \cdots & 1
	 \end{pmatrix}
	 \in\F_2^{(a+2) \times 2\Delta}
\]
denotes a generator matrix of the binary first order Reed-Muller code of dimension $a+2$, such that the all-one word is the last row of the generator matrix.
The weight enumerator of $C$ is
\[
	W_C(x) = 1 + (4\Delta - 3) x^{\Delta} + (8\Delta^2 - 8 \Delta + 3) x^{2\Delta} + (4\Delta - 1) x^{3\Delta}\text{.}
\]
\end{enumerate}

\end{theorem}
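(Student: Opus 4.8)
The plan is to reduce the statement to the already-cited dimension bound for $\Delta$-divisible codes and then extract the extremal structure geometrically. First I would invoke the result of Liu \cite[Thm.~4]{Liu-2010-IntJInfCodingTheory1[4]:355-370} (respectively the projective-case argument in \cite[Sec.~4]{Kiermaier-Kurz-2021-arXiv:2011.05872}): for a full-length $\Delta$-divisible $[n,k]_2$ code with $n \leq 4\Delta$, one has $k \leq 2a+3$. Since all three non-zero weights $\Delta, 2\Delta, 3\Delta$ are divisible by $\Delta = 2^a$, the hypothesis that $C$ is $\Delta$-divisible is met, and $n \leq 4\Delta$ is assumed, so the bound $k \leq 2a+3$ is immediate. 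The work then lies in the equality case.

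Assume $k = 2a+3$. I would first pin down $n$: the weight distribution $(A_\Delta, A_{2\Delta}, A_{3\Delta})$ is determined by $n$, $k$, and the weights via the power-moment equations (\ref{eq_ppm1proj})--(\ref{eq_ppm3proj}) together with a fourth relation, exactly as in (\ref{eq_A1})--(\ref{eq_B3}). Imposing non-negativity of $A_\Delta$, $A_{3\Delta}$, and $B_3$ for $k = 2a+3$ should force $n \in \{4\Delta-1, 4\Delta\}$; in particular the extremal codes turn out to be projective (a proper repetition would contradict the dimension count by Ward's theorem, and $B_2 = 0$ follows from Lemma~\ref{lem:col_mult_B2} once the multiplicities are pinned down). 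For each of the two values of $n$ I would then identify $C$ up to isomorphism. The geometric picture from Section~\ref{sec_preliminaries} is the right language: a codeword of weight $w$ corresponds to a hyperplane missing $n-w$ points of the associated point set $\mathcal{C}$, so the three weights say that every hyperplane of $\PG(2a+2,2)$ meets $\mathcal{C}$ in exactly $n-3\Delta$, $n-2\Delta$, or $n-\Delta$ points. For $n = 4\Delta$ this means hyperplane intersection sizes lie in $\{\Delta, 2\Delta, 3\Delta\}$, a strong combinatorial condition; I expect to recognize $\mathcal{C}$ as built from two copies of the affine point set underlying $\RM_2(a+2)$ glued along a common line, which is precisely the generator matrix displayed in case~\ref{thm:4Delta:main}. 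For $n = 4\Delta - 1$ a puncturing/complement argument (using the anticode construction at the end of Section~\ref{sec_preliminaries}) should reduce this to recognizing the direct sum $\Sim_2(a+1) \oplus \RM_2(a+2)$, whose weight enumerator is the product of the two known enumerators from items (i) and (ii) of the geometric list, giving the stated polynomial.

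The main obstacle is the uniqueness part of the equality case: showing that the combinatorial constraint ``every hyperplane meets $\mathcal{C}$ in $\Delta$, $2\Delta$, or $3\Delta$ points'' forces exactly the claimed configuration, and not merely that some code with the right weight enumerator exists. I would handle this by a counting argument on the derived codes: projecting from a well-chosen point $P$ (the projections $\pi_P(\mathcal{C})$ correspond to the codimension-one subcodes, as explained in Section~\ref{sec_preliminaries}) should lower-dimensional pieces that are forced to be simplex or Reed-Muller codes by Lemma~\ref{lemma_1wt} and Lemma~\ref{lemma_delsarte}, and then reassembling these forces the block structure of the generator matrix. The verification that the two exhibited codes actually attain the parameters — computing their weight enumerators as products and direct checks — is routine and can be relegated to a short computation.
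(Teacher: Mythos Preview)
Your invocation of Liu's bound is misstated, and this is the genuine gap. Liu's theorem gives $k \leq 2a+4$ (not $2a+3$) for a $\Delta$-divisible binary code of length $4\Delta$: the code $\RM_2(a+2)\oplus\RM_2(a+2)$ has length $4\Delta$, dimension $2a+4$, and is $\Delta$-divisible, so your claimed bound ``$n\leq 4\Delta$ implies $k\leq 2a+3$'' is false as a statement about arbitrary $\Delta$-divisible codes. The paper closes this gap with a short but essential trick: after padding $C$ with zero positions to length $4\Delta$, one forms $\bar C = C + \langle\mathbf 1\rangle$. Since the all-one word has weight $4\Delta\notin\{\Delta,2\Delta,3\Delta\}$, it is not in $C$, so $\dim\bar C = k+1$; now Liu applied to $\bar C$ gives $k+1\leq 2a+4$, i.e.\ $k\leq 2a+3$. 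This step is exactly what your three-weight hypothesis buys beyond mere $\Delta$-divisibility, and you need to use it.

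The same trick also makes the equality analysis far cleaner than the route you sketch. In the equality case Liu's theorem identifies $\bar C$ \emph{uniquely} as $\RM_2(a+2)\oplus\RM_2(a+2)$, so geometrically $\bar{\mathcal C}=A_1\cup A_2$ is the disjoint union of two affine $(a+1)$-spaces in $\PG(2a+3,2)$. Then $C$ is a codimension-one subcode of $\bar C$ not containing $\mathbf 1$, hence $\mathcal C = \pi_P(\bar{\mathcal C})$ for some point $P$ outside the hyperplane $H_0$ corresponding to $\mathbf 1$. The dichotomy $P\in\bar{\mathcal C}$ versus $P\notin\bar{\mathcal C}$ produces exactly the two cases (i) and (ii), and in each case the image is visibly of the stated form. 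By contrast, your plan---pin down $n$ via the power moments, then recognise the configuration directly from the hyperplane intersection numbers---would require an ad hoc uniqueness argument that you yourself flag as the main obstacle; the paper sidesteps it entirely by working top-down from the known $\bar C$.
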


\begin{proof}
After appending zero positions, we may consider $C$ as a code of length $4\Delta$.
Let $\mathbf{1}$ be the all-one word of length $4\Delta$.
The code $\bar{C} = C + \langle \mathbf{1} \rangle$ is a $\Delta$-divisible binary linear code of effective length $4\Delta$ containing the all-one word $\mathbf{1}$.
By \cite[Thm.~4]{Liu-2010-IntJInfCodingTheory1[4]:355-370}, $\dim(\bar{C}) \leq 2a + 4$, and in the case of equality we may assume $\bar{C} = \RM_2(a+2) \oplus \RM_2(a+2)$, up to isomorphism.
So $k = \dim(C) \leq 2a + 3$, and in the case of equality, $C$ is a codimension $1$ subcode of $\bar{C}$ not containing $\mathbf{1}$.

We switch to the geometric description of linear codes.
The corresponding point set of $\bar{C} = \RM_2(a+2) \oplus \RM_2(a+2)$ has the form $\bar{\mathcal{C}} = A_1 \cup A_2$ with $A_1 = V_1 \setminus W_1$ and $A_2 = V_2 \setminus W_2$, where $V_1$ and $V_2$ are vector spaces over $\F_2$ of dimension $a+2$ having trivial intersection, and $W_1 < V_1$, $W_2 < V_2$ are codimension $1$ subspaces.%
\footnote{Remember that $V_1 \setminus W_1$ is a lazy way for writing $\qbinom{V_1}{1}{q} \setminus \qbinom{W_1}{1}{q}$.}
The ambient vector space is $V = V_1 \oplus V_2$.
The codeword $\mathbf{1}\in \bar{C}$ corresponds to a hyperplane $H_0$ of $V$ not containing any point of $\bar{\mathcal{C}}$.
By the dimension formula, $\dim(H_0 \cap V_1) \geq a+1$, which forces $H_0 \cap V_1 = W_1$.
In the same way, $H_0 \cap V_2 = W_2$ and therefore, $W_1 + W_2 < H_0$.
Since $W_1 + W_2$ has codimension $2$ in $V$, there are only $\qbinom{2}{1}{2} = 3$ hyperplanes of $V$ containing $W_1 + W_2$.
Two of these are $V_1 + W_2$ and $W_1 + V_2$ which do contain points of $\bar{\mathcal{C}}$, so $H_0$ is the third one.

As $C$ is a subcode of $\bar{C}$ of codimension $1$, a corresponding point set $\mathcal{C}$ of $C$ is given by the multiset image $\pi_P(\bar{\mathcal{C}})$ of the projection $\pi_P : V \to V/P$, $x\mapsto x + P$ with respect to a suitable point $P\in\qbinom{V}{1}{q}$.
Since $\mathbf{1} \notin C$, we have that $P \notin H_0$, so $P$ must be contained in one of the other two hyperplanes containing $W_1 + W_2$.
Without restriction, we may assume $P\in V_1 + W_2$.
Together with $P\notin H_0$, this implies $P\in (V_1 + W_2) \setminus (W_1 + W_2)$.

Case 1: $P\in \bar{\mathcal{C}}$, so $P\in V_1 \setminus W_1$.
We get that $\pi_P(A_1)$ is the set of all points in a subspace of algebraic dimension $a+1$, $\pi_P(A_2)$ is again an affine subspace of dimension $a+2$, and $\langle\pi_P(A_1)\rangle \cap \langle\pi_P(A_2)\rangle = \{\mathbf{0}\}$.
Therefore, $C \cong \Sim_2(a+1) \oplus \RM_2(a+2)$.
The weight enumerator is computed as $W_C(x) = W_{\Sim_2(a+1)}(x) \cdot W_{\RM_2(a+2)}(x)$.

Case 2: $P\notin\bar{\mathcal{C}}$, so  $P\in (V_1 + W_2) \setminus ((W_1 + W_2) \cup V_1)$.
A moment's reflection shows that all these choices for $P$ lead to equivalent point sets $\bar{\mathcal{C}}$.
As $P$ is not collinear with two different points of $\bar{\mathcal{C}}$, the projection $\mathcal{C}$ with respect to $P$ is a proper set and therefore, $C$ is projective.
So $\mathcal{C}$ is the disjoint union of the two affine subspaces $\pi_P(A_1)$ and $\pi_P(A_2)$ of dimension $a+1$.

The dimension formula leads to $\dim(\pi_P(V_1) \cap \pi_P(V_2)) = 1$.
There are unique points $Q_1\in V_1\setminus W_1$ and $Q_2\in W_2$ such that $P$ is the third point on the line $L = Q_1 + Q_2$
The affine space $\pi_P(A_2)$ has the hyperplane at infinity $(W_2 + P) / P$, which contains the single point $(Q_1 + P)/P = (Q_2 + P) / P = L/P$ of the affine space $\pi_P(A_1)$.
So the point $\pi_P(V_1) \cap \pi_P(V_2) = L/P$ is contained in $\pi_P(A_1)$ and in the hyperplane at infinity of $\pi_P(A_2)$.
This leads to the generator matrix stated in the theorem.

By construction, the code corresponding to the point set $\mathcal{C}$ is a projective $[2^{a+1}, 2a+3]_2$-code with (at most) the weights $\Delta$, $2\Delta$ and $3\Delta$.
Equations~\eqref{eq_A1}, \eqref{eq_A2} and \eqref{eq_A3} evaluate to the stated weight enumerator of $C$.
\end{proof}

Looking at the feasible cases in Table~\ref{table:q2}, we notice that all of them satisfy $w_2=n/2$, which corresponds to $\theta_2=0$,
$\theta_3=-\theta_1$ for the eigenvalues of $\SWRGparam$-SWRGs, see Equation~(\ref{eq_diophantine_vandam}). While we conjecture
that all integral solutions of Equation~(\ref{eq_diophantine_vandam}) satisfy this extra constraint for all $\SWRGparam\ge 5$, see
Section~\ref{sec_sum_monomials}, the condition $\theta_1+\theta_2+\theta_3=0$, i.e., $w_1+w_2+w_3=3n(q-1)/q$, is sufficient for
$\SWRGparam=3$.
So, it is an interesting open question, if $3$-SWRGs obtained from the coset graph of the dual code of a projective three-weight code also have to satisfy this extra condition.
To stimulate research into this direction we propose:

\begin{conjecture}
  \label{conj_w2_n}
  Let $C$ be a projective $[n,k]_2$ three-weight code with non-zero weights $w_1 < w_2 < w_3$ satisfying $w_1 + w_2 + w_3 = \tfrac{3n}{2}$.
  Then $w_2=\frac{n}{2}$.
\end{conjecture}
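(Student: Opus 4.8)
The plan is to work entirely within the combinatorics of binary projective three-weight codes, using the MacWilliams relations from Section~\ref{sec_preliminaries} together with divisibility arguments in the spirit of Lemma~\ref{lem:div_one_more} and Theorem~\ref{thm:4Delta}. Write $w_1 < w_2 < w_3$ with $w_1+w_2+w_3 = \tfrac{3n}{2}$ and set $\delta = w_3 - w_2$, $\varepsilon = w_2 - w_1$; the claim $w_2 = n/2$ is equivalent to $w_1 + w_3 = n$, i.e.\ to $\delta = \varepsilon$. First I would normalize: by the results of Section~\ref{sec_divisibility} (Corollary~\ref{cor_n_divisible_by_4} and Lemma~\ref{lem:divisibility_2}), which may be assumed, $n$ is divisible by $4$ and the weights carry a controlled $2$-power divisibility; after dividing out the largest common power of $2$ we may assume $\gcd(w_1,w_2,w_3,2)=1$ is odd or reduce to the primitive case, which tightens the arithmetic. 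Then I would feed the three known values $A_{w_1}, A_{w_2}, A_{w_3}$ from \eqref{eq_A1}--\eqref{eq_A3} and $B_3$ from \eqref{eq_B3} into the first four power moments \eqref{eq_ppm1proj}--\eqref{eq_ppm4proj}, eliminate, and read off a Diophantine constraint relating $n$, $\delta$, $\varepsilon$ and the dimension $k$.

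The key algebraic step is to exploit the constraint $w_1+w_2+w_3 = \tfrac{3n}{2}$ to rewrite the numerators of \eqref{eq_A1}--\eqref{eq_A3} symmetrically. Setting $w_i = \tfrac{n}{2} + d_i$ with $d_1 + d_2 + d_3 = 0$ and $d_1 < d_2 < d_3$, a direct substitution shows that the quantity $n^2 - 2nw_j - 2nw_\ell + 4w_jw_\ell + n$ appearing in $A_{w_i}$ becomes $\tfrac{n}{2} + 4 d_j d_\ell = \tfrac{n}{2} - 4 d_i(d_i + 3d_{\text{other}})$ — more cleanly, $4d_jd_\ell = 4d_jd_\ell$ with $d_j + d_\ell = -d_i$ — so that
\[
  A_{w_i} = \frac{2^{k-2}\bigl(\tfrac{n}{2} + 4 d_j d_\ell\bigr) - \bigl(\tfrac{n}{2}+d_j\bigr)\bigl(\tfrac{n}{2}+d_\ell\bigr)}{(d_i - d_j)(d_i - d_\ell)}\text{,}
\]
and similarly $B_3$ collapses to a polynomial in $n$ and the elementary symmetric functions $e_2 = d_1d_2 + d_1d_3 + d_2d_3 \le 0$ and $e_3 = d_1 d_2 d_3$. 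The target statement $w_2 = n/2$ is the statement $d_2 = 0$. I would now argue that $d_2 \ne 0$ forces a violation: either some $A_{w_i}$ or $B_i$ fails to be a non-negative integer, or a higher power moment / Lemma~\ref{lem:div_one_more}-type divisibility obstruction kicks in. Concretely, the "even weight subcode" subcase of Lemma~\ref{lem:div_one_more} (all three weights are even, since $n$ is divisible by $4$ and $w_1+w_2+w_3$ is even) already pins the number of codewords in any index-$2$ subcode, and iterating the divisibility bound on $T$ as in the $n=32$, $k=10$ example in the text should eliminate the asymmetric triples $\delta \ne \varepsilon$ whenever $k$ is not too small; the small-$k$ range is finite and can be closed by the explicit integrality conditions (iv)--(v) from the definition of admissible parameters.

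The hard part will be handling the low-dimensional tail — precisely the regime where Lemma~\ref{lem:div_one_more} has no teeth because $a$ is small relative to $k$ — and making the case analysis uniform in $n$ rather than a table lookup. For $k \le 5$ or $6$ one cannot rely on divisibility of $T$, and one must instead show directly that the rational expressions for $A_{w_1}, A_{w_3}$ (which have denominators $\varepsilon(\varepsilon+\delta)$ and $\delta(\varepsilon+\delta)$ respectively) force $\delta = \varepsilon$: the numerators are $\tfrac{n}{2} \bmod$ small factors, and an asymmetry $\delta \ne \varepsilon$ typically makes $A_{w_1} \ne A_{w_3}$ in a way incompatible with \eqref{eq_ppm4proj} once $B_3 \ge 0$ is imposed. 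I expect this to reduce, after clearing denominators, to showing a quadratic form in $(\delta, \varepsilon)$ has no solutions with $\delta \ne \varepsilon$ in the relevant box $0 < \delta, \varepsilon$, $\varepsilon + \delta \le n/2$; proving that cleanly, with the $2$-adic valuation bookkeeping from Section~\ref{sec_divisibility} as the extra lever, is the crux. If a fully uniform argument resists, a fallback is to prove the statement for all $k$ above an explicit threshold by the Lemma~\ref{lem:div_one_more} iteration and note that the finitely many remaining $(n,k)$ with small $k$ are covered by Table~\ref{table:q2} and its continuation — though a threshold-free proof is clearly preferable and is what I would aim for first.
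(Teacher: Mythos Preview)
The statement you are trying to prove is \emph{Conjecture}~\ref{conj_w2_n} --- it is not a theorem, and the paper does not prove it. On the contrary, immediately after stating the conjecture the paper says explicitly that ``the MacWilliams identities, using the non-negativity and integrality constraints, are not sufficient to prove Conjecture~\ref{conj_w2_n}'', and Table~\ref{tbl:counterex} lists nine concrete parameter tuples $(n,w_1,w_2,w_3,k)$ with $n \le 256$ and $w_2 \neq n/2$ for which all $A_i$ and $B_i$ are non-negative integers \emph{and} Lemma~\ref{lem:div_one_more} yields no contradiction. So your core strategy --- feed the parameters through \eqref{eq_A1}--\eqref{eq_B3}, impose integrality and non-negativity, and iterate Lemma~\ref{lem:div_one_more} --- is precisely the toolkit the authors have already tried and found insufficient, with an explicit list of surviving cases as evidence that it cannot close the argument.

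Two further specific problems with the sketch: first, you invoke Lemma~\ref{lem:divisibility_2} as part of the normalization step, but that lemma has $w_2 = n/2$ among its \emph{hypotheses}, so using it here is circular. Second, your fallback plan --- prove the conjecture for $k$ above some threshold via Lemma~\ref{lem:div_one_more} and absorb the small-$k$ tail into Table~\ref{table:q2} --- fails for the same reason: the potential counterexamples in Table~\ref{tbl:counterex} have $k$ ranging from $8$ to $11$ (read off from $y = 2^{k-2}$), which is well inside the range where one would expect the lemma to have ``teeth'' by your own criterion, yet it does not exclude them. Any genuine proof will require an ingredient beyond MacWilliams plus the divisibility machinery of Section~\ref{sec_divisibility}; the paper leaves this open.
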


We remark that the MacWilliams identities, using the non-negativity and integrality constraints, are not sufficient to prove
Conjecture~\ref{conj_w2_n}.
As an example, the values $(n,w_1,w_2,w_3)\in\{(58,24, 31, 32), (68, 30, 32, 40)\}$ go in line with these conditions for $q=2$ but
are excluded with more sophisticated methods, see the details stated above. Given the results obtained so far we can state that
Conjecture~\ref{conj_w2_n} is true for all $n<72$. The next case, where all non-negativity and integrality constraints for
the $B_i$ are satisfied, is given by $(n,w_1,w_2,w_3)=(100,46, 48, 56)$. Here we have $k=7$, $A_{w_1}=32$, $A_{w_2}=145$, $A_{w_3}=78$, and
$B_3=580$. However, we can apply Lemma~\ref{lem:div_one_more} to conclude the non-existence of a binary linear code with
these parameters. More precisely, Lemma~\ref{lem:div_one_more}.\ref{div_one_more_case3}, applied with $a=1$ and $T=224$, yields
a contradiction since $T-2^k+2^{k-a}=96$ is not a power of two. In Table~\ref{tbl:counterex} we list all parameters
$\left(n,w_1,w_2,w_3,y=2^{k-2},A_1,A_2,A_3,B_3\right)$ up to $n=256$, where all $B_i$ are integral and non-negative and also
Lemma~\ref{lem:div_one_more} does not yield a contradiction, 
i.e., the parameters of potential counterexamples to Conjecture~\ref{conj_w2_n}:
\begin{table}
\caption{Parameters of potential counterexamples to Conjecture~\ref{conj_w2_n}}
\label{tbl:counterex}
\[
\begin{array}{rrrrrrrrr}
  n & w_1 & w_2 & w_3 & y=2^{k-2} & A_1 & A_2 & A_3 & B_3 \\
  \hline
  112 &  50 &  54 &  64 &  128 &  48 &  336 &  127 &  322 \\
  116 &  54 &  56 &  64 &  128 &  256 &  56 &  199 &  440 \\
  120 &  54 &  62 &  64 &  64 &  72 &  120 &  63 &  1180 \\
  124 &  56 &  64 &  66 &  64 &  72 &  119 &  64 &  1296 \\
  140 &  64 &  72 &  74 &  64 &  71 &  120 &  64 &  1840 \\
  202 &  96 &  103 &  104 &  64 &  67 &  128 &  60 &  5396 \\
  212 &  96 &  110 &  112 &  256 &  297 &  640 &  86 &  1860 \\
  212 &  96 &  110 &  112 &  512 &  649 &  896 &  502 &  1090 \\
  240 &  110 &  122 &  128 &  256 &  288 &  480 &  255 &  2450 \\
\end{array}
\]
\end{table}

\subsection{Feasible parameters for projective ternary three-weight codes with $\mathbf{w_1+w_2+w_3=2n}$}
\label{subsec_feasible_q_3}

In Table~\ref{table:q3} we list the admissible parameters for projective ternary three-weight codes with $w_1+w_2+w_3=2n$.
For each length $3\leq n\le 39$ we list the admissible dimensions $k$ and weight triples $(w_1,w_2,w_3)$, and
the weight distribution in the form $(A_{w_1},A_{w_2},A_{w_3})$.
The last column contains known results about the existence of codes with these parameters.
For some cases we can also state the number of isomorphism types of those codes.
If not mentioned otherwise, the classification
results are obtained with the software package \texttt{LinCode} \cite{bouyukliev2021computer}.
For the parameters marked with $\ge 1$ we constructed at least one code by prescribing an automorphism group, see~\cite{bkw2005}.

We also list those non-existence results where more sophisticated methods are necessary.
We mark the non-existence results with the keyword {\lq\lq}\textbf{None}{\rq\rq} in the comment column of Table~\ref{table:q3} and give a reference to the used method.

For $n=27$, $k=6$, and weight triple $(9, 18, 27)$ we have used exhaustive enumeration
using \texttt{LinCode} to exclude the existence of the corresponding code.
It would be nice to also have a theoretical argument.
For $36 \leq n\leq 39$ four cases remain undecided, which we mark with the keyword {\lq\lq}\textbf{Open}{\rq\rq}.
For each feasible case we give a suitable generator matrix in Appendix~\ref{appendix_generator_matrices}.

  \begin{center}
    \begin{longtable}[c]{lllll}
    \caption{Admissible and realizable parameters of ternary projective three-weight codes}\label{table:q3}\\
    $n$ & $k$ & $(w_1,w_2,w_3)$ & $(A_{w_1},A_{w_2},A_{w_3})$ & isomorphism types \\ \hline
    3 & $3$ & $(1,2, 3)$ & $(6,12,8)$ & 1 \\
    6 & $3$ & $(3, 4, 5)$ & $(8,6,12)$ & 1 \\
    9 & $3$ & $(5, 6, 7)$ & $(6,8,12)$ & 1 \\
    9 & $4$ & $(3, 6, 9)$ & $(6,66,8)$ & 1 \\
    18 & $4$ & $(9, 12, 15)$ & $(8,60,12)$ & 4 \\
    18 & $5$ & $(9, 12, 15)$ & $(44,150,48)$ & 213 \\
    18 & $6$ & $(9, 12, 15)$ & $(152,420,156)$ & 52 \\
    27 & $4$ & $(15, 18, 21)$ & $(6,62,12)$ & 2 \\
    27 & $5$ & $(15, 18, 21)$ & $(60,116,66)$ & $\ge$ 2\,695\,546 \\
    27 & $6$ & $(15, 18, 21)$ & $(222,278,228)$ & 6 \\
    27 & $5$ & $(9, 18, 27)$ & $(6,228,8)$ & 1  \\
    27 & $6$ & $(9, 18, 27)$ & $(24,678,26)$ & \textbf{None} exhaustive enumeration\\
    36 & $5$ & $(21, 24, 27)$ & $(72,90,80)$ & $\ge$ 1\\
    36 & $6$ & $(21, 24, 27)$ & $(288,144,296)$ & $\ge$ 1\\
    36 & $7$ & $(21, 24, 27)$ & $(936,306,944)$ & \textbf{Open}\\
    39 & $5$ & $(21, 27, 30)$ & $(42,188,12)$ & \textbf{Open}\\
    39 & $6$ & $(21, 27, 30)$ & $(156,494,78)$ & \textbf{Open}\\
    39 & $7$ & $(21, 27, 30)$ & $(498,1412,276)$ & \textbf{Open}\\
    \end{longtable}
  \end{center}

  Similar to Conjecture~\ref{conj_w2_n}, the numerical data suggests the conjecture $w_2 = \frac{2}{3} n$.
  Based on our computational data, we dare to state the following $q$-ary version of Conjecture~\ref{conj_w2_n}.

\begin{conjecture}
  \label{conj_gen}
  Let $C$ be a projective $[n,k]_q$ three-weight code with non-zero weights $w_1<w_2<w_3$ satisfying $w_1+w_2+w_3=3 (1 - \frac{1}{q}) n$.
  Then $w_2=(1 - \frac{1}{q}) n$.
  Moreover, $w_1 = w_2 - t$ and $w_3 = w_2 + t$, where $t$ is a power of the characteristic $p$ of $\mathbb{F}_q$.
\end{conjecture}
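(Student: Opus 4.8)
\noindent The plan is to pass to the eigenvalues $\theta_i:=n(q-1)-qw_i$ $(i=1,2,3)$ of the coset graph $\Gamma_{C^\perp}$, which by Theorem~\ref{thm_eigenvalue_formula} has exactly the four distinct eigenvalues $K:=n(q-1)>\theta_1>\theta_2>\theta_3$. By Theorem~\ref{thm_eigenvalue_formula} together with the identity of Lemma~\ref{lem:reltheta} for $\SWRGparam=3$, the hypothesis $w_1+w_2+w_3=3(1-\tfrac1q)n$ is equivalent to $\theta_1+\theta_2+\theta_3=0$; the first claimed conclusion is $\theta_2=0$; and once $\theta_2=0$ is known, $\theta_1=-\theta_3$ is forced, hence $w_1=w_2-t$ and $w_3=w_2+t$ with $t:=w_2-w_1\in\Z_{\ge1}$ (note $q\mid n$ as soon as $\theta_2=0$), so that the remaining conclusion is exactly ``$t$ is a power of $p$''. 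We would treat the implication $\theta_2=0\Rightarrow t=p^j$ first, as it is short, and then turn to the harder statement $\theta_2=0$.

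\medskip\noindent\emph{The shape of $t$.} Suppose $\theta_2=0$. Since the weight-one vectors generate $\F_q^n$ modulo $C^\perp$, the graph $\Gamma_{C^\perp}$ is connected, so $K$ is a simple eigenvalue; write $1,f_1,f_2,f_3$ for the multiplicities of $K,\theta_1,\theta_2,\theta_3$ (in fact $f_i=A_{w_i}$, by the character computation underlying Theorem~\ref{thm_eigenvalue_formula}, but we only need $f_1,f_3\in\Z_{\ge0}$). The adjacency matrix $A$ of $\Gamma_{C^\perp}$ has zero diagonal, so $\operatorname{tr}(A)=0$, i.e.\ $K+f_1\theta_1+f_3\theta_3=0$. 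With $\theta_1=qt$, $\theta_3=-qt$ and $K=qw_2$ this becomes $f_1-f_3=-w_2/t$, forcing $t\mid w_2$ and hence $t\mid\gcd(w_1,w_2,w_3)$. As $C$ is $\gcd(w_1,w_2,w_3)$-divisible and projective, Ward's theorem \cite{ward1981divisible} (a full-length $\Delta$-divisible code is a $\Delta/\gcd(\Delta,q^{k-1})$-fold repetition, and a proper repetition is not projective) gives $\gcd(w_1,w_2,w_3)\mid q^{k-1}$; therefore $t\mid q^{k-1}$, and $t$ is a power of $p$. This also reproduces the weight shape of the extremal codes in Theorem~\ref{thm:4Delta}.

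\medskip\noindent\emph{The value of $w_2$.} This is the crux, and here we only have a strategy. For general eigenvalues, writing $1,f_1,f_2,f_3$ for the multiplicities of $K,\theta_1,\theta_2,\theta_3$, the three relations $\sum f_i=q^k-1$, $K+\sum f_i\theta_i=0$ and $K^2+\sum f_i\theta_i^2=q^kK$ determine the $f_i=A_{w_i}$, and requiring that these together with all $B_\nu$ from the MacWilliams/Pless identities (cf.\ \eqref{eq_A1}--\eqref{eq_B3} and their analogues for $\nu\ge4$) be non-negative integers is the linear-programming bound; Table~\ref{tbl:counterex} shows it does not, even when refined by Lemma~\ref{lem:div_one_more}, force $\theta_2=0$. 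The additional input we would use is divisibility, in the spirit of Section~\ref{sec_divisibility}: one expects the hypothesis to force larger and larger powers of $p$ to divide $n$ and all of $w_1,w_2,w_3$ as $k$ grows, so that, pushed far enough, it either collides with Ward's repetition theorem (contradicting projectivity) or pins $C$ down to one of finitely many extremal families, in each of which $\theta_2=0$ is checked directly. Carrying this out would require (a)~generalising Lemma~\ref{lem:divisibility_2}, Corollary~\ref{cor_n_divisible_by_4} and Theorem~\ref{thm:4Delta} from $q=2$ to arbitrary $q$ by the same geometric projection technique, and (b)~the real obstacle: improving the divisibility exponent to a bound that is uniform in $k$. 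The projection induction behind Theorem~\ref{thm:4Delta} does not obviously terminate in the asymmetric case $\theta_2\ne0$, where the weights are genuinely spread out rather than in arithmetic progression; a minor extra nuisance is that the sub-cases $w_2>(1-\tfrac1q)n$ and $w_2<(1-\tfrac1q)n$ are not interchanged by the anticode construction (whose weight sum misses the TSS equation by $3/q$), so both must be handled separately. Because of obstacle (b) we state the result only as a conjecture, and record in Table~\ref{tbl:counterex} precisely the parameters that a complete proof would still have to rule out.
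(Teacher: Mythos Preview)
The statement is a \emph{conjecture}; the paper does not prove it either, so there is no ``paper's proof'' to compare against. Your proposal correctly separates the two assertions and treats the first one, $w_2=(1-\tfrac1q)n$, as genuinely open, which matches the paper's position (and you rightly point to Table~\ref{tbl:counterex} as the evidence that the MacWilliams/LP conditions together with Lemma~\ref{lem:div_one_more} are not enough).

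Your argument for the conditional implication ``$w_2=(1-\tfrac1q)n\Rightarrow t$ is a $p$-power'' is correct and is slightly more general than what the paper records. The paper only states this implication for $q=2$ (the sentence after Conjecture~\ref{conj_gen}, invoking Lemma~\ref{lem:t_divisibility}); there the relation $A_{w_3}-A_{w_1}=n/(2t)$ is read off from the explicit formulas~\eqref{eq:A1_pmt}--\eqref{eq:A3_pmt}. Your route via $\operatorname{tr}(A)=0$ on $\Gamma_{C^\perp}$ gives the same identity $f_3-f_1=w_2/t$ for arbitrary $q$ without writing down the $A_{w_i}$, and then Ward's theorem (used exactly as in Section~\ref{sec_feasible_parameters}) finishes. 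In fact $t\mid w_2$ forces $\gcd(w_1,w_2,w_3)=\gcd(w_2,t)=t$, so $t$ is precisely the divisibility constant, not merely a divisor of it---this is also how Lemma~\ref{lem:t_divisibility} phrases it for $q=2$.

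Your strategic remarks on the hard part are reasonable but, as you say, do not constitute a proof; the paper likewise offers no argument and leaves Conjecture~\ref{conj_gen} open.
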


For $q=2$, Conjecture~\ref{conj_gen} follows from Conjecture~\ref{conj_w2_n} by Lemma~\ref{lem:t_divisibility}.
We further remark that the precondition $w_1 + w_2 + w_3 = 3 (1 - \frac{1}{q}) n$ cannot be dropped, as seen by the binary $[7,4]_2$ Hamming code, which is a three-weight code with weight distribution $(0^1 3^3 4^3 7^1)$.

\section{Plane curves given by the sum of all monomials of given degree}
\label{sec_sum_monomials}

In this section, we present some results on rational (or integral) solutions
of the equation
\begin{equation} \label{E:curve}
  \sum_{h+i+j = \SWRGparam-2} \theta_1^h \theta_2^i \theta_3^j = 0 \,,
\end{equation}
which for pairwise distinct $\theta_1, \theta_2, \theta_3$ is equivalent
to~\eqref{eq_diophantine_vandam} by Lemma~\ref{lem:reltheta}. We restrict
to the case that $\SWRGparam$ is odd. When $\SWRGparam$ is even, then
there are no nontrivial real solutions, so \emph{a fortiori} no rational solutions.

We denote by $C_{\SWRGparam-2}$ the plane projective curve defined by~\eqref{E:curve},
and we will rename the variables $\theta_1, \theta_2, \theta_3$ in this section
as $x, y, z$. As already mentioned, $C_1$ is the line $x + y + z = 0$, and there
are many rational points on this curve. In general, it is not hard to see
that $C_d$ is smooth over~$\Q$, so the curve is in particular geometrically
irreducible and has genus $g(C_d) = (d-1)(d-2)/2$.

For $d = 3$ (corresponding to $\SWRGparam = 5$), $C_3$ is a curve of genus~$1$
with some rational points, so it is an elliptic curve. A standard procedure
(implemented, for example, in Magma~\cite{Magma}) produces an isomorphic curve
in Weierstrass form. It turns out that $C_3$ is isomorphic to the curve with
label~$50a1$ in the Cremona database
(\href{http://www.lmfdb.org/EllipticCurve/Q/50/a/3}{$50.a3$} in the LMFDB \cite{lmfdb:50.a3}).
In Cremona's tables or under the link above, one can check that this curve has
exactly three rational points. This proves the following.

\begin{lemma} \label{lem:d=3}
  \[ C_3(\Q) = \{(1:-1:0), (-1:0:1), (0:1:-1)\} \,. \]
\end{lemma}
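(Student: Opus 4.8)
The plan is to reduce the determination of $C_3(\Q)$ to the computation of the Mordell--Weil group of an elliptic curve. The curve $C_3$ is the plane cubic defined by $x^2y + x^2z + xy^2 + xyz + xz^2 + y^2z + yz^2 = 0$ (the sum of all degree-$3$ monomials in $x,y,z$), and since it visibly contains the three rational points $(1:-1:0)$, $(-1:0:1)$, $(0:1:-1)$, it is a smooth plane cubic of genus $1$ with a rational point, hence an elliptic curve over~$\Q$.

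First I would verify smoothness of $C_3$ over $\Q$ (equivalently over $\overline{\Q}$), for instance by checking that the partial derivatives have no common projective zero, so that the genus formula $g = (d-1)(d-2)/2$ with $d=3$ gives genus $1$; this also confirms there are no singular rational points to worry about. Next I would put the curve into Weierstrass form: choosing one of the three rational points (say $(0:1:-1)$) as the origin, one applies the standard transformation from a plane cubic with a rational flex or rational point to its Jacobian in Weierstrass coordinates. This is a routine computation — I would invoke the implementation in Magma's \texttt{EllipticCurve} machinery, which takes a genus-$1$ curve with a rational point and returns an isomorphic Weierstrass model together with the explicit isomorphism.

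Then I would identify the resulting elliptic curve in a standard database. The claim is that it is the curve $50a1$ in Cremona's tables (LMFDB label $50.a3$ via an isogeny/isomorphism). Identification can be done by computing the conductor and the first few $a_p$ (or simply by letting Magma/LMFDB match the isomorphism class), and then one reads off from Cremona's tables (or the LMFDB page) that this curve has trivial Mordell--Weil rank and torsion subgroup of order exactly $3$, so it has precisely three rational points. Pulling these back through the explicit isomorphism to $C_3$ shows that $C_3(\Q)$ consists of exactly the three points listed, and since all three are among the obvious solutions $(1:-1:0)$, $(-1:0:1)$, $(0:1:-1)$, the lemma follows.

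The only real obstacle here is the reliability of the rank-zero assertion for $50a1$: one is appealing to a database computation (or, equivalently, to a direct descent plus the fact that the rank is provably $0$ because the curve has analytic rank $0$ and the analytic-rank-zero case of BSD is a theorem by Kolyvagin--Gross--Zagier). For a curve of conductor $50$ this is completely standard and unconditional, so in practice this step is immediate; the bookkeeping of tracking the explicit isomorphism between $C_3$ and the Weierstrass model — to be sure which three points on $C_3$ correspond to the three rational points on $50a1$ — is the part that needs a little care, but it is routine once the isomorphism is written down.
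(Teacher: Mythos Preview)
Your approach is essentially identical to the paper's: put the genus-$1$ curve into Weierstrass form via Magma, identify it as Cremona's $50a1$ (LMFDB $50.a3$), and read off from the database that it has exactly three rational points.

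There is, however, a slip in your equation for~$C_3$. The curve $C_3$ is defined by the sum of \emph{all} monomials of degree~$3$, so its equation is
\[
  x^3 + y^3 + z^3 + x^2y + x^2z + xy^2 + y^2z + xz^2 + yz^2 + xyz = 0\,,
\]
whereas you wrote only the seven mixed monomials and omitted $x^3$, $y^3$, $z^3$. The three obvious points happen to lie on both cubics, so the error is easy to miss, but the curve you wrote down is not $C_3$ and would not reduce to $50a1$. Once the equation is corrected, your outline goes through exactly as the paper does it.
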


The curve~$C_5$ is a plane quintic of genus~$6$. Note that there is an action
of the symmetric group~$S_3$ on three letters on every curve~$C_d$ by permuting
the coordinates. We can restrict this action to an action of the subgroup~$A_3$
generated by a cyclic permutation. The quotient~$C'_5$ of~$C_5$ by this action of~$A_3$
is a curve of genus~$2$. We can compute a singular plane model of~$C'_5$
by taking the image of~$C_5$ under the map
\[ \BP^2 \to \BP^2\,,\quad
   (x:y:z) \mapsto (xyz : (xy+yz+zx)(x+y+z) : (x-y)(y-z)(z-y)) \,.
\]
A procedure implemented in Magma~\cite{Magma} then produces the hyperelliptic equation
\[ H_5 \colon y^2 = -3 x^6 + 8 x^5 - 28 x^4 - 30 x^3 + 40 x^2 + 16 x - 15 \]
for~$C'_5$.
A 2-descent as described in~\cite{Stoll2001} (and implemented in Magma)
shows that the Mordell-Weil rank of the Jacobian~$J$ of~$H_5$ is at
most~$1$. Since one finds a point on~$J$ of infinite order (with Mumford
representation $(x^2-x+2, 7x+7)$), the rank is indeed~$1$.
Using the Magma implementation of Chabauty's method combined with the
Mordell-Weil sieve (see~\cite{BruinStoll2010}), one quickly finds that
the only rational point on this hyperelliptic curve is~$(-1,0)$. This point must
be the image of the three obvious rational points on~$C_5$. Since any
other rational point would have to map to a different point on~$H_5$,
this proves the following.

\begin{lemma} \label{lem:d=5}
  \[ C_5(\Q) = \{(1:-1:0), (-1:0:1), (0:1:-1)\} \,. \]
\end{lemma}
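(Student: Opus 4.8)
The plan is to reduce the Diophantine question for $s=7$ (i.e.\ $d=s-2=5$) to the determination of rational points on a curve of small genus, exactly as was done for $s=5$ in Lemma~\ref{lem:d=5}. The curve $C_5$ itself is a smooth plane quintic of genus $6$, which is too large to handle directly by Chabauty-type methods; the key idea is to exploit the $S_3$-action on $C_d$ given by permuting the coordinates $x,y,z$. First I would pass to the quotient $C'_5 = C_5/A_3$ by the cyclic subgroup of order $3$. Since the genus drops from $6$ to $2$, this quotient is a hyperelliptic curve, and I would make the quotient map explicit using the ring of invariants of the cyclic group generated by $(x:y:z)\mapsto(y:z:x)$: the functions $xyz$, $(xy+yz+zx)(x+y+z)$, and $(x-y)(y-z)(z-x)$ are a convenient generating set, the first two being symmetric and the last one anti-invariant under the full $S_3$, so that it serves as the hyperelliptic ``$y$''-coordinate while a ratio of the first two serves as the ``$x$''-coordinate. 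Feeding the image of $C_5$ under this map into Magma's model-simplification routines produces the stated equation $H_5\colon y^2 = -3x^6 + 8x^5 - 28x^4 - 30x^3 + 40x^2 + 16x - 15$ for $C'_5$.

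Next I would determine $C'_5(\Q)$ using Chabauty's method. This requires first controlling the Mordell--Weil rank of the Jacobian $J$ of $H_5$. I would run a $2$-descent (as in~\cite{Stoll2001}) to get the upper bound $\operatorname{rank} J(\Q) \le 1$, then exhibit an explicit point of infinite order on $J$ — for instance the divisor class with Mumford representation $(x^2-x+2,\,7x+7)$ — to conclude that the rank is exactly $1$. Since the rank ($1$) is strictly less than the genus ($2$), Chabauty's method applies; combining it with the Mordell--Weil sieve (following~\cite{BruinStoll2010}, as implemented in Magma) should pin down $C'_5(\Q)$ completely, and the expectation is that it consists of the single point $(-1,0)$, which is necessarily the common image of the three ``obvious'' points $(1:-1:0)$, $(-1:0:1)$, $(0:1:-1)$ of $C_5$.

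Finally I would pull the conclusion back up to $C_5$. The quotient map $C_5 \to C'_5$ is $3$-to-$1$ on geometric points, and the three obvious rational points of $C_5$ form a single orbit under $A_3$, hence map to one point of $C'_5$. Any hypothetical further rational point $P \in C_5(\Q)$ would lie in an $A_3$-orbit of rational points (the orbit of a rational point under an action defined over $\Q$ is Galois-stable, and here the three points in the orbit are individually rational since $A_3$ acts by coordinate permutations over $\Q$); if $P$ is not one of the three obvious points, its orbit maps to a rational point of $C'_5$ distinct from $(-1,0)$, contradicting $C'_5(\Q) = \{(-1,0)\}$. This yields $C_5(\Q) = \{(1:-1:0),(-1:0:1),(0:1:-1)\}$, which by Lemma~\ref{lem:reltheta} is exactly the statement that for $s=7$ the only rational solutions of~\eqref{eq_diophantine_vandam} are the parametric family $\theta_2 = 0$, $\theta_3 = -\theta_1$ (together with the degenerate cases where two of the $\theta_i$ coincide, which are excluded by Theorem~\ref{thm_eigenvalue_formula}).

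The main obstacle I anticipate is the rank computation and the successful termination of the Chabauty--Mordell--Weil-sieve combination: the $2$-descent gives only an upper bound, so one genuinely needs a point of infinite order to force $\operatorname{rank} J(\Q) = 1$, and even with rank $1 <$ genus $2$ the plain Chabauty argument can leave finitely many ambiguous residue classes that must be killed by the sieve, which occasionally requires a careful or lucky choice of auxiliary primes. A secondary subtlety is verifying that the explicit degree-$3$ quotient map has no unexpected base points or extra components on $C_5$, and that no rational point of $C_5$ is ramified in the quotient in a way that would make the orbit-counting argument break down — but since $C_5$ is smooth and the three obvious points are an honest free $A_3$-orbit, this should be routine.
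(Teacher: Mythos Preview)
Your proposal is correct and follows essentially the same route as the paper: quotient by the $A_3$-action via the invariants $xyz$, $(xy+yz+zx)(x+y+z)$, $(x-y)(y-z)(z-x)$ to reach the genus-$2$ curve $H_5$, a $2$-descent plus the explicit point $(x^2-x+2,7x+7)$ to pin the rank at~$1$, Chabauty combined with the Mordell--Weil sieve to get $H_5(\Q)=\{(-1,0)\}$, and the orbit pull-back to $C_5$. (Your cross-reference to the $s=5$ case should point to Lemma~\ref{lem:d=3}, not Lemma~\ref{lem:d=5}.)
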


The combination of Lemma~\ref{lem:d=3} and Lemma~\ref{lem:d=5} with Lemma~\ref{lem:vandam_omidi} leads to the following theorem.

\begin{theorem}
Let $\Gamma$ be a $k$-regular graph with four distinct eigenvalues $k>\theta_1>\theta_2>\theta_3$ and let $\SWRGparam\in\{5,7\}$.
Then $\Gamma$ is an $\SWRGparam$-SWRG if and only if $\theta_2=0$ and $\theta_3=-\theta_1$.
\end{theorem}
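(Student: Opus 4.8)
The plan is to derive the theorem as a direct corollary of the three results already established in the paper. By Lemma~\ref{lem:vandam_omidi}, for a $k$-regular graph with four distinct eigenvalues $k > \theta_1 > \theta_2 > \theta_3$ and $\SWRGparam \ge 3$, being an $\SWRGparam$-SWRG is equivalent to Equation~\eqref{eq_diophantine_vandam}. Since the three eigenvalues are pairwise distinct, Lemma~\ref{lem:reltheta} shows that \eqref{eq_diophantine_vandam} holds if and only if the factor $\sum_{h+i+j=\SWRGparam-2}\theta_1^h\theta_2^i\theta_3^j$ vanishes, i.e.\ if and only if $(\theta_1,\theta_2,\theta_3)$ is a rational point on the projective curve $C_{\SWRGparam-2}$.

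First I would treat $\SWRGparam = 5$. Here $\SWRGparam - 2 = 3$, and Lemma~\ref{lem:d=3} tells us that $C_3(\Q) = \{(1:-1:0),(-1:0:1),(0:1:-1)\}$. Among these three projective points, the only one giving a strictly decreasing triple $\theta_1 > \theta_2 > \theta_3$ after scaling is $(1:0:-1)$, i.e.\ $(\theta_1,\theta_2,\theta_3) = (\lambda, 0, -\lambda)$ for some $\lambda > 0$; the other two points have a repeated coordinate pattern that, up to scaling and ordering, is the same solution. Thus $\theta_2 = 0$ and $\theta_3 = -\theta_1$. Conversely, if $\theta_2 = 0$ and $\theta_3 = -\theta_1$, then for odd $\SWRGparam$ Equation~\eqref{eq_diophantine_vandam} is satisfied automatically (as already remarked after Lemma~\ref{lem:vandam_omidi}), so $\Gamma$ is an $\SWRGparam$-SWRG. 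For $\SWRGparam = 7$, the argument is identical, using Lemma~\ref{lem:d=5} in place of Lemma~\ref{lem:d=3}: $\SWRGparam - 2 = 5$, and $C_5(\Q)$ consists of exactly the same three projective points.

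The only point requiring slight care is the translation between \emph{projective} points on $C_{\SWRGparam-2}$ and \emph{ordered triples of distinct real eigenvalues}. A projective point $(a:b:c)$ corresponds to eigenvalue triples $(\lambda a, \lambda b, \lambda c)$ for $\lambda \in \R^*$; the requirement $\theta_1 > \theta_2 > \theta_3$ forces a unique representative (after also noting that none of the three listed points lies on a coordinate hyperplane in a way that would collapse two eigenvalues — in each of $(1:-1:0)$, $(-1:0:1)$, $(0:1:-1)$ the three coordinates are distinct). Checking that all three projective points yield, up to scaling and reordering, precisely the locus $\{\theta_2 = 0,\ \theta_3 = -\theta_1\}$ is a one-line verification. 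I do not anticipate any genuine obstacle here: the deep content is entirely contained in Lemmas~\ref{lem:d=3} and~\ref{lem:d=5}, whose proofs invoke the rank computation on the elliptic curve $50a1$ and the Chabauty--Mordell-Weil-sieve computation on the genus-$2$ curve $H_5$; the theorem itself is just the packaging of these into graph-theoretic language via Lemmas~\ref{lem:vandam_omidi} and~\ref{lem:reltheta}.
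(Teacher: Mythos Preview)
Your proposal is correct and follows exactly the paper's approach: the paper's proof is the single sentence that the theorem follows by combining Lemma~\ref{lem:vandam_omidi} with Lemmas~\ref{lem:d=3} and~\ref{lem:d=5}, and you have simply spelled this out, additionally invoking Lemma~\ref{lem:reltheta} for the factorization step that the paper leaves implicit. Your discussion of how the three listed projective points on $C_{\SWRGparam-2}$ reduce, under the ordering $\theta_1 > \theta_2 > \theta_3$, to the single family $\theta_2 = 0$, $\theta_3 = -\theta_1$ is more detailed than the paper's one-line proof but matches it entirely.
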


Considering larger odd~$d$, we can say the following. The quotient~$C''_7$
of~$C_7$ by the full $S_3$-action is an elliptic curve, which is isomorphic
to the curve with label~$10368w1$ in the Cremona database
(\href{http://www.lmfdb.org/EllipticCurve/Q/10368/j/1}{$10368.j1$}
in the LMFDB \cite{LMFDB:10368.j1}). Unfortunately, this curve has rank~$2$ and therefore
has infinitely many rational points. So we cannot use this approach
to determine the set of rational points on~$C_7$.

The quotient $C''_9$ of~$C_9$ by the $S_3$-action is a smooth plane quartic curve, isomorphic
to the curve with equation
\begin{align*}
  x^4 &+ 2 x^3 y + x^2 y^2 - x y^3 - y^4 + 2 x^3 z - 4 x^2 y z - 3 x y^2 z \\
      &+ 2 y^3 z + 4 x^2 z^2 - 3 x y z^2 + 3 y^2 z^2 + 3 x z^3 - 4 y z^3 + z^4 = 0 \,.
\end{align*}
A point search finds the two rational points $(-5:1:4)$ and $(-1:1:0)$.
The first is the image of the three obvious rational points on~$C_9$,
whereas the second point does not lift to a rational point on~$C_9$.
Let $J$ be the Jacobian of the curve. Then $\#J(\F_3) = 3^3$
and $\#J(\F_7) = 11 \cdot 31$, so $J(\Q)$ has trivial torsion subgroup.
Therefore, the point in~$J(\Q)$ given by the difference of the two
rational points has infinite order. It might be possible to use the
methods of~\cite{BPS2016} to determine the rank of~$J(\Q)$.
If the rank turns out to be $\le 2$, then an application of Chabauty's
method might show that the two known rational points are the only ones.

In any case, searching for rational points does not exhibit any other points
than the obvious ones when $d \ge 3$ is odd. This leads to the following conjecture,
which generalizes the results of Lemma~\ref{lem:d=3} and~\ref{lem:d=5}.

\begin{conjecture}
  If $d \ge 3$ is odd, then
  \[ C_d(\Q) = \{(1:-1:0), (-1:0:1), (0:1:-1)\} \,. \]
  Equivalently, all solutions $(\theta_1, \theta_2, \theta_3)$ in integers
  of~\eqref{eq_diophantine_vandam} with $\SWRGparam \ge 5$ odd and
  $\theta_1 > \theta_2 > \theta_3$ satisfy $\theta_2 = 0$ and $\theta_3 = -\theta_1$.
\end{conjecture}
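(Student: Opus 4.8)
\noindent\emph{A possible line of attack towards the conjecture.}
The plan is to exploit the $S_3$-action on $C_d$ (permutation of $x,y,z$) in order to trade $C_d$, whose genus $(d-1)(d-2)/2$ grows quadratically, for the much smaller quotient curves $C_d' = C_d/A_3$ and $C_d'' = C_d/S_3$, and then to combine an exact determination of the rational points of these quotients with a lifting argument along the tower $C_d \to C_d' \to C_d''$. The three obvious points $(1:-1:0)$, $(-1:0:1)$, $(0:1:-1)$ are cyclically permuted by $A_3$, and each is fixed by the transposition swapping its two nonzero coordinates, so they form a single $S_3$-orbit and collapse to one rational point on $C_d''$; conversely a hypothetical fourth rational point of $C_d$ would produce either a ``new'' rational point on $C_d''$ or an already known one carrying an unexpected rational preimage. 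This is precisely the mechanism behind Lemma~\ref{lem:d=3} (where $C_3$ is itself an elliptic curve of rank~$0$, so no quotient is needed) and Lemma~\ref{lem:d=5} (where the genus-$2$ curve $C_5'$ does the job).

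For a fixed odd $d$ I would proceed as follows. First, produce an explicit model of $C_d''$ from the invariant theory of $S_3$ on $\BP^2$: the invariant ring is generated by the elementary symmetric functions of $x,y,z$ together with the discriminant $(x-y)^2(y-z)^2(z-x)^2$, and the $S_3$-stable curve $C_d$ descends to a plane curve whose normalisation is $C_d''$ (realised as the genus-$2$ curve $H_5$ for $d=5$, as the elliptic curve $10368.\mathrm{j}1$ for $d=7$, and as the genus-$3$ plane quartic displayed above for $d=9$). Second, bound the Mordell--Weil rank of $\operatorname{Jac}(C_d'')$ by an explicit descent. Third, if that rank is below the genus of $C_d''$, determine $C_d''(\Q)$ exactly by Chabauty--Coleman together with the Mordell--Weil sieve; if the rank equals the genus — as already happens for $d=7$, where $C_d''$ has rank~$2$ — replace this by quadratic or non-abelian Chabauty, or by the elliptic-curve Chabauty method applied to a suitable auxiliary cover. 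Fourth, for each rational point of $C_d''$ run through the finitely many twists in the covering collection of $C_d \to C_d''$ (lifting to $C_d'$ is a quadratic-twist question, lifting further to $C_d$ a cubic-twist one), testing each twist by Chabauty over the relevant number field; the expectation is that in every case only the point coming from the known $S_3$-orbit lifts to $C_d(\Q)$.

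The real obstacle is the absence of any uniform control in~$d$. The genus of $C_d''$ still grows roughly quadratically in~$d$, so the Chabauty inequality (rank of the Jacobian strictly below the genus) need not hold, and it already fails for $d=7$; one has no a priori bound on these ranks, nor any structural reason forcing them to stay small. A genuinely uniform proof would therefore seem to require either a conceptual input of that type, or an effective uniform bound of Mordell type sharp enough to force $\#C_d(\Q)\le 3$ directly. Barring such progress, what the plan realistically delivers is a case-by-case resolution for individual odd~$d$: it closes $d=3$ and $d=5$ as in the excerpt, it should close $d=9$ conditionally on the rank computation indicated there, and the next few values could probably be forced through with enough effort — which is exactly why the statement is offered here only as a conjecture.
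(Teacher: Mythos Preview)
This statement is a \emph{conjecture} in the paper, not a theorem: the paper proves only the cases $d=3$ (Lemma~\ref{lem:d=3}) and $d=5$ (Lemma~\ref{lem:d=5}), discusses $d=7$ and $d=9$ without resolving them, and then records the general statement as open. Your write-up correctly recognises this and offers a strategy rather than a proof.

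Your proposed line of attack coincides with what the paper itself does and suggests: quotient by the $S_3$-action, determine the rational points on the quotient $C_d''$ (or $C_d'$) by descent plus Chabauty--Coleman and the Mordell--Weil sieve, then lift. You reproduce the paper's observations that this succeeds for $d=5$, fails in its naive form for $d=7$ because the elliptic quotient has rank~$2$, and might succeed for $d=9$ if the rank of the Jacobian of the genus-$3$ quotient turns out to be at most~$2$. Your additions beyond the paper --- invoking quadratic or non-abelian Chabauty for the rank-equals-genus cases, and a systematic twist/covering-collection argument for the lifting step --- are reasonable modern refinements, but they do not change the fundamental picture: there is no uniform argument in~$d$, only a case-by-case machine, and you say so explicitly. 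In short, your proposal is an accurate and slightly expanded paraphrase of the paper's own discussion of why the conjecture is stated rather than proved.
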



\section{Divisibility for binary linear codes with few weights}
\label{sec_divisibility}

In this section we want to study the divisibility properties of the weights and the length of the binary linear codes with few weights. A first but
very powerful tool are the MacWilliams identities.
In Equations~(\ref{eq_ppm1proj})-(\ref{eq_ppm4proj}), the code has been assumed to be projective, i.e. $B_2 = 0$.
For the more general situation, we prepare
\begin{eqnarray}
  \sum_{i>0} A_i &=& q^k-1,\label{eq_ppm1}\\
  \sum_{i\ge 0} iA_i &=& q^{k-1}n,\label{eq_ppm2}\\
  \sum_{i\ge 0} i^2A_i &=& q^{k-1}(B_2+n(n+1)/2),\label{eq_ppm3}\\
  \sum_{i\ge 0} i^3A_i &=& q^{k-2}(3(B_2n-B_3)+n^2(n+3)/2)\label{eq_ppm4},
\end{eqnarray}
for an [$n,k]_q$ code $C$ with $B_1=0$.

\begin{lemma}[Folklore]\label{lemma_even_weight_subcode}
  Let $C$ be an $[n,k]_2$ code and $C_2$ the subset of all codewords of even weight.
  Then $C_2$ is a linear subcode of $C$ of dimension $k$ or $k-1$.
\end{lemma}

\begin{proof}
Consider the $\F_2$-linear map $f : C \to \F_2$, $c \mapsto \sum_{i=0}^n c_i$.
Then $C_2 = \ker f$ is a linear subspace of $C$.
By the rank-nullity theorem, the codimension of $C_2$ in $C$ equals $\dim \ker f \in \{0,1\}$.
\end{proof}

We call $C_2$ the \emph{even-weight subcode} of $C$.

A direct consequence of Lemma~\ref{lemma_even_weight_subcode} is the following.

\begin{lemma}
  \label{lemma_swt_odd}
  Let $C$ be an $[n,k]_2$ code of dimension $k \geq 2$.
  Then $C$ has a non-zero even weight.
\end{lemma}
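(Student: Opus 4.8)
The plan is to read off the conclusion directly from Lemma~\ref{lemma_even_weight_subcode}. That lemma says that the even-weight subcode $C_2$ of $C$ is a linear subcode of dimension $k$ or $k-1$. In either case, the hypothesis $k \geq 2$ forces $\dim C_2 \geq k-1 \geq 1$, so $C_2 \neq \{0\}$ and hence contains some nonzero codeword $c$.

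By definition of $C_2$, the weight $w(c)$ is even, and since $c \neq 0$ we have $w(c) > 0$. Thus $w(c)$ is a non-zero even weight of $C$, which is exactly what is claimed. I would spell out the two cases only briefly: if $\dim C_2 = k$ then in fact \emph{every} nonzero codeword of $C$ has positive even weight, and if $\dim C_2 = k-1$ then $C_2$ is a nonzero proper subcode all of whose $2^{k-1}-1 \geq 1$ nonzero codewords have positive even weight.

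There is essentially no obstacle here: the statement is an immediate corollary of the preceding lemma together with the bound $k \geq 2$ (needed only to guarantee $C_2$ is nontrivial; for $k=1$ the repetition-like code with a single odd-weight generator shows the hypothesis cannot be dropped). The only point worth a sentence of care is noting that a nonzero codeword automatically has \emph{strictly positive} weight, so "even weight" in $C_2$ really does yield a "non-zero even weight" of $C$ in the sense of the weight distribution $(A_i)$, i.e.\ some index $i > 0$ with $i$ even and $A_i > 0$.
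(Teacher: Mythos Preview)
Your proof is correct and matches the paper's approach exactly: the paper simply states that the lemma is ``a direct consequence of Lemma~\ref{lemma_even_weight_subcode}'' without giving any further details, and your argument spells out precisely that consequence.
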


\begin{lemma}
	\label{lem:3wt_dim}
	Let $C$ be a linear binary $[n,k]_2$ three-weight code.
	Then $k \geq 2$.
	If $C$ is projective, then $k \geq 3$.
\end{lemma}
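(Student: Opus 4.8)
The plan is to treat the two assertions separately: the bound $k\ge 2$ follows by a trivial codeword count, and the improved bound $k\ge 3$ in the projective case is an immediate consequence of the length restriction already recorded in Lemma~\ref{lem:3wt_length_restriction}.

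First I would dispose of $k\ge 2$. A binary code of dimension $k\le 1$ has at most $2$ codewords, hence at most one nonzero codeword and therefore at most one nonzero weight. This contradicts the assumption that $C$ attains exactly three distinct nonzero weights, so necessarily $k\ge 2$. For the projective case the only thing left to rule out is $k=2$. Here I would apply Lemma~\ref{lem:3wt_length_restriction} with $q=2$ and $k=2$, which gives $n\le \frac{2^2-1}{2-1}-2 = 1$; but an $[n,2]_2$ code must have length $n\ge 2$, a contradiction. Hence $k\ge 3$ whenever $C$ is projective.

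As a self-contained alternative for the projective step (avoiding the appeal to Lemma~\ref{lem:3wt_length_restriction}), one can argue directly: over $\F_2$ projective equivalence of columns is equality, so the columns of a generator matrix of a projective $[n,2]_2$ code are pairwise distinct nonzero vectors of $\F_2^2$, forcing $n\le 3$. For $n=3$ the code is the simplex code $\Sim_2(2)$, which has constant weight $2$ by Lemma~\ref{lemma_1wt}, and for $n\le 2$ it clearly has at most two distinct nonzero weights; in neither case is $C$ a three-weight code.

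There is no real obstacle here: the statement is elementary, and the only point requiring care — that the small-length codes excluded above genuinely fail to be three-weight codes — is exactly the content already made explicit in the proof of Lemma~\ref{lem:3wt_length_restriction} (the simplex code and the punctured simplex/MacDonald code). So the argument reduces to the two short deductions above.
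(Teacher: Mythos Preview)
Your proof is correct and matches the paper's argument: the paper handles $k\le 1$ by the same codeword count, and for the projective $k=2$ case it gives exactly your ``self-contained alternative'' (distinct nonzero columns in $\F_2^2$ force $n\le 3$, and $n=3$ yields $\Sim_2(2)$). Your primary route via Lemma~\ref{lem:3wt_length_restriction} is a legitimate and slightly cleaner shortcut, since that lemma already packages the same column-counting argument.
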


\begin{proof}
	if $k \leq 1$, then $C$ consists of at most a single non-zero codeword, so $C$ cannot have three different weights.

	Assume that $C$ is projective of dimension $k=2$ and let $G$ be a generator matrix of $C$.
	Then $G$ neither has a zero column, nor a repeated column.
	Therefore, each of the $2^k - 1 = 3$ possible column vectors in $\F_2^2\setminus\{\mathbf{0}\}$ appears at most once as a column of $G$, implying that $n \leq 3$.
	As $C$ has three different non-zero weights, $n \geq 3$, so together we get $n = 3$ and each of the $3$ non-zero vectors appears exactly once as a column of $G$.
	Therefore, $C$ is isomorphic to the simplex code $\Sim_2(2)$, which is a constant weight code.
	This is a contradiction.
\end{proof}

\begin{remark}
	There are indeed (many) non-projective binary three-weight codes of dimension $2$.
	An example for the smallest possible length $n = 3$ is given by the generator matrix $(\begin{smallmatrix}1 & 0 & 0 \\ 0 & 1 & 1\end{smallmatrix})$, which spans a code with the weight distribution $(0^1 1^1 2^1 3^1)$.
\end{remark}

\begin{lemma}
  \label{lemma_3wt_neven_1oddweight}
  Let $C$ be a projective full-length $[n,k]_2$ three-weight code with non-zero weights $w_1$, $w_2$ and $w_3$, such that $n$ is even and exactly one weight is odd.
  W.l.o.g.~let $w_2$ be the odd weight.

 Then $w_2=\frac{n}{2}$ and the even-weight subcode $C_2$ of $C$ has effective length $n$ and is a $2$-fold replication of a projective
 $\left[\tfrac{n}{2},k-1\right]_2$ two-weight code
  with non-zero weights $\frac{w_1}{2}$ and $\frac{w_3}{2}$.
\end{lemma}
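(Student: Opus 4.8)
The plan is to work in the geometric description from Section~\ref{sec_preliminaries}. Let $\mathcal{C}$ be a set of $n$ points in $V=\F_2^k$, identified with the corresponding nonzero vectors, and set $s=\sum_{P\in\mathcal{C}}P\in\F_2^k$. For $x\in\F_2^k$ write $H_x=x^\perp$ and let $c_x$ be the associated hyperplane and codeword, so that $w(c_x)=\#\{P\in\mathcal{C}:\langle x,P\rangle=1\}=n-\mathcal{C}(H_x)$ and hence $w(c_x)\equiv\langle x,s\rangle\pmod 2$. Since $w_2$ is odd, not all weights are even, so $s\neq 0$ and the point $P_s=\langle s\rangle$ is well defined. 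The even-weight subcode $C_2$ has dimension $k-1$ by Lemma~\ref{lemma_even_weight_subcode} (the map $c\mapsto\sum_ic_i$ is onto because $w_2$ is odd); its nonzero weights lie in $\{w_1,w_3\}$ and both values occur, so $C_2$ is a two-weight code with weights $w_1$ and $w_3$. Finally, $c_x\in C_2$ exactly when $P_s\in H_x$, so by the correspondence between codimension-one subcodes and projections, $C_2$ corresponds to the point multiset $\pi_{P_s}(\mathcal{C})$.

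The crucial observation is that whenever $\langle x,s\rangle=1$, the weight $w(c_x)$ is odd and nonzero, hence equal to $w_2$. Therefore $\sum_{P\in\mathcal{C}}(-1)^{\langle x,P\rangle}=n-2w(c_x)=n-2w_2$ is \emph{the same} for all $x$ in the affine hyperplane $A=\{x:\langle x,s\rangle=1\}$, which has $2^{k-1}$ elements and does not contain $0$. Summing this over $x\in A$ and exchanging the order of summation, the inner character sum $\sum_{x\in A}(-1)^{\langle x,P\rangle}$ vanishes unless $P\in\{0,s\}$, where it equals $2^{k-1}$ and $-2^{k-1}$ respectively; this gives $2^{k-1}(n-2w_2)=-2^{k-1}\,\mathcal{C}(P_s)$, i.e.\ $n-2w_2=-\mathcal{C}(P_s)\in\{0,-1\}$. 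As $n$ is even, $n-2w_2$ is even, which forces $n-2w_2=0$; hence $w_2=\tfrac{n}{2}$ and, simultaneously, $\mathcal{C}(P_s)=0$, i.e.\ $P_s\notin\mathcal{C}$.

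It then remains to read off the structure of $C_2$. Since $P_s\notin\mathcal{C}$, we have $\#\pi_{P_s}(\mathcal{C})=\#\mathcal{C}-\mathcal{C}(P_s)=n$, so $C_2$ has effective length $n$. The equality $n-2w_2=0$ says that $\sum_{P\in\mathcal{C}}(-1)^{\langle x,P\rangle}=0$ for every $x\in A$; by Fourier inversion this means the indicator of $\mathcal{C}$ on $\F_2^k$ (with value $0$ at the origin) is invariant under translation by $s$, i.e.\ on every line $\{P_s,Q_1,Q_2\}$ through $P_s$ one has $Q_1\in\mathcal{C}\iff Q_2\in\mathcal{C}$. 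Consequently every point of $\PG(V/P_s)\cong\PG(k-2,2)$ has multiplicity $0$ or $2$ in $\pi_{P_s}(\mathcal{C})$ (it is at most $2$ because $\mathcal{C}$ is a set), so $\pi_{P_s}(\mathcal{C})=2\cdot\mathcal{C}_0$ for a proper set $\mathcal{C}_0$ of $\tfrac{n}{2}$ points. The associated code $C_0$ is projective of length $\tfrac{n}{2}$ and dimension $k-1$ (dimension is preserved under replication), its weights are half those of $C_2$, namely $\tfrac{w_1}{2}$ and $\tfrac{w_3}{2}$, and $C_2$ is the $2$-fold replication of $C_0$, which is precisely the claim.

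The only genuine obstacle is the passage from ``$C_2$ is a two-weight code and $w(c_x)=w_2$ on all of $A$'' to the exact value $w_2=n/2$: this hinges entirely on the parity of $n$, which is what rules out the alternative $n-2w_2=-1$. The remaining ingredients — identifying $C_2$ with $\pi_{P_s}(\mathcal{C})$, evaluating the inner character sums, and bounding the multiplicities by $2$ — are routine.
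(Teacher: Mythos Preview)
Your proof is correct and takes a genuinely different route from the paper's argument. The paper works purely with the power moments: it uses $A_{w_2}=2^{k-1}$ and the second power moment for $C$ and for (the full-length version of) $C_2$ to show $n'=2(n-w_2)$ is even, forcing $n'=n$ and $w_2=n/2$; then the third power moment yields $B'_2=n/2$, and Lemma~\ref{lem:col_mult_B2} converts this into the statement that every column multiplicity of $C_2$ equals~$2$. You instead single out the parity vector $s=\sum_{P\in\mathcal{C}}P$, observe that all codewords on the affine hyperplane $\{x:\langle x,s\rangle=1\}$ have the common weight~$w_2$, and evaluate the resulting character sum to get $n-2w_2=-\mathcal{C}(P_s)\in\{0,-1\}$; parity of~$n$ then pins down $w_2=n/2$ and $P_s\notin\mathcal{C}$ simultaneously, while Fourier inversion turns the vanishing of all those character values directly into translation invariance of $\mathcal{C}$ by~$s$, which \emph{is} the $2$-fold replication after projection from $P_s$. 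Your argument is more conceptual and yields the structural conclusion in one stroke, bypassing the computation of $B'_2$; the paper's approach is more elementary in the sense that it stays entirely within the MacWilliams framework already set up, and makes the dependence on $k\ge 3$ (via $y\in\Z$) explicit, which your character-sum identity absorbs automatically.
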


\begin{proof}
  Let $A_{w_i}$ be the number of codewords of weight $w_i$ in $C$.
  Furthermore, let $(B_i)$ be the dual weight distribution of $C$ and $(B'_i)$ the dual weight distribution of $C_2$.
  Since $C$ is projective, we have $B_1 = B_2 = 0$.
  We set $y=2^{k-2} = \frac{1}{4}\#C$.
  By Lemma~\ref{lem:3wt_dim}, $y\in \Z$.
  Since $w_2$ is the only odd weight, Lemma~\ref{lemma_even_weight_subcode} gives $A_{w_2} = 2y$.
  Now Equation~\eqref{eq_ppm2proj} applied to $C$ yields
  \[
	  w_1 A_{w_1} + w_3 A_{w_3} = 2y(n-w_2)\text{.}
  \]
  From Lemma~\ref{lemma_even_weight_subcode} we conclude that $C_2$ is a two-weight code of dimension $k-1$ and effective length $n' \leq n$ with non-zero weights $w_1$ and $w_3$.
  Since $C$ is projective, we have $n'\in\{n-1,n\}$.
  Noting that $A_{w_1}$ and $A_{w_3}$ are also the numbers of codewords of weights $w_1$ and $w_3$ in $C_2$, the application of equation~\eqref{eq_ppm2} to the full-length code arising from $C_2$ after (possibly) removing the zero position yields
  \begin{eqnarray}
    w_1A_{w_1}+w_3A_{w_3} &=& n'y\text{.}\label{eq_subcode}
  \end{eqnarray}
  Hence $n'y = 2y(n-w_2)$ and thus $n' = 2(n - w_2)$ is even.
  By the assumtion that $n$ is even, $n' = n-1$ is not possible.
  Therefore $n'=n$ and $w_2=\frac{n}{2}$.
  So $C_2$ is full-length and hence $B'_1 = 0$.
  Now the difference of Equations~\eqref{eq_ppm3} for $C$ and $C_2$ with $w_2 = \frac{n}{2}$, $A_{w_2} = 2y$ and $B_2 = 0$ gives
  \[
  	\frac{n^2}{4} \cdot 2y = 2y \cdot \frac{n(n+1)}{2} - y \left(B'_2 + \frac{n(n+1)}{2}\right)\text{,}
  \]
  which simplifies to $\frac{n^2}{2} = \frac{n(n+1)}{2} - B'_2$ and further to $B'_2 = \frac{n}{2}$.
  As $C$ is projective, the position multiplicities of the codimension $1$ subcode $C_2$ are at most $2$.
  Denoting the number of position pairs of multiplicity $2$ by $m$, Lemma~\ref{lem:col_mult_B2} yields $m = B_2' = \frac{n}{2}$.
  Therefore, all positions of $C_2$ appear with multiplicity $2$ and thus, $C_2$ is the two-fold repetition of a projective two-weight code with non-zero weights $\frac{w_1}{2}$ and $\frac{w_3}{2}$.
\end{proof}

\begin{remark}
	\label{rem:w1_w3}
	As seen in the above proof, in the situation of Lemma~\ref{lemma_3wt_neven_1oddweight} we have $A_{w_2} = 2y$.
	Moreover, we can use Equations~\eqref{eq_ppm1} and~\eqref{eq_ppm2} to compute the frequencies
	\[
		A_{w_1} = \frac{(2y-1) w_3 - yn}{w_3 - w_1}
		\quad\text{and}\quad
		A_{w_3} = \frac{yn - (2y - 1)w_1}{w_3 - w_1}
	\]
	depending on the weights $w_1$ and $w_3$.
\end{remark}

From now on, we add the extra constraint $w_1+w_2+w_3=\tfrac{3n}{2}$.

\begin{corollary}
  \label{cor_n_divisible_by_4}
  Let $C$ be a projective $[n,k]_2$ three-weight code with non-zero weights $w_1$, $w_2$ and $w_3$ satisfying $w_1+w_2+w_3=\tfrac{3n}{2}$. Then $n$ is a multiple of $4$.
\end{corollary}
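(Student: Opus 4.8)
The plan is to reduce the statement to a parity analysis of the weights, and then to resolve the single delicate configuration by combining Lemma~\ref{lemma_3wt_neven_1oddweight} with Delsarte's theorem.

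First I would observe that $w_1+w_2+w_3=\tfrac{3n}{2}$ is an integer, so $n$ is even; write $n=2m$ and label the weights so that $w_1<w_2<w_3$. Arguing by contradiction, suppose $4\nmid n$, so that $m$ is odd and hence $w_1+w_2+w_3=3m$ is odd. Then an odd number of the three weights are odd, i.e.\ either all three or exactly one of them. The case of three odd weights is immediately impossible: by Lemma~\ref{lem:3wt_dim} we have $k\ge 3$, and then Lemma~\ref{lemma_swt_odd} produces a non-zero \emph{even} weight. So it remains to exclude the case that exactly one weight is odd.

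Assume exactly one weight is odd. By Lemma~\ref{lemma_3wt_neven_1oddweight} this odd weight equals $\tfrac n2$, which is the arithmetic mean of $w_1,w_2,w_3$; hence it can only be the middle weight $w_2$ (if $w_1=\tfrac n2$ then $w_2,w_3>\tfrac n2$ would push the mean above $\tfrac n2$, and symmetrically $w_3\neq\tfrac n2$). Lemma~\ref{lemma_3wt_neven_1oddweight} moreover tells us that the even-weight subcode $C_2$ is the $2$-fold replication of a projective $[\tfrac n2,k-1]_2$ two-weight code $C'$ whose non-zero weights are $\tfrac{w_1}{2}<\tfrac{w_3}{2}$; since $w_1+w_3=\tfrac{3n}{2}-w_2=n$, these two weights sum to $\tfrac n2=m$, an odd number. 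Now Delsarte's theorem (Lemma~\ref{lemma_delsarte}) applied to $C'$ says that its weights have the shape $u\cdot 2^t$ and $(u+1)\cdot 2^t$ with $u\ge 1$, $t\ge 0$; their sum $(2u+1)\,2^t$ equals the odd number $m$, so $t=0$. Thus the weights of $C'$ are the consecutive integers $u,u+1$, the three weights of $C$ are $(w_1,w_2,w_3)=(2u,2u+1,2u+2)$, and $n=4u+2$.

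To finish, I would substitute $(w_1,w_2,w_3)=(2u,2u+1,2u+2)$ together with $A_{w_2}=2^{k-1}$ (the even-weight subcode has dimension $k-1$, cf.\ Remark~\ref{rem:w1_w3}) into the power moments~\eqref{eq_ppm1proj}--\eqref{eq_ppm3proj} of the projective code $C$. The first two moments give $A_{w_1}=2^{k-2}-1-u$ and $A_{w_3}=2^{k-2}+u$; plugging these into the third moment and simplifying yields $4u(u+1)=2^k u$, hence $u+1=2^{k-2}$. Consequently $C'$ would be a projective $[2^{k-1}-1,k-1]_2$ code, which is forced to be the simplex code $\Sim_2(k-1)$ of constant weight $2^{k-2}$ (as in the proof of Lemma~\ref{lem:3wt_length_restriction}), contradicting that $C'$ has the two distinct non-zero weights $2^{k-2}-1$ and $2^{k-2}$; concretely, $A_{w_1}=2^{k-2}-1-u=0$, contradicting that $C$ is a genuine three-weight code. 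This contradiction shows $4\mid n$. The main obstacle is precisely this last case, a single odd weight with $n\equiv 2\pmod 4$: the numerical MacWilliams/parity constraints alone do not rule it out (for instance the parameter set $(n,w_1,w_2,w_3)=(58,24,31,32)$ passes all of them), and one must exploit the rigidity supplied by Lemma~\ref{lemma_3wt_neven_1oddweight} — the halved two-weight subcode $C'$ — and then use Delsarte's restriction to collapse $C'$ to a simplex code; the remaining power-moment computation is routine.
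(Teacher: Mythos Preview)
Your proof is correct and follows essentially the same route as the paper's: reduce to $n\equiv 2\pmod 4$ with exactly one odd weight, invoke Lemma~\ref{lemma_3wt_neven_1oddweight} to force $w_2=\tfrac n2$ and obtain the halved projective two-weight code, apply Delsarte (Lemma~\ref{lemma_delsarte}) to pin the weights to $(\tfrac n2-1,\tfrac n2,\tfrac n2+1)$, and then use the first three power moments to reach $A_{w_1}=0$. The only cosmetic difference is that the paper extracts the contradiction by solving a quadratic in $n$ (getting $n=4y-2$), whereas you phrase it as $u+1=2^{k-2}$ forcing $C'$ to be the simplex code; these are the same computation.
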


\begin{proof}
  Since $\tfrac{3n}{2} = w_1+w_2+w_3$ is an integer, $n$ has to be even, so that we assume $n\equiv 2\pmod 4$.
  Then $\tfrac{3n}{2}=w_1+w_2+w_3$ is odd.
  By Lemma~\ref{lemma_swt_odd}, $C$ has an even weight, so exactly one weight of $C$ is odd.
  Without restriction, let $w_2$ be the odd weight.
  Lemma~\ref{lemma_3wt_neven_1oddweight} yields $w_2 = \frac{n}{2}$.
  From $w_1+w_2+w_3=\frac{3}{2} n$ we may further assume $w_1 < w_2 < w_3$, so $w_1=\tfrac{n}{2}-t$ and $w_3=\tfrac{n}{2}+t$ for some positive integer $t$.
  Since $w_1$ and $w_3$ are even, $t$ has to be odd.
  Moreover, Lemma~\ref{lemma_3wt_neven_1oddweight} says that $\frac{w_1}{2}$ and $\frac{w_3}{2}$ are the weights of a projective binary two-weight code. By
  Lemma~\ref{lemma_delsarte} the weight difference $\tfrac{w_3}{2}-\tfrac{w_1}{2}=t$ has to be a power of $2$. Since $t$ is odd, we conclude that $t=1$, so
  \[
  	w_1 = \frac{n}{2}-1\text{,}\quad
	w_2 = \frac{n}{2}\quad\text{and}\quad
	w_3 = \frac{n}{2}+1\text{.}
  \]
  Writing $y = 2^{k-2}$, the frequencies from Remark~\ref{rem:w1_w3} evaluate to
  \begin{equation}
  	\label{eq:w1_w2_w3_special}
	  A_{w_1} = y -\frac{n}{4} - \frac{1}{2}\text{,}\quad
	  A_{w_2} = 2y \quad\text{and}\quad
	  A_{w_3} = y + \frac{n}{4} - \frac{1}{2}\text{.}
  \end{equation}
  Plugging these expressions into Equation~\eqref{eq_ppm3proj} leads to
  \[
  	n^2y + 2y + \frac{n^2}{4} - 1 = n(n+1)y
  \]
  and further to the quadratic equation
  \[
	  n^2 - 4ny + (8y - 4) = 0
  \]
  with the two solutions $n \in \{2, 4y - 2\}$.
  Since the length of a three-weight code is at least $3$, necessarily $n=4y-2$.
  Now Equation~\eqref{eq:w1_w2_w3_special} yields $A_{w_1}=0$ -- a contradiction.
\end{proof}

Using the abbreviation $y=2^{k-2}$, we prepare equations \eqref{eq_A1}--\eqref{eq_B3} in the special case $w_1=\tfrac{n}{2}-t$, $w_2 = \frac{n}{2}$ and $w_3=\tfrac{n}{2}+t$.
  \begin{eqnarray}
    A_{w_1} &=& \frac{n(4y - n - 2t)}{8t^2} \label{eq:A1_pmt}\\
    A_{w_2} &=& 4y - 1 - \frac{n(4y-n)}{4t^2} \label{eq:A2_pmt}\\
    A_{w_3} &=& \frac{n(4y - n +2t)}{8t^2}\label{eq:A3_pmt}\\
    3B_3&=&  \frac{n(n-2t)(n+2t)}{8y}\label{eq:B3_pmt}
  \end{eqnarray}

\begin{lemma}
	\label{lem:3wt_k3}
	Let $C$ be a projective $[n,3]_2$ three-weight code with non-zero weights $w_1 < w_2 < w_3$ satisfying $w_1+w_2+w_3=\tfrac{3n}{2}$.
	Then $C$ has length $n=4$, weight distribution $(0^1 1^1 2^3 3^3)$ and is isomorphic to the code spanned by the generator matrix
  \[
  	\left(\begin{matrix}
      1&0&0&0\\
      0&1&0&1\\
      0&0&1&1
    \end{matrix}\right)\text{.}
\]
\end{lemma}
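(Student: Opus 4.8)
The strategy is to combine the structural results already established—in particular Corollary~\ref{cor_n_divisible_by_4}, Lemma~\ref{lemma_3wt_neven_1oddweight} and Lemma~\ref{lem:3wt_length_restriction}—with the explicit frequency formulas \eqref{eq:A1_pmt}--\eqref{eq:B3_pmt} and the non-negativity/integrality of all $A_i$ and $B_i$. First I would invoke Corollary~\ref{cor_n_divisible_by_4} to conclude $4 \mid n$; in particular $n$ is even. By Lemma~\ref{lem:3wt_length_restriction}, $n \leq \frac{2^3-1}{2-1} - 2 = 5$, so together with $4 \mid n$ and $n \geq 3$ (a three-weight code needs at least three distinct non-zero codewords, hence dimension $\geq 2$ and length $\geq 3$) we are forced to $n = 4$.

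\textbf{Pinning down the weights.} With $n = 4$ we have $w_1 + w_2 + w_3 = 6$ and $1 \leq w_1 < w_2 < w_3 \leq 4$. The only possibility is $(w_1, w_2, w_3) = (1, 2, 3)$. (Alternatively, one can rederive this from the analysis in the proof of Corollary~\ref{cor_n_divisible_by_4}: since $n \equiv 0 \pmod 4$ need not hold a priori there, but here $n=4$ gives $\tfrac{3n}{2}=6$ is even, and one checks directly that $w_2 = n/2 = 2$, $w_1 = n/2 - t = 2 - t$, $w_3 = 2 + t$ with $t$ a positive integer, forcing $t = 1$.) So $w_1 = 1$, $w_2 = 2$, $w_3 = 3$.

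\textbf{Determining the weight distribution and the code.} With $y = 2^{k-2} = 2$, $n = 4$ and $t = 1$, formulas \eqref{eq:A1_pmt}--\eqref{eq:A3_pmt} give $A_{w_1} = \frac{4(8 - 4 - 2)}{8} = 1$, $A_{w_2} = 8 - 1 - \frac{4(8-4)}{4} = 3$, $A_{w_3} = \frac{4(8-4+2)}{8} = 3$, so the weight distribution is $(0^1 1^1 2^3 3^3)$ as claimed; one may also verify $3B_3 = \frac{4 \cdot 2 \cdot 6}{8 \cdot 2/2}$ consistently. It remains to show uniqueness of the code. Switch to the geometric description: $C$ corresponds to a spanning proper set $\mathcal{C}$ of $4$ points in $\PG(2,2)$, the Fano plane, which has $7$ points. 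Since $A_{w(c)} = A_4$ would be the frequency of a weight-$4$ codeword and $\max w(c) = 3$, no codeword has weight $4$, i.e. no hyperplane (line) misses all four points; equivalently, every one of the $7$ lines meets $\mathcal{C}$. A $4$-point subset of the Fano plane is the complement of a $3$-point subset, and a line of the Fano plane has $3$ points, so the condition "every line meets $\mathcal{C}$" is exactly the condition "the complementary $3$-set is not a line". Up to the action of $\mathrm{PGL}(3,2)$ there are two orbits of $3$-subsets of the Fano plane—the lines and the triangles (non-collinear triples)—so $\mathcal{C}$ is the complement of a triangle, and all such configurations are projectively equivalent. Hence $C$ is unique up to isomorphism, and one checks that the exhibited generator matrix $\left(\begin{smallmatrix}1&0&0&0\\0&1&0&1\\0&0&1&1\end{smallmatrix}\right)$ yields a code with the right weight distribution (columns $e_1, e_2, e_3, e_2+e_3$: the three non-collinear points $e_1, e_2, e_3$ plus a fourth point off the line through two of them), completing the proof.

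\textbf{Main obstacle.} The only genuinely non-formulaic step is the uniqueness-of-the-code claim; I expect this to be the crux, but it reduces to the elementary fact that $\mathrm{PGL}(3,2)$ acts transitively on non-collinear triples (equivalently, on $4$-subsets whose complement is a triangle) in the Fano plane, which is standard. Everything else—$n = 4$, the weight triple, and the weight distribution—follows mechanically from the divisibility bound, the length bound, and the explicit formulas already proved.
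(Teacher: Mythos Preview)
Your proof is correct. The reduction to $n=4$ via Corollary~\ref{cor_n_divisible_by_4} and Lemma~\ref{lem:3wt_length_restriction} is exactly what the paper does, and your determination of the weight triple and weight distribution via the explicit formulas is sound.

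Where you diverge from the paper is in the uniqueness argument. The paper argues directly from the systematic form $(I_3 \mid v)$: projectivity forces $w(v)\ge 2$, and $w(v)=3$ gives only two weights, so $w(v)=2$, leaving three equivalent choices. Your route via the geometric picture---a $4$-set in the Fano plane whose complement is a triangle, together with transitivity of $\mathrm{PGL}(3,2)$ on triangles---is equally valid and more conceptual; it also ties in neatly with the remark following the lemma that the code is the complement of a triangle in $\PG(2,2)$. The paper's argument is slightly more self-contained (no appeal to a group action), while yours explains \emph{why} the code is unique rather than just enumerating cases.

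One small slip: your parenthetical description ``a fourth point off the line through two of them'' is not accurate, since $e_2+e_3$ lies precisely on the line through $e_2$ and $e_3$. What matters (and what holds) is that the complement $\{e_1+e_2,\,e_1+e_3,\,e_1+e_2+e_3\}$ is non-collinear, so the given generator matrix does realize the claimed code.
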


\begin{proof}
	By Lemma~\ref{lem:3wt_length_restriction}, $n \leq 5$, and by Corollary~\ref{cor_n_divisible_by_4}, $4 \mid n$.
	Therefore $n = 4$.
	The code $C$ is isomorphic to a systematic code, which has a generator matrix of the form $(I_3 \mid v)$, where $I_3$ denotes the $3\times 3$ unit matrix and $v$ is a vector in $\F_2^3$.
	As $C$ is projective, $w(y) \geq 2$.
	Furthermore, $w(y) = 3$ is not possible, as $C$ would have only the two weights $2$ and $4$.
	So $w(y) = 2$.
	We note that the three possibilities for $y$ lead to equivalent codes, and that the resulting code has the stated weight distribution.
\end{proof}

We remark that geometrically, the above $[4,3]_2$ code corresponds to the complement of a triangle in the projective plane $\PG(2,2)$.

\begin{theorem}
  \label{thm:3wt_thm5_no_weight_odd}
  Let $C$ be a projective $[n,k]_2$ three-weight code with non-zero weights $w_1 < w_2 < w_3$ satisfying $w_1+w_2+w_3=\tfrac{3n}{2}$.
  Then $n$ is a multiple of $4$, and one of the following cases occurs.
  \begin{enumerate}[(i)]
  	\item $k\geq 4$, $n\geq 8$ and all weights of $C$ are even.
	\item The code $C$ has the parameters $[4,3]_2$ and is isomorphic to the code in Lemma~\ref{lem:3wt_k3}.
  \end{enumerate}
\end{theorem}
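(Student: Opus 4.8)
The plan is to split on the dimension and then squeeze the even-weight subcode. First, Corollary~\ref{cor_n_divisible_by_4} already gives $4\mid n$, and Lemma~\ref{lem:3wt_dim} gives $k\geq 3$. If $k=3$, Lemma~\ref{lem:3wt_k3} identifies $C$ up to isomorphism and lands us in case~(ii). So the whole content is the case $k\geq 4$, where I want to show that all weights of $C$ are even; once this is known, $n\geq 8$ is immediate, since three distinct positive even integers that are $\leq n$ force $n\geq 6$, and $4\mid n$ then gives $n\geq 8$.

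So assume $k\geq 4$ and, for contradiction, that $C$ has an odd weight. Since $4\mid n$, the number $w_1+w_2+w_3=\tfrac{3n}{2}$ is even, so the number of odd weights among $w_1,w_2,w_3$ is even; by Lemma~\ref{lemma_swt_odd} they are not all odd, hence exactly two of them are odd and exactly one, call it $w_{\mathrm e}$, is even. The even-weight subcode $C_2$ of $C$ is then made up of the zero word and the codewords of weight $w_{\mathrm e}$, so it is a constant-weight code; and since $C$ has odd-weight words, $C_2\neq C$, so $\dim C_2=k-1\geq 3$ by Lemma~\ref{lemma_even_weight_subcode}. Bonisoli's theorem (Lemma~\ref{lemma_1wt}) now tells us that $C_2$, considered as a full-length code, is the $u$-fold repetition of the simplex code $\Sim_2(k-1)$ with $u=w_{\mathrm e}/2^{k-2}$; in particular its effective length is $u(2^{k-1}-1)$.

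The next step is to force $u=1$. As a codimension-one subcode of the projective code $C$, $C_2$ corresponds to a projection $\pi_P(\mathcal{C})$ of a point set $\mathcal{C}$ of $C$; since $\mathcal{C}$ is a proper set, this projection has $n$ or $n-1$ points, i.e.\ the effective length of $C_2$ lies in $\{n-1,n\}$. Comparing with Lemma~\ref{lem:3wt_length_restriction}, which gives $n\leq 2^k-3<2\bigl(2^{k-1}-1\bigr)$, we must have $u=1$; then the effective length $2^{k-1}-1$ is odd, so it equals $n-1$ (as $n$ is even), giving $n=2^{k-1}$ and $w_{\mathrm e}=2^{k-2}=n/2$. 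Because the two weights other than $w_{\mathrm e}$ are positive and sum to $n$, they cannot both exceed $n/2$ nor both be below $n/2$; hence $w_{\mathrm e}$ is the middle weight, $w_2=\tfrac{n}{2}$, and $w_1=\tfrac{n}{2}-t$, $w_3=\tfrac{n}{2}+t$ for some integer $t\geq 1$. Now $A_{w_2}$ equals the number $2^{k-1}-1=n-1$ of nonzero even-weight codewords, while substituting $y=2^{k-2}=n/2$ into the prepared formula~\eqref{eq:A2_pmt} gives $A_{w_2}=2n-1-\tfrac{n^2}{4t^2}$; equating the two yields $n=4t^2$, hence $t^2=2^{k-3}$. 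But $w_1=2t^2-t=t(2t-1)$ is odd, so $t$ is odd, which forces $t=1$ and $k=3$, contradicting $k\geq 4$. This contradiction shows that all weights are even when $k\geq 4$, and the proof is complete.

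I expect the only real difficulty to be the middle of this argument: spotting that the even-weight subcode is a constant-weight code and hence, by Bonisoli, a repeated simplex code, and then trapping the repetition number $u$ between the ``$\{n-1,n\}$'' bound coming from projectivity and the three-weight length bound of Lemma~\ref{lem:3wt_length_restriction}. After $u=1$ and $n=2^{k-1}$ are in hand, the rest is a one-line power-moment computation (via~\eqref{eq:A2_pmt}) together with the parity of $t$.
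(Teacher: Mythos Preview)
Your proof is correct and follows essentially the same strategy as the paper: reduce to the case of an odd weight, recognize the even-weight subcode as a constant-weight code, apply Bonisoli, pin down the repetition number $u$, deduce $n=2^{k-1}$ and $w_2=n/2$, and finish via the power-moment identity~\eqref{eq:A2_pmt} to force $t=1$ and $k=3$.

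The one noteworthy difference is how you eliminate $u\ge 2$: you invoke Lemma~\ref{lem:3wt_length_restriction} to get $n\le 2^k-3<2(2^{k-1}-1)$, which immediately forces $u=1$. The paper instead bounds the position multiplicity of $C_2$ by $2$ (as a codimension-$1$ subcode of a projective code) to obtain $u\le 2$, and then disposes of $u=2$ by identifying $C$ as a (punctured) simplex code. Your shortcut is a bit cleaner here and avoids the extra case; otherwise the arguments coincide.
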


\begin{proof}
  By Corollary~\ref{cor_n_divisible_by_4}, $n$ is a multiple of $4$.

  In the case that $n$ has only even weights, the largest weight is at least $6$, so $n \geq 8$.
  Moreover, $k \geq 4$ by Lemma~\ref{lem:3wt_k3}.

  Now we assume that $C$ has an odd weight.
  As $\frac{n}{2}$ is even, $C$ has at least two odd weights by Lemma~\ref{lemma_3wt_neven_1oddweight}.
  Since $w_1 + w_2 + w_3 = n$ is even, we get that $C$ has exactly two odd weights, say $w_1$ and $w_3$.
  Let $C_2$ be the even-weight subcode of $C$.
  The code $C$ is projective and by Lemma~\ref{lemma_even_weight_subcode}, the codimension of $C_2$ in $C$ is $1$.
  Therefore, the maximum position multiplicity of $C_2$ is at most $2$, and the effective length $n'$ of $C_2$ is either $n-1$ or $n$.
  Since $w_2$ is the only even weight of $C$, the subcode $C_2$ is a code of constant weight $w_2$ and frequency $A_{w_2}=\#C_2 - 1 = 2^{k-1}-1$.
  From Lemma~\ref{lemma_1wt} we conclude that $w_2 = u\cdot 2^{k-2}$ and $n'=u\cdot (2^{k-1}-1)$ with an integer $u\in\{1,2\}$, where $u \leq 2$ follows from the maximum position multiplicity.

  Let us first investigate the case $u=2$.
  Here, $n'=2^k-2$ and $n \in \{2^k-2, 2^k-1\}$.
  Let $G$ be a generator matrix of $C$.
  Since $C$ is projective, $G$ neither has a zero column, nor a repeated column.
  So each of the $2^k-1$ non-zero vectors in $\F_2^k$ occurs exactly once as a column in $G$, possibly with the excection that a single vector might not occur at all.
  In the case $n = 2^k - 1$, all vectors occur as a column in $G$, so $C = \Sim_2(k)$, which is a code of constant weight $2^{k-1}$.
  In the case $n = 2^k - 2$, $C$ is the simplex code $\Sim_2(k)$ punctured in a single position, so $C$ has the two weights $2^{k-1}$ and $2^{k-1}-1$.%
  \footnote{In fact, in this case $C$ is a MacDonald code.}
  This contradicts the assumption that $C$ is a three-weight code.

  It remains to consider $u=1$.
  Here, $w_2 = 2^{k-2}$ and $n' = 2^{k-1} - 1$, which is odd.
  Since $n$ is even, necessarily $n' = n-1$, so $n = 2^{k-1}$ and $w_2 = \frac{n}{2}$.
  Combined with $w_1 + w_2 + w_3 = \frac{3n}{2}$ we can write $w_1=\tfrac{n}{2}-t$ and $w_3=\tfrac{n}{2}+t$ for some positive integer~$t$.
  Together with the abbreviation $y = 2^{k-2}$, equations~(\ref{eq:A1_pmt})--(\ref{eq:A3_pmt}) can be simplified to
  \begin{eqnarray*}
    A_{w_1} &=& \frac{y(y-t)}{2t^2}\\
    A_{w_2} &=& 4y - 1 - \frac{y^2}{t^2} \\
    A_{w_3} &=& \frac{y(y+t)}{2t^2}.
  \end{eqnarray*}
  Now we use $A_{w_2}=2^{k-1}-1=2y-1$ to conclude $y=2t^2$ (or $y=0$, which is impossible).
  As $y$ is a power of $2$, so is $t$.
  From $w_1 = t(2t - 1)$ odd we get that $t$ is is odd.
  Together, this forces $t = 1$, which gives $y = 2$ and therefore $k = 3$.
  Therefore, $C$ is isomorphic to the code in Lemma~\ref{lem:3wt_k3}.
\end{proof}

\begin{lemma}
	\label{lem:t_divisibility}
	Let $C$ be a projective $[n,k]_2$ three-weight code with non-zero weights $w_1 < w_2 < w_3$ satisfying $w_1+w_2+w_3=\tfrac{3n}{2}$ and $w_2 = \frac{n}{2}$.
	Then $w_1 = w_2 - t$ and $w_3 = w_2 + t$ where $t$ is a power of $2$.
	Moreover, $2t \mid n$, and $t$ is the largest integer $\Delta$ such that $C$ is $\Delta$-divisible.
\end{lemma}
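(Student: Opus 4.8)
The plan is to extract the divisibility $2t \mid n$ directly from the first two MacWilliams identities, and then to conclude that $t$ is a power of $2$ via Ward's theorem on divisible codes, using the same non-repetition reasoning already recalled in Section~\ref{sec_feasible_parameters}.

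First I would fix notation. Since $w_2 = \tfrac{n}{2}$ is an integer, $n$ is even, and $w_1 + w_2 + w_3 = \tfrac{3n}{2}$ gives $w_1 + w_3 = n$. As $w_1 < w_3$ are integers with even sum, $w_3 - w_1$ is a positive even integer, so $w_1 = \tfrac{n}{2} - t$ and $w_3 = \tfrac{n}{2} + t$ with $t = \tfrac{1}{2}(w_3 - w_1) \in \Z_{\ge 1}$ (and $w_1 \ge 1$ forces $2t < n$). Substituting these three weights into the first two Pless power moments \eqref{eq_ppm1proj} and \eqref{eq_ppm2proj} --- equivalently, reading off \eqref{eq:A1_pmt} and \eqref{eq:A3_pmt} --- one obtains $t\,(A_{w_3} - A_{w_1}) = \tfrac{n}{2}$. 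Since $A_{w_1}, A_{w_3}$ are non-negative integers, $A_{w_3} - A_{w_1} \in \Z$, hence $2t \mid n$.

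From $t \mid \tfrac{n}{2}$ it follows that $\gcd(w_1, w_2, w_3) = \gcd(\tfrac{n}{2} - t, \tfrac{n}{2}, \tfrac{n}{2} + t) = \gcd(t, \tfrac{n}{2}) = t$, so $C$ is $t$-divisible, and since $C$ is $\Delta$-divisible exactly when $\Delta \mid \gcd(w_1, w_2, w_3)$, the integer $t$ is the largest such $\Delta$. Finally, by \cite[Theorem~1]{ward1981divisible} a full-length $t$-divisible binary $[n,k]_2$ code is a $t / \gcd(t, 2^{k-1})$-fold repetition of some code; as $C$ is projective it admits no proper repetition, so $t \mid 2^{k-1}$ and therefore $t$ is a power of $2$, as claimed. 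The only points needing care are the elementary rearrangement yielding $t\,(A_{w_3} - A_{w_1}) = \tfrac{n}{2}$ and the (already used) justification that projectivity rules out proper repetitions; I do not anticipate a genuine obstacle.
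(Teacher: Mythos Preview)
Your proposal is correct and follows essentially the same route as the paper's own proof: deduce $A_{w_3}-A_{w_1}=\tfrac{n}{2t}$ from \eqref{eq:A1_pmt} and \eqref{eq:A3_pmt} (equivalently, as you note, from the first two power moments), conclude $2t\mid n$ and hence $t=\gcd(w_1,w_2,w_3)$, and then invoke Ward's theorem together with projectivity to force $t$ to be a power of~$2$. The only difference is that you spell out a bit more of the setup (e.g.\ the parity argument for $w_3-w_1$, which is in fact immediate from $w_1+w_3=2w_2$); otherwise the arguments coincide.
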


\begin{proof}
	Equations~\eqref{eq:A1_pmt} and~\eqref{eq:A3_pmt} give $A_{w_3} - A_{w_1} =\tfrac{n}{2t}$, so $2t \mid n$.
	Therefore $t \mid w_2 = \frac{n}{2}$, implying that $t$ is the greatest common divisor of $w_1 = w_2 - t$, $w_2$ and $w_3 = w_2 + t$, so $\Delta = t$.
	Since $C$ is projective, $C$ cannot be the proper repetition of some code.
	Now as a consequence of \cite[Theorem 1]{ward1981divisible}, the number $\Delta = t$ must be a power of $2$.
\end{proof}

\begin{lemma}
  \label{lem:divisibility_2}
  Let $C$ be a projective $[n,k]_2$ three-weight code with non-zero weights $w_1 < w_2 < w_3$ satisfying $w_1+w_2+w_3=\tfrac{3n}{2}$ and $w_2=\tfrac{n}{2}$.
  Let $a \geq 0$ be the largest integer such that $C$ is $2^a$-divisible.
  Then $k \leq 8a + 9$.
\end{lemma}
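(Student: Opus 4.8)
The plan is to reduce, via Lemma~\ref{lem:t_divisibility}, to the explicit parametrisation of the weights, and then to read the bound off from two consequences of the MacWilliams identities: that $A_{w_2}\ge 1$, and that $B_3$ is a non-negative integer.

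By Lemma~\ref{lem:t_divisibility} I may write $w_1=\tfrac n2-2^a$, $w_2=\tfrac n2$, $w_3=\tfrac n2+2^a$ with $2^{a+1}\mid n$, and $2^a$ the exact power of~$2$ dividing all weights. Put $n=2^{a+1}m$ and $K=k-a-1$. Equation~\eqref{eq:A1_pmt} gives $A_{w_1}=\tfrac12 m(2^K-m-1)$, so $A_{w_1}\ge 1$ together with $m\ge 2$ (a two-weight code being excluded) yields $2\le m\le 2^K-2$. If $a=0$ the claim is immediate: then $w_1,w_3$ are odd (as $4\mid n$ by Corollary~\ref{cor_n_divisible_by_4}), so Theorem~\ref{thm:3wt_thm5_no_weight_odd} forces the $[4,3]_2$ code and $k=3\le 9$. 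So assume $a\ge 1$; then all weights are even, $k\ge 4$, and I may also assume $K\ge a+4$, since otherwise $k\le 2a+4\le 8a+9$.

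Now I extract the two inequalities. From~\eqref{eq:A2_pmt}, $A_{w_2}=2^k-1-m(2^K-m)$; hence $A_{w_2}\ge 1$ forces $m(2^K-m)\le 2^k-2<2^{a+K+1}$, and since the larger of $m$ and $2^K-m$ is at least $2^{K-1}$,
\[
  m'':=\min\{m,\ 2^K-m\}<2^{a+2}.
\]
From~\eqref{eq:B3_pmt}, $3B_3=2^{2a+1-K}\,m(m^2-1)$. As $B_3\in\Z_{\ge 0}$ and $3\mid m(m^2-1)$, whenever $K>2a+1$ the power $2^{K-2a-1}$ must divide $m(m^2-1)/3$, that is,
\[
  K\le 2a+1+v_2\!\bigl(m(m^2-1)\bigr);
\]
this is exactly the divisibility $2^{k-2}\mid w_1w_2w_3$ noted after~\eqref{eq_B3}, made explicit in the present coordinates.

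Finally, $v_2(m(m^2-1))=v_2(m)+v_2(m-1)+v_2(m+1)$ is invariant under $m\mapsto 2^K-m$ (because $v_2(2^K-j)=v_2(j)$ for $0<j<2^K$), so it equals $v_2\bigl(m''(m''-1)(m''+1)\bigr)$; and since $m''<2^{a+2}$, at most one of $m'',m''-1,m''+1$ is a multiple of~$4$ and that one is $\le 2^{a+2}$, so $v_2(m(m^2-1))\le a+3$. Hence $K\le 3a+4$ and $k=a+K+1\le 4a+5\le 8a+9$. The one step that I expect to require genuine care is this last one: verifying that the bound $A_{w_2}\ge 1$ really does pin $m''$ below $2^{a+2}$, and cleanly disposing of the near-extreme values (essentially $m''\in\{2,3\}$ and their images under $m\mapsto 2^K-m$); if a different packaging or a sharper constant is wanted, Lemma~\ref{lem:div_one_more} provides the complementary lower bound $\lfloor(k-1)/(a+1)\rfloor\le v_2(m(m^2-1))$ on the very same quantity, its clauses~(ii)/(iii) describing the structure of~$C$ when this is near equality.
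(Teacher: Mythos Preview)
Your proof is correct and in fact yields a sharper bound than the paper's, namely $k\le 4a+5$ (for $a\ge 1$; the case $a=0$ gives $k=3$), which is considerably stronger than the stated $k\le 8a+9$.

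Both arguments start from the same raw material—the parametrisation $w_i=\tfrac n2\pm t$ with $t=2^a$ via Lemma~\ref{lem:t_divisibility}, and the formulas \eqref{eq:A1_pmt}--\eqref{eq:B3_pmt}—but the organisation differs. The paper writes $n=2^x z$ with $z$ odd and splits into two cases according to whether $k\ge x+2a+2$; in the first case it deduces $x=a+1$, expresses $z$ as $s\cdot 2^{k-3x}\pm 1$, and after a somewhat lengthy chain of inequalities obtains $k\le 8a+9$; the second case gives $k\le 6a+3$. Your route is cleaner: writing $n=2^{a+1}m$ and observing that the $2$-adic valuation of $m(m^2-1)$ is invariant under $m\mapsto 2^K-m$ lets you replace $m$ by the small representative $m''=\min\{m,2^K-m\}$, which the inequality $A_{w_2}\ge 1$ pins below $2^{a+2}$. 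The elementary bound $v_2\bigl(m''(m''-1)(m''+1)\bigr)\le a+3$ then closes the argument. This symmetry trick is what the paper is missing and accounts for the factor-of-two improvement in the bound.

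Two minor comments. First, the assumption $K\ge a+4$ you insert is never actually used in the subsequent steps; the argument already handles $K\le 2a+1$ directly, and the $A_{w_2}$ bound does not require it. Second, the closing paragraph is unnecessarily cautious: the verification that $m''<2^{a+2}$ follows cleanly from $m''\cdot 2^{K-1}\le m''(2^K-m'')<2^{a+K+1}$, and the ``near-extreme'' values $m''\in\{2,3\}$ need no separate treatment, as your valuation bound $v_2\le a+3$ already covers them.
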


\begin{proof}
As before, we will use the abbreviation $y=2^{k-2}$ and write $w_1=\tfrac{n}{2}-t$ and $w_3=\tfrac{n}{2}+t$ with an integer $t \geq 1$.
By Lemma~\ref{lem:3wt_dim}, $y\in\Z$.
By Lemma~\ref{lem:t_divisibility}, $t = 2^a$ is the largest integer $\Delta$ such that $C$ is $\Delta$-divisible.
There is an odd integer $z$ and a non-negative integer $x$ with $n = 2^x \cdot z$.
Since $C$ is projective, $n \leq 2^k - 1$.
Together with $2t \mid n$, we get $a+1\leq x\leq k-1$.

Plugging $t = 2^a$, $n = 2^x z$ and $y = 2^{k-2}$ into Equations~\eqref{eq:A1_pmt}--\eqref{eq:B3_pmt} we get
\begin{eqnarray}
	A_{w_1} &=& \frac{z\cdot (2^{k-a-1}-2^{x-a-1} z - 1)}{2^{a+2-x}}\text{,} \label{eqs1}\\
	A_{w_2} &=& 2^{2(x-a-1)}z^2 + 2^{k} - 2^{x+k-2a-2}z - 1\text{,} \label{eqs2}\\
	A_{w_3} &=& \frac{z\cdot (2^{k-a-1}-2^{x-a-1}z + 1)}{2^{a+2-x}}\text{,} \label{eqs3}\\
	3B_3&=&  \frac{z\cdot \left(2^{x-a-1}z - 1\right)\cdot\left(2^{x-a-1}z + 1\right)}{2^{k-x-2a-1}}\text{.}\label{eqs4}
\end{eqnarray}

First case: $k \geq x + 2a + 2$.
Equivalently, $k-x-2a-1\ge 1$, so the denominator of the right hand side of Equation~\eqref{eqs4} is even.
By $B_3\in\mathbb{Z}$, the numerator is even, too.
Since $z$ is odd, this implies $a = x-1$.
Now $0 < w_1 = \frac{n}{2}-t = 2^{x-1} z - 2^a = 2^a (z - 1)$ yields $z > 1$.
Equation~\eqref{eqs4} with $a = x-1$ yields
\[
	3 B_3 = \frac{z\cdot \left(z - 1\right)\cdot\left(z + 1\right)}{2^{k-3x+1}}\text{.}
\]
From our precondition $k \geq x + 2a + 2 = 3x$ we have $k - 3x\geq 0$, so $2^{k-3x}$ is an integer.
Since $\gcd(z-1,z+1)=2$, we have that $2^{k-3x}$ either divides $z-1$ or $z+1$.
Therefore, $z=s\cdot 2^{k-3x}+\alpha$ for some integer $s$ and $\alpha\in\{-1,1\}$.
By $z > 1$, $s \geq 1$.
Now Equation~\eqref{eqs1} yields
\[
	0 < \frac{2^{a+2-x}}{z}\cdot A_{w_1} = 2^{k-x}-\left(s\cdot 2^{k-3x}+\alpha+1\right) \leq 2^{k-x}-s\cdot 2^{k-3x}\text{,}
\]
so that $s<2^{2x}$ and hence $s\le 2^{2x}-1$.

Using Equation~\eqref{eqs2} we get
\begin{align*}
	0
	& < s (A_{w_2} + 1) \\
	& = s (z^2 + 2^k) + s\cdot z \cdot 2^{k-x} \\
	& \leq (2^{2x} - 1)((s 2^{k-3x} + \alpha)^2 + 2^k) - s(s2^{k-3x} + \alpha) 2^{k-x} \\
	& = (2^{2x} - 1)(s^2 2^{2k-6x} + 2\alpha s 2^{k-3x} + 1 + 2^k) - s(s2^{k-3x} + \alpha) 2^{k-x} \\
	& = s^2 2^{2k-4x} + 2\alpha s 2^{k-x} + 2^{2x} + 2^{k+2x} - s^2 2^{2k-6x} - \alpha s 2^{k-3x+1} - 1 - 2^k \\
	& \qquad - s^2 2^{2k-4x} - \alpha s 2^{k-x} \\
	& = \alpha s (2^{k-x} - 2^{k-3x+1}) + (2^{2x} + 2^{k+2x}) - s^2 2^{2k-6x} - (1 + 2^k) \\
	& \leq 2s\cdot 2^{k+2x} + 2s\cdot 2^{k + 2x} - s 2^{2k-6x} \\ 
	& = s(2^{k+2x+2} - s^{2k - 6x}) \text{,}
\end{align*}
where in the second last step $s \geq 1$ has been used.
Therefore $k + 2x + 2 > 2k  - 6x$ and hence
\[
	k \leq 8x + 1 = 8(a + 1) + 1 = 8a + 9\text{.}
\]

Second case: $k\le  x+2a+1$.
Equation~\eqref{eq:A1_pmt} implies
\[
	0 < \frac{8t^2}{n} A_{w_1} = 4y - n - 2t < 4y - n\text{.}
\]
We have $2^x \mid n$ and from $x \leq k-1$ also $2^x \mid 2^k = 4y$.
Therefore $2^x \mid 4y - n > 0$ and thus $4y - n \geq 2^x$.

By Equation~\eqref{eq:A2_pmt},
\begin{align*}
	0
	& < 4t^2 A_{w_2} \\
	& = 4t^2(4y - 1) - n(4y-n) \\
	& \leq 4t^2\cdot 4y - n\cdot 2^x \\
	& = 2^{k + 2a + 2} - 2^{2x} z \\
	& \leq (2^{k + 2a + 2} - 2^{2x})
\end{align*}
and thus $k + 2a + 2 > 2x$ and $k \geq 2x - 2a - 1$.
Combined with $k\le  x+2a+1$, finally
\[
	k = 2k - k \leq 2(x + 2a + 1) - (2x - 2a - 1) = 6a + 3 < 8a + 9\text{.}
\]
\end{proof}

By Lemma~\ref{lem:divisibility_2} and Lemma~\ref{lem:3wt_length_restriction}, the following numbers $K(r)$ and $N(r)$ are well-defined.

\begin{definition}
	Let $a \geq 1$ be an integer.
	We define $K(a)$ (resp. $N(a)$) as the largest dimension (resp. length) of a projective $[n,k]_2$ three-weight code $C$ with non-zero weights $w_1 <  w_2 < w_3$ satisfying $w_1+w_2+w_3=\tfrac{3n}{2}$ and $w_2=\tfrac{n}{2}$ such that $C$ is not $2^a$-divisible.
\end{definition}

\begin{theorem}
	\label{thm:K_N_bounds}
	Let $a \geq 1$ be an integer.
	Then
	\[
		2a + 1 \leq K(a) \leq 8a + 1
		\qquad\text{and}\qquad 2^{2a + 1} - 2^{a+1} \leq N(a) \leq 2^{K(a)} - 3\text{.}
	\]
\end{theorem}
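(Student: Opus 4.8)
The four inequalities split into two easy upper bounds and two lower bounds, and both lower bounds will be furnished by one explicit family of codes.

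\emph{Upper bounds.} Let $C$ be any projective $[n,k]_2$ three-weight code with $w_1+w_2+w_3=\tfrac{3n}{2}$, $w_2=\tfrac n2$ that is not $2^a$-divisible. Writing $w_1=w_2-t$, $w_3=w_2+t$, Lemma~\ref{lem:t_divisibility} says that $t$ is a power of~$2$ and is the largest $\Delta$ for which $C$ is $\Delta$-divisible; since $C$ is not $2^a$-divisible, $t=2^{a'}$ with $0\le a'\le a-1$. Then $C$ is $2^{a'}$-divisible with $a'$ maximal, so Lemma~\ref{lem:divisibility_2} gives $k\le 8a'+9\le 8a+1$; as this holds for every such~$C$, $K(a)\le 8a+1$. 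For the bound on $N(a)$ I would take a code realizing the maximal length: its dimension is $\le K(a)$ by definition, and Lemma~\ref{lem:3wt_length_restriction} then gives $N(a)=n\le 2^{k}-3\le 2^{K(a)}-3$.

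\emph{Lower bounds via one family.} The plan is to exhibit, for every $a\ge 1$, a projective $[\,2^{2a+1}-2^{a+1},\,2a+1\,]_2$ three-weight code $C_a$ with $w_2=\tfrac n2$ and $w_3-w_1=2^a$ that is $2^{a-1}$-divisible but not $2^a$-divisible; such a $C_a$ has dimension $2a+1$ and length $2^{2a+1}-2^{a+1}$, so it certifies $K(a)\ge 2a+1$ and $N(a)\ge 2^{2a+1}-2^{a+1}$ simultaneously. I would obtain $C_a$ as an anticode. First one considers the $[\,2^{a+1}-1,\,2a+1\,]_2$ code $D_a$ whose generator matrix has one column for each nonzero $x\in\F_2^{a+1}$, namely $(x_1,\dots,x_{a+1},\,x_1x_2,\,x_1x_3,\dots,\,x_1x_{a+1})^{\top}$. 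Its $2a+1$ rows, being distinct monomials, are linearly independent, and its columns are nonzero and pairwise distinct (a column determines $x$), so $D_a$ is projective of dimension $2a+1$.

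To read off the weights of~$D_a$ one splits the index set $\F_2^{a+1}\setminus\{0\}$ according to $x_1=0$ versus $x_1=1$. A codeword with coefficients $(c_1,\dots,c_{a+1};d_2,\dots,d_{a+1})$ evaluates on $\{x_1=0\}$ to the linear form $\sum_{j\ge2}c_jx_j$ and on $\{x_1=1\}$ to the affine form $c_1+\sum_{j\ge2}(c_j+d_j)x_j$; each block contributes weight $0$, $2^{a-1}$ or $2^a$ according to whether the relevant linear part, resp.\ constant, vanishes. The four combinations give exactly the weights $2^{a-1}$, $2^{a}$ and $3\cdot2^{a-1}$, with multiplicities $3(2^a-1)$, $2(2^a-1)^2+1$ and $2^a-1$ (summing to $2^{2a+1}-1$, as a check); in particular $D_a$ really has three weights and no codeword of weight $2^{2a}$. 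Hence the complement of its point set in $\PG(2a,2)$ is spanning, and the anticode $C_a:=D_a^{\complement}$ exists: it is a projective $[\,2^{2a+1}-2^{a+1},\,2a+1\,]_2$ code with weights $2^{2a}-3\cdot2^{a-1}$, $2^{2a}-2^a$, $2^{2a}-2^{a-1}$. These are three distinct positive integers summing to $3(2^{2a}-2^a)=\tfrac32 n$, the middle one is $\tfrac n2$, and their $2$-adic valuations are $a-1$, $a$, $a-1$, so their gcd is $2^{a-1}$ and $C_a$ is $2^{a-1}$-divisible but not $2^a$-divisible, exactly as required. (For $a=1$ this recovers the $[4,3]_2$ code of Lemma~\ref{lem:3wt_k3}.)

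\emph{Where the work is.} The substance is already packed into Lemma~\ref{lem:divisibility_2}. What remains is first hitting on the right witnesses --- the ``linear forms together with the quadrics $x_1x_j$'' code $D_a$ and its anticode --- and then running the weight count for $D_a$ carefully enough to confirm that all three weights occur, that $D_a$ is projective, and that it carries no weight-$2^{2a}$ codeword; those last facts are precisely what licenses the passage to the anticode. Everything else, including the reconciliation of the frequencies and the verifications $w_1+w_2+w_3=\tfrac32 n$ and $w_2=\tfrac n2$ for $C_a$, is bookkeeping.
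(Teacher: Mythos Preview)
The proposal is correct and follows essentially the same approach as the paper: identical upper bounds via Lemma~\ref{lem:divisibility_2} and Lemma~\ref{lem:3wt_length_restriction}, and an anticode construction for the lower bounds. Your code $D_a$ is in fact isomorphic to the paper's $\Sim_2(a)\oplus\RM_2(a+1)$ (the columns with $x_1=0$ give a simplex $\Sim_2(a)$ in the span of $e_2,\dots,e_{a+1}$, while those with $x_1=1$ form an affine $a$-space in the complementary subspace $\langle e_1,\,e_2+e_{a+2},\dots,e_{a+1}+e_{2a+1}\rangle$), so the two witness families coincide.
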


\begin{proof}
	$K(a) \leq 8(a-1) + 9 = 8a + 1$ by Lemma~\ref{lem:divisibility_2} and $N(a) \leq 2^{K(a)} - 3$ by Lemma~\ref{lem:3wt_length_restriction}.

	For the lower bounds, let $C = \Sim_2(a) \oplus \RM_2(a+1)$.
	It is a projective three-weight code of length $n = (2^a - 1) + 2^a = 2^{a+1} - 1$, dimension $k = a + (a+1) = 2a + 1$ and weights $w_1 = 2^{a-1}$, $w_2 = 2^a$ and $w_3 = 3\cdot 2^{a-1}$.%
	\footnote{We have already seen the code $C$ in Theorem~\ref{thm:4Delta} Case~\ref{thm:4Delta:minus1}, with $a-1$ in instead of $a$.}%
	Since $C$ does not have the weight $2^{k-1} = 2^{2a}$, the anticode $C^\complement$ is defined.
	It is a projective three-weight code of the same dimension $k$, length $n^\complement = (2^k - 1) - n = 2^{2a + 1} - 2^{a+1}$ and the three non-zero weights
	\begin{align*}
	w^\complement_1 & = 2^{k-1} - w_3 = 2^{2a} - 3\cdot 2^{a-1} = 2^{a-1}(2^{a+1} - 3)\text{,} \\
	w^\complement_2 & = 2^{k-1} - w_2 = 2^{2a} - 2^a = 2^{a-1}(2^{a+1} - 2)\quad\text{and} \\
	w^\complement_3 & = 2^{k-1} - w_1 = 2^{2a} - 2^{a-1} = 2^{a-1}(2^{a+1} - 1)\text{,}
	\end{align*}
	so $C^\complement$ is $2^{a-1}$-divisible, but not $2^a$-divisible.
	Furthermore
	\[
		w_1^\complement + w_2^\complement + w_3^\complement = 3\cdot 2^{2a} - 3\cdot 2^a = \frac{3}{2} n^\complement
		\qquad\text{and}\qquad
		w_2^\complement = \frac{n^\complement}{2}\text{.}
	\]
	Therefore, $K(a) \geq k = 2a + 1$ and $N(a) \geq n^\complement = 2^{2a + 1} - 2^{a+1}$.
\end{proof}

For small values of $a$, we can determine the exact values of $N(a)$ and $K(a)$.

\begin{theorem}\label{them:K_N_a_small}\item
	\begin{enumerate}[(a)]
		\item $K(1) = 3$ and $N(1) = 4$.
		\item $K(2) = 6$ and $N(2) = 56$.
		\item $K(3) = 11$ and $N(3) = 2024$.
	\end{enumerate}
\end{theorem}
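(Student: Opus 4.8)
The plan is to pin down each value by matching an explicit construction against an upper bound extracted from the MacWilliams identities and divisibility.

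For the lower bounds I would simply exhibit one witnessing code in each case. For $a=1$ the $[4,3]_2$ code of Lemma~\ref{lem:3wt_k3} has weights $1,2,3$, hence is not $2$-divisible, so $K(1)\ge 3$ and $N(1)\ge 4$. For $a=2$ the (unique) $[56,6]_2$ three-weight code with weights $(26,28,30)$ listed in Table~\ref{table:q2} is $2$-divisible but not $4$-divisible and has $w_2=28=n/2$, so $K(2)\ge 6$ and $N(2)\ge 56$. For $a=3$ I would take the dual $\mathcal{G}_{23}^{\perp}$ of the binary $[23,12,7]$ Golay code: this is a projective $[23,11]_2$ three-weight code with weight enumerator $1+506x^8+1288x^{12}+253x^{16}$. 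Since $A_{2^{10}}=0$, its anticode is defined and is a projective $[2024,11]_2$ three-weight code with weights $1008,1012,1016$, which is $4$-divisible but not $8$-divisible and satisfies $w_1+w_2+w_3=\tfrac32\cdot 2024$ and $w_2=1012=\tfrac12\cdot 2024$; hence $K(3)\ge 11$ and $N(3)\ge 2024$.

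For the upper bounds, let $C$ be any projective $[n,k]_2$ three-weight code with $w_1<w_2<w_3$, $w_1+w_2+w_3=\tfrac{3n}{2}$, $w_2=\tfrac n2$, that is not $2^a$-divisible. By Lemma~\ref{lem:t_divisibility} one has $w_1=\tfrac n2-t$ and $w_3=\tfrac n2+t$ with $t=2^{a'}$ the exact power-of-two divisor of all weights, $2t\mid n$, and $0\le a'\le a-1$, so I would bound $k$ and $n$ for each value of $a'$ separately and take the maximum. The case $a'=0$ is settled at once by Theorem~\ref{thm:3wt_thm5_no_weight_odd}: the code is the $[4,3]_2$ code of Lemma~\ref{lem:3wt_k3}, so $k=3$, $n=4$. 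For $a'\ge 1$ I would rerun the analysis in the proof of Lemma~\ref{lem:divisibility_2}: write $n=2^xz$ with $z$ odd, so $a'+1\le x\le k-1$, and impose that all of $A_{w_1},A_{w_2},A_{w_3},B_3$ from Equations~(\ref{eqs1})--(\ref{eqs4}), and all higher $B_i=\tfrac1{\#C}\sum_jK_i(j)A_j$, are non-negative integers. In the branch $k\ge x+2a'+2$, integrality of $B_3$ forces $x=a'+1$ (its numerator $z(2^{2(x-a'-1)}z^2-1)$ is otherwise odd) and then $z\equiv\pm1\pmod{2^{k-3a'-3}}$; positivity of $A_{w_1}$ and $A_{w_2}$ then bounds $z$ from above and confines it to lie near $0$ or near $2^{k-a'-2}$, leaving a finite, explicit list of tuples $(k,z)$. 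In the complementary branch $k\le x+2a'+1$ one gets $k\le 6a'+3$ directly, again a finite list. Each surviving tuple I would then either recognise among the realised parameters of Table~\ref{table:q2} (or as an anticode or a Golay-type code) or discard using non-negativity of some $B_i$, Lemma~\ref{lem:div_one_more}, or the classifications of small $8$-divisible, resp.\ doubly-even, codes cited in Section~\ref{sec_feasible_parameters}.

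Carrying this out: for $a'=1$ the first branch turns out to be empty --- it would force $x=2$ and $k\ge 6$, but then integrality of $A_{w_1}$ and $A_{w_3}$ demands $z\equiv 3$ and $z\equiv 1\pmod 4$ simultaneously --- and in the second branch the handful of tuples with $7\le k\le 9$ fall to non-negativity of the $B_i$ together with Lemma~\ref{lem:div_one_more} (for instance $(n,k)=(112,7)$ is excluded because the number $T$ of codewords of weight divisible by $4$ would have to satisfy $2^{k-1}-T=48$, which is not a power of two); this leaves $k\le 6$ and $n\le 56$, so $K(2)=6$ and $N(2)=56$, and the $a'=1$ subcase of $a=3$ likewise contributes only $k\le 6$, $n\le 56$. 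For $a'=2$ both branches survive the first cut, but the resulting finite list of $(n,k)$ has no member with $k>11$ or $n>2024$: several of the larger candidates already violate integrality of $B_3$ (e.g.\ $n=2032$, $k=11$), and the rest are ruled out by non-negativity of higher $B_i$, by Lemma~\ref{lem:div_one_more}, or by the classification of $4$-divisible codes; the value $n=2024$ is attained by the Golay anticode above. Combining $a'\in\{0,1,2\}$ then gives $K(3)=11$ and $N(3)=2024$, and $a=1$ is immediate from the $a'=0$ analysis. The hard part will be the $a'=2$ step behind $K(3)$ and $N(3)$: Lemma~\ref{lem:divisibility_2} by itself only yields $k\le 25$ and hence the useless bound $n\le 2^{25}-3$, so almost all the work is the finite but intricate case analysis that pushes this down to $k\le 11$, $n\le 2024$; getting there means marshalling the sharpened MacWilliams/Krawtchouk inequalities, Lemma~\ref{lem:div_one_more}, and external classification results in a coordinated way, and I expect this step to require (and to be best verified by) computer assistance.
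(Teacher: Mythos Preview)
Your overall strategy matches the paper's: explicit codes for the lower bounds (the paper writes ``$[23,12]_2$ Golay code'' for $a=3$ but clearly means its $[23,11]$ dual, as you correctly identify), and a finite enumeration of MacWilliams-feasible parameters followed by ad~hoc exclusions for the upper bounds. The paper does not split into the two branches of Lemma~\ref{lem:divisibility_2}; it simply searches up to $k\le 8a+1$ and kills the few high-$k$ survivors mainly via the \emph{anticode} trick (the complement is a short $[2^k-1-n,k]$ code whose nonexistence follows from the Hamming bound or code tables), whereas you rely on Lemma~\ref{lem:div_one_more}. Both routes work here.

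However, your specific claim that for $a'=1$ ``the first branch turns out to be empty'' is false. With $a'=1$, $x=2$ one gets $A_{w_1}=\tfrac12 z(2^{k-2}-z-1)$ and $A_{w_3}=\tfrac12 z(2^{k-2}-z+1)$; since $z$ is odd and $k\ge 4$, both factors $2^{k-2}-z\mp 1$ are automatically even, so no $\bmod\ 4$ obstruction on~$z$ arises. Indeed the paper's search finds the first-branch tuples $(n,k)=(244,8)$ and $(116,7)$ (with $z=61,29$) passing all MacWilliams conditions. These must be excluded separately; the paper does it via their anticodes $[11,8,4]$ and $[11,7,4]$, but Lemma~\ref{lem:div_one_more} also disposes of them ($T=184$ resp.\ $88$, and $T-2^{k-1}$ is not a power of two). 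Likewise for $a'=2$ the single feasible tuple with $k>11$ is $(n,k)=(4040,12)$, which the paper kills via the nonexistent anticode $[55,12,24]$; again Lemma~\ref{lem:div_one_more} works ($T=3536$, $T-3072=464$). So your plan is salvageable once the erroneous emptiness claim is dropped and these extra cases are handled, but as written there is a gap.
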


\begin{proof}
	The case $a = 1$:
	The values $K(1) = 3$ and $N(1) = 4$ are a direct consequence of Theorem~\ref{thm:3wt_thm5_no_weight_odd}.%
	\footnote{Theorem~\ref{thm:3wt_thm5_no_weight_odd} refers to the code in Lemma~\ref{lem:3wt_k3}.
	It is isomorphic to the code $C^\complement$ considered in the proof of Theorem~\ref{thm:K_N_bounds} in the smallest case $a = 1$.}

	The case $a = 2$:
	Using $K(2) \leq 17$ from Theorem~\ref{thm:K_N_bounds} and $t = 2^a = 4$ from Lemma~\ref{lem:t_divisibility}, we determined all feasible parameters computationally.
	The ones with $k \geq 7$ are listed below.
	\[
		\begin{array}{rrrrr}
			n   & k & w_1 & w_2 & w_3 \\
			\hline
			244 & 8 & 120 & 122 & 124 \\
			116 & 7 &  56 &  58 &  60 \\
			112 & 7 &  54 &  56 &  58 \\
			 16 & 7 &   6 &   8 &  10
		\end{array}
	\]
	The last code has already been excluded in Section~\ref{sec_feasible_parameters} via Lemma~\ref{lem:div_one_more}.
	Of the first three codes, the anticodes would have the parameters $[11,8,4]$, $[11,7,4]$ and $[15,7,6]$.
	The application of the Hamming bound to the punctured parameters $[10,7,3]$, $[10,6,3]$ and $[14,7,5]$ shows that these codes do not exist.

	Among all feasible parameters with $k \leq 6$, the ones with the largest possible length are $n = 56$, $k = 6$, $w_1 = 26$, $w_2 = 28$ and $w_3 = 30$.
	These parameters are realized by the anticode of the binary $[7,6]_2$ parity check code.

	The case $a = 3$:
	Similar as before, based on $K(3) \leq 25$ and $t = 8$ we were able to determine all feasible parameters computationally.
	There is only a single feasible parameter set with $k\geq 12$, which is $n=4040$, $k=12$, $w_1 = 2016$, $w_2 = 2020$ and $w_3 = 2024$.
	The anticode would have the parameters $[55, 12, 24]$, which does not exist according to the online tables \cite{grassl2007bounds}.

	Among all parameters with $k \leq 11$, the ones with the largest possible length are $n = 2024$, $k = 11$, $w_1 = 1008$, $w_2 = 1012$ and $w_3 = 1016$.
	These parameters are realized by the anticode $C^\complement$ of the binary $[23,12]_2$ Golay code $C$.
	The code $C$ has the weight enumerator $1 + 506 x^8 + 1288 x^{12} + 253 x^{16}$.
\end{proof}

Theorem~\ref{them:K_N_a_small} indicates that in general, neither the lower nor the upper bound of Theorem~\ref{thm:K_N_bounds} are sharp.
We leave it as a research problem to improve the bounds and further investigate the asymptotic growth.

\section*{Data Availability Statement}
The datasets generated during and/or analysed during the current study are available from the corresponding author on reasonable request.


\providecommand{\bysame}{\leavevmode\hbox to3em{\hrulefill}\thinspace}
\providecommand{\MR}{\relax\ifhmode\unskip\space\fi MR }
\providecommand{\MRhref}[2]{%
  \href{http://www.ams.org/mathscinet-getitem?mr=#1}{#2}
}
\providecommand{\href}[2]{#2}

\newpage
\appendix

\section{Generator matrix of projective three-weights codes satisfying $\mathbf{w_1+w_2+w_3=3(q-1)n/q}$}
\label{appendix_generator_matrices}

In this appendix we list examples of generator matrices corresponding to the feasible cases listed in Section~\ref{sec_feasible_parameters}.
\begin{itemize}
  \item $q=2$, $n=4$, $k=3$, $w=[1, 2, 3]$:
        $\left(\begin{smallmatrix}
          1000\\
          0101\\
          0011\\
        \end{smallmatrix}\right)$
  \item $q=2$, $n=8$, $k=4$, $w=[2, 4, 6]$:
        $\left(\begin{smallmatrix}
          01111011\\
          01101010\\
          10101100\\
          10110010\\
        \end{smallmatrix}\right)$
  \item $q=2$, $n=8$, $k=5$, $w=[2, 4, 6]$:
        $\left(\begin{smallmatrix}
11101110\\
01010000\\
00111010\\
10001000\\
11000011\\
        \end{smallmatrix}\right)$

  \item $q=2$, $n=8$, $k=6$, $w=[2, 4, 6]$:
        $\left(\begin{smallmatrix}
00110110\\
00010001\\
01010011\\
10010110\\
10100110\\
01111101\\
       \end{smallmatrix}\right)$

\item $q=2$, $n=12$, $k=5$, $w=[4, 6, 8]$:
       $\left(\begin{smallmatrix}
       1 0 0 1 0 0 1 1 1 0 0 1\\
       0 1 0 1 0 0 1 1 1 1 0 0\\
       0 0 1 0 0 0 1 1 1 1 0 1\\
       0 0 0 0 1 0 1 1 0 0 1 0\\
       0 0 0 0 0 1 1 0 1 0 1 0\\
       \end{smallmatrix}\right)$

\item $q=2$, $n=12$, $k=6$, $w=[4, 6, 8]$:
       $\left(\begin{smallmatrix}
       1 0 0 0 0 0 0 0 0 1 1 1\\
       0 1 0 0 0 0 1 1 0 0 1 0\\
       0 0 1 0 0 0 1 1 0 1 0 0\\
       0 0 0 1 0 0 1 1 1 0 1 1\\
       0 0 0 0 1 0 0 0 1 0 1 1\\
       0 0 0 0 0 1 1 0 0 0 1 1\\
       \end{smallmatrix}\right)$

\item $q=2$, $n=16$, $k=5$, $w=[6, 8, 10]$:
       $\left(\begin{smallmatrix}
1 0 1 1 1 1 1 0 0 0 1 1 0 0 0 0\\
1 1 1 1 0 1 1 0 0 1 0 1 1 0 0 1\\
0 1 1 1 1 1 1 1 1 0 1 0 1 0 0 0\\
0 1 1 0 0 1 0 1 0 0 1 1 1 1 1 1\\
1 0 0 0 0 1 1 1 1 1 1 0 1 0 1 1\\
       \end{smallmatrix}\right)$
\item $q=2$, $n=16$, $k=6$, $w=[6, 8, 10]$:
       $\left(\begin{smallmatrix}
0 0 0 0 1 0 0 1 1 0 0 1 0 0 1 1\\
0 0 1 0 1 1 0 0 0 1 1 0 1 0 0 0\\
0 0 0 0 0 1 1 0 1 1 0 0 1 0 0 1\\
1 0 0 1 0 1 0 0 0 0 1 1 0 1 0 0\\
0 0 0 1 0 0 1 1 0 0 1 0 0 1 0 1\\
0 1 0 1 1 0 0 0 0 1 0 1 0 0 1 0\\
       \end{smallmatrix}\right)$
\item $q=2$, $n=16$, $k=5$, $w=[4, 8, 12]$:
       $\left(\begin{smallmatrix}
1 1 0 0 0 1 1 1 0 1 1 0 0 1 0 0\\
1 1 0 0 1 1 0 0 0 1 0 0 1 0 1 1\\
1 0 1 0 1 0 1 0 0 1 1 1 0 0 1 0\\
1 1 0 1 1 0 0 0 0 1 1 0 1 0 0 1\\
0 1 1 1 1 1 1 0 1 1 1 0 0 1 1 1\\
       \end{smallmatrix}\right)$
\item $q=2$, $n=16$, $k=6$, $w=[4, 8, 12]$:
       $\left(\begin{smallmatrix}
1 1 0 1 0 0 1 0 1 0 0 1 0 1 0 1\\
1 1 1 0 0 0 1 0 0 0 0 1 1 1 0 1\\
1 1 0 1 0 0 0 1 0 1 0 0 1 0 1 1\\
1 0 0 1 0 1 1 1 1 1 1 0 0 0 0 0\\
1 1 0 0 1 1 1 0 1 1 1 1 1 0 1 1\\
1 1 1 0 0 1 1 1 0 0 0 1 1 0 0 0\\
       \end{smallmatrix}\right)$
\item $q=2$, $n=16$, $k=7$, $w=[4, 8, 12]$:
       $\left(\begin{smallmatrix}
1 0 0 0 0 0 0 1 1 1 0 1 1 1 1 0\\
0 1 0 0 0 0 0 1 0 1 1 0 0 0 0 0\\
0 0 1 0 0 0 0 0 0 1 1 1 0 0 0 0\\
0 0 0 1 0 0 0 0 0 1 1 0 1 0 0 0\\
0 0 0 0 1 0 0 0 0 1 1 0 0 1 0 0\\
0 0 0 0 0 1 0 1 1 1 0 1 1 1 0 1\\
0 0 0 0 0 0 1 1 0 1 0 1 1 1 1 1\\
       \end{smallmatrix}\right)$

\item $q=2$, $n=20$, $k=5$, $w=[8, 10, 12]$:
       $\left(\begin{smallmatrix}
11101110100010010100\\
00101001011100000110\\
10100101000111011001\\
11101000100110100011\\
11010101010001000001\\
       \end{smallmatrix}\right)$

\item $q=2$, $n=24$, $k=5$, $w=[10, 12, 14]$:
       $\left(\begin{smallmatrix}
111011100010100000010111\\
011010110110000111101011\\
000011011110001101010110\\
010110011101101000001101\\
100000101111110110010001\\
       \end{smallmatrix}\right)$
\item $q=2$, $n=24$, $k=6$, $w=[8, 12, 16]$:
       $\left(\begin{smallmatrix}
1 0 0 0 0 0 0 1 0 1 1 1 1 0 0 1 0 0 0 1 1 1 1 1\\
0 1 0 0 0 1 0 1 0 0 1 1 0 1 1 1 1 1 0 0 1 1 0 0\\
0 0 1 0 0 0 0 0 1 0 1 1 0 1 1 0 1 0 1 1 0 1 1 1\\
0 0 0 1 0 1 0 1 1 1 0 1 0 1 0 0 0 1 1 1 1 0 1 0\\
0 0 0 0 1 1 0 0 0 1 0 0 1 0 1 0 0 0 1 1 0 0 0 1\\
0 0 0 0 0 0 1 0 1 0 0 0 0 1 1 0 0 0 0 0 1 1 1 1\\
       \end{smallmatrix}\right)$
\item $q=2$, $n=24$, $k=7$, $w=[8, 12, 16]$:
       $\left(\begin{smallmatrix}
001111111011101110110001\\
010111110111011101101010\\
010110111111111010101001\\
100111101110111110011001\\
001111011101111101110010\\
100101111111110101011010\\
000000000011111111111100\\
       \end{smallmatrix}\right)$
\item $q=2$, $n=24$, $k=8$, $w=[8, 12, 16]$:
       $\left(\begin{smallmatrix}
100011011110010011111111\\
011100111001100010001101\\
001101100110110000010111\\
001010101101010101100101\\
011101110001000111010001\\
110111011011101100101110\\
010011010110010111010001\\
001111011111101100111001\\
       \end{smallmatrix}\right)$
\item $q=2$, $n=24$, $k=9$, $w=[8, 12, 16]$:
       $\left(\begin{smallmatrix}
011110011110001111001111\\
111101111000010111110101\\
001011101100101000011011\\
111110100101000000101101\\
001100110000101011110101\\
010101000001110001111011\\
001101010110000110101011\\
110000101110101110100001\\
001111111100001110110111\\
       \end{smallmatrix}\right)$
\item $q=2$, $n=24$, $k=10$, $w=[8, 12, 16]$:
       $\left(\begin{smallmatrix}
    1 0 0 0 0 0 0 0 0 0 0 1 1 0 1 0 1 0 0 0 1 0 1 1\\
    0 1 0 0 0 0 0 0 0 1 0 1 0 0 1 0 0 1 0 0 0 1 1 1\\
    0 0 1 0 0 0 0 0 0 0 0 1 1 1 1 0 0 0 0 1 0 1 0 1\\
    0 0 0 1 0 0 0 0 0 0 0 0 0 1 1 1 0 0 1 1 1 1 0 0\\
    0 0 0 0 1 0 0 0 0 1 0 1 0 0 1 1 0 0 1 0 1 0 1 0\\
    0 0 0 0 0 1 0 0 0 0 0 1 0 0 1 0 0 1 1 1 1 0 0 1\\
    0 0 0 0 0 0 1 0 0 0 0 1 0 0 1 0 1 0 1 1 0 1 1 0\\
    0 0 0 0 0 0 0 1 0 1 0 1 0 0 0 1 0 0 1 1 0 1 0 1\\
    0 0 0 0 0 0 0 0 1 1 0 1 0 0 0 1 1 1 0 0 1 0 0 1\\
    0 0 0 0 0 0 0 0 0 0 1 0 0 0 0 1 1 0 1 1 1 0 1 1\\
       \end{smallmatrix}\right)$
\item $q=2$, $n=24$, $k=11$, $w=[8, 12, 16] $:
       $\left(\begin{smallmatrix}
1 0 0 0 0 0 0 0 0 0 0 0 0 1 1 0 1 1 0 1 1 0 1 0\\
0 1 0 0 0 0 0 0 0 0 0 0 0 1 0 1 1 1 0 0 0 1 1 1\\
0 0 1 0 0 0 0 0 0 0 0 0 1 1 0 0 1 0 0 1 1 1 0 1\\
0 0 0 1 0 0 0 0 0 0 0 0 1 1 0 0 0 1 1 1 0 0 1 1\\
0 0 0 0 1 0 0 0 0 0 1 0 0 1 1 0 0 1 0 1 0 1 0 1\\
0 0 0 0 0 1 0 0 0 0 1 0 0 1 1 1 1 0 0 0 1 0 0 1\\
0 0 0 0 0 0 1 0 0 0 0 0 0 1 1 0 0 0 1 0 1 1 1 1\\
0 0 0 0 0 0 0 1 0 0 1 0 0 0 1 0 1 1 1 0 0 0 1 1\\
0 0 0 0 0 0 0 0 1 0 1 0 0 0 1 0 1 0 1 1 1 1 0 0\\
0 0 0 0 0 0 0 0 0 1 0 0 0 0 1 1 0 1 1 1 0 1 1 0\\
0 0 0 0 0 0 0 0 0 0 0 1 0 0 0 1 1 0 1 1 1 0 1 1\\
       \end{smallmatrix}\right)$

\item $q=2$, $n=32$, $k=6$, $w=[12, 16, 20]$:
       $\left(\begin{smallmatrix}
100100001101011011001 11100111000\\
100110011111001001110 00010100011\\
111101110001110001101 01101110110\\
000011111111111100011 11000000000\\
101010100001011110110 01110100100\\
000001001001001011011 11011101101\\
       \end{smallmatrix}\right)$
\item $q=2$, $n=32$, $k=7$, $w=[12, 16, 20]$:
       $\left(\begin{smallmatrix}
01111000000111110000111111111110\\
11100110110010000111111000110000\\
10011111001000101101101100100100\\
10110110011001001011011100101000\\
01111100000000001111111111011111\\
11001111100100001110110100100010\\
01111110000000000000000001111110\\
       \end{smallmatrix}\right)$
\item $q=2$, $n=32$, $k=8$, $w=[12, 16, 20]$:
       $\left(\begin{smallmatrix}
11101100110111010010100111110011\\
10011000100011110110110010001101\\
11011101101110100001011110100111\\
00110101000110101101110100011001\\
10111111001101000110101101001111\\
01101010001100011111101000110001\\
01111110011011001001011011011011\\
11000100011001111011010001100101\\
       \end{smallmatrix}\right)$
\item $q=2$, $n=32$, $k=9$, $w=[12, 16, 20]$:
       $\left(\begin{smallmatrix}
1 0 0 0 0 0 0 0 0 0 1 0 1 1 0 1 1 0 0 1 1 1 1 0 0 0 0 0 1 0 1 0\\
0 1 0 0 0 0 0 0 0 1 0 0 0 0 0 1 1 1 0 1 1 0 0 0 0 1 1 0 1 1 1 0\\
0 0 1 0 0 0 0 0 0 0 0 0 1 0 0 1 0 1 0 1 0 1 1 0 1 1 0 0 1 1 0 1\\
0 0 0 1 0 0 0 0 0 1 0 0 1 0 1 0 1 0 1 1 1 0 1 1 1 1 1 1 0 1 1 0\\
0 0 0 0 1 0 0 0 0 0 0 0 0 1 0 1 1 0 1 1 0 0 1 0 1 1 1 0 0 0 1 1\\
0 0 0 0 0 1 0 0 0 1 1 1 1 1 0 0 1 1 0 1 0 0 0 0 0 0 1 1 1 0 0 0\\
0 0 0 0 0 0 1 0 0 1 1 0 0 0 1 1 0 0 0 0 0 0 1 1 1 0 1 1 0 0 1 1\\
0 0 0 0 0 0 0 1 0 1 1 0 1 1 0 0 0 0 1 0 0 0 1 1 0 0 0 1 1 1 1 0\\
0 0 0 0 0 0 0 0 1 0 0 1 1 0 0 1 1 1 1 0 1 0 1 0 0 0 1 0 1 0 0 1\\
       \end{smallmatrix}\right)$
\item $q=2$, $n=32$, $k=10$, $w=[12, 16, 20]$:
       $\left(\begin{smallmatrix}
11111000001111110000001000000000\\
00000111111111110000000100000000\\
00011000110000111111100010000000\\
01101000010011010011110001000000\\
10110011011101000100100000100000\\
01100101110101001000110000010000\\
11010010110111000001010000001000\\
11100011011010100011000000000100\\
10101010110011101000100000000010\\
10011001000011101011010000000001\\
       \end{smallmatrix}\right)$
\item $q=2$, $n=32$, $k=6$, $w=[8, 16, 24]$:
       $\left(\begin{smallmatrix}
01111011100101011111111101101111\\
01001000011110111001110001101001\\
01010011011000001100111010110011\\
10011110100001010010011010110110\\
01000110101011010100110101011010\\
10011000101110001110100111100010\\
       \end{smallmatrix}\right)$
\item $q=2$, $n=32$, $k=7$, $w=[8, 16, 24]$:
       $\left(\begin{smallmatrix}
11010100001101001100010111100101\\
01010011101101010011000101010110\\
11101110100111100010000010110100\\
11100110010010111100111001100000\\
11111101000001001111100001011000\\
01000011101110010000011101110110\\
10001000111111010101110011010000\\
       \end{smallmatrix}\right)$
\item $q=2$, $n=32$, $k=8$, $w=[8, 16, 24]$:
       $\left(\begin{smallmatrix}
1 0 0 0 0 0 0 1 0 1 1 1 1 1 1 0 1 0 0 0 0 0 0 1 0 1 1 1 1 1 1 0\\
0 1 0 0 0 0 0 1 0 1 0 0 0 0 0 1 0 1 0 0 0 0 0 1 0 1 0 0 0 0 0 1\\
0 0 1 0 0 0 0 1 0 0 1 0 0 0 0 1 0 0 1 0 0 0 0 1 0 0 1 0 0 0 0 1\\
0 0 0 1 0 0 0 1 0 0 0 1 0 0 0 1 1 1 1 0 1 1 1 0 1 1 1 0 1 1 1 0\\
0 0 0 0 1 0 0 1 0 0 0 0 1 0 0 1 1 1 1 1 0 1 1 0 1 1 1 1 0 1 1 0\\
0 0 0 0 0 1 0 1 0 0 0 0 0 1 0 1 1 1 1 1 1 0 1 0 1 1 1 1 1 0 1 0\\
0 0 0 0 0 0 1 1 0 0 0 0 0 0 1 1 1 1 1 1 1 1 0 0 1 1 1 1 1 1 0 0\\
0 0 0 0 0 0 0 0 1 1 1 1 1 1 1 1 1 1 1 1 1 1 1 1 0 0 0 0 0 0 0 0\\
       \end{smallmatrix}\right)$
\item $q=2$, $n=32$, $k=9$, $w=[8, 16, 24]$:
       $\left(\begin{smallmatrix}
10101110000101001100011100001111\\
01010101001101110001010100101011\\
10100010100110111000011100001111\\
00110000101001101110100101101101\\
11001111111111111110110110100101\\
01101111010110010000101010101010\\
10010000000000000001100010001110\\
01100000000000000001000110011100\\
01011000010100110111010100101011\\
       \end{smallmatrix}\right)$

\item $q=2$, $n=40$, $k=6$, $w=[16, 20, 24]$:\\
       $\left(\begin{smallmatrix}
0100111000101111011110010010110000010011\\
1010010100011111101111001001011000001001\\
1101001010000111110111101100100100000101\\
0110100101001011110011110110010010000011\\
0011010010101101111001111011000001001001\\
1001100001011110111100110101100000100101\\
       \end{smallmatrix}\right)$
\item $q=2$, $n=40$, $k=7$, $w=[16, 20, 24]$:\\
       $\left(\begin{smallmatrix}
1000011111010101011111010011111001101010\\
0000100101101011101110100000011011011101\\
0100000011110101110110010001001100101111\\
0001001001011101111101000000110100111011\\
1000011110101011011110101101111001110100\\
0010010000111011111010001001100011010111\\
0111100000000111011110000000000111111111\\
       \end{smallmatrix}\right)$
\item $q=2$, $n=40$, $k=8$, $w=[16, 20, 24]$:\\
       $\left(\begin{smallmatrix}
1111100000011111011111000000111111010110\\
0110011000101000110100111101011011000011\\
1100010001110000101001111011110101000011\\
0001100110100010100101101111010111000011\\
0000011111000000111111000001000001011010\\
1000100011100001110010110111101011000011\\
0011001100100100101010011111101101000011\\
1111100000011111000000000001000001111001\\
       \end{smallmatrix}\right)$
\item $q=2$, $n=40$, $k=9$, $w=[16, 20, 24]$:\\
       $\left(\begin{smallmatrix}
0100010100011110100101111111111011001101\\
1101110000010010111011001100011011000101\\
0001001100100111100011111100111011111110\\
0000011101111101110100001100101000110110\\
1101001011110010001000001010000011100001\\
1000101000100000100110010000010010111111\\
1111110100101101100101001001011111010101\\
0010100010101001000010011111000010110100\\
0110010000100111001101001010010010110000\\
       \end{smallmatrix}\right)$

\item $q=2$, $n=48$, $k=6$, $w=[22, 24, 26]$:\\
       $\left(\begin{smallmatrix}
100100010010111011001011111101000110011100111001\\
100010001101011101100101111110000011001110111100\\
100001010110101010110110111011100001000111011110\\
100000101111010001011111011101110000100011001111\\
110000010011101100101111101110011000110001100111\\
101000001101110110010111110011001100111000110011\\
       \end{smallmatrix}\right)$
\item $q=2$, $n=48$, $k=6$, $w=[20, 24, 28]$:\\
       $\left(\begin{smallmatrix}
111011100011101101000010100000100001101111100110\\
001011011110010000100111110011000100111011100100\\
111110001110011011000100001000001010011111001101\\
010110110101100000001111101110000001111110001001\\
111101010101110110000001010000010100110111010011\\
100101101011001000010111011101000010111101010010\\
       \end{smallmatrix}\right)$
\item $q=2$, $n=48$, $k=7$, $w=[20, 24, 28]$:\\
       $\left(\begin{smallmatrix}
100010111101100101011111000000010001010100111101\\
001010110111010101110100011001000101000111110001\\
000101111011001010111110001000100010100101111010\\
010100101111100011101000111010000010001111100110\\
010001111110010110101011100000001000101110011110\\
101000011111001111010001110100000100010111001101\\
000000000000111000000111111111111111111111111100\\
       \end{smallmatrix}\right)$
\item $q=2$, $n=48$, $k=8$, $w=[20, 24, 28]$:\\
$\left(\begin{smallmatrix}
111111111000000000111111111100000000000001000000\\
000000000111111111111111111100000000000000100000\\
000011111000011111000001111111111111000000010000\\
001100111001100111001110001100001111111100001000\\
010101001010101001010000110101110011000110000100\\
100101010111111110010010011010010101011010000010\\
111101010001111011100110111000100110101000000001\\
101001000000010011001010111001011011011110000000\\
       \end{smallmatrix}\right)$
\item $q=2$, $n=48$, $k=7$, $w=[16, 24, 32]$:\\
       $\left(\begin{smallmatrix}
100101110010111001011100101110010111001011000000\\
110100000110111111100011001101011010010001010100\\
100111000010110000001001010101111011101100001111\\
110101101001000111010000011011111110001100010001\\
111010000011011111110001100110101101001000001010\\
100010000111110101001100010000111110101001101101\\
111100111100111100111100111100111100111100011110\\
       \end{smallmatrix}\right)$
\item $q=2$, $n=48$, $k=8$, $w=[16, 24, 32]$:\\
       $\left(\begin{smallmatrix}
100111101111101111110001110001110111101011101100\\
101000110000110000010010010010001000101100110000\\
001010011001011001001110001110001010001011101111\\
001010101010010101111101000010100101111010011001\\
100111111010111010100100100100000111101011100000\\
111001010001010001110110110110001001110111010000\\
001010011001011001110001110001111010001011101100\\
001111010011101100010000101111100000110110101001\\
       \end{smallmatrix}\right)$
\item $q=2$, $n=48$, $k=9$, $w=[16, 24, 32]$:\\
       $\left(\begin{smallmatrix}
110110000011101101100000111010100001011000101111\\
100111011100100110001000110110010001101111001010\\
110000000011110011111111000001101000011011110010\\
100100011100101001000111001011111101001110100100\\
011001010101101001101010100100111101111001000001\\
001100011110100011000111101010100001011000101111\\
110011001100110011001100110010010101010101010110\\
101011011100011010110111000100000011101001110100\\
111011101011111110111010111101011100010110001011\\
       \end{smallmatrix}\right)$
\item $q=2$, $n=48$, $k=10$, $w=[16, 24, 32]$:\\
       $\left(\begin{smallmatrix}
100100110000001011001110110100110001111101101100\\
000110000011010001000000101110111111110011100111\\
110001010110011100100101011100100101011011000101\\
000001101100110001101010001110010101001111111001\\
011100100101100100111010100100111010010101110010\\
111110000111001100010010001100010010011111111000\\
111111100001100011000100100011000100000111111110\\
011110011011101000111001010111000110010010000110\\
001011000001111001011011111001011011000100101100\\
101010011001010111101011101000010100011001010110\\
       \end{smallmatrix}\right)$
\item $q=2$, $n=48$, $k=11$, $w=[16, 24, 32]$:\\
       $\left(\begin{smallmatrix}
100110100101001011010101010101011010100110100110\\
000110110011001011010101010101011010011001011001\\
000000001111000000000000000000111111111111000000\\
010011100000000000000011000011001111000000110011\\
001111000000000110000011000000001111110011000000\\
000000000000100111000000000000110011110011111100\\
000000000000011110000000000000001111111100110011\\
000000000000000000110011000000111100110011110011\\
000000000000000000001111000000001111001111111100\\
000000000000000000000000110011110011001111110011\\
000000000000000000000000001111001111110011001111\\
       \end{smallmatrix}\right)$
\item $q=2$, $n=48$, $k=12$, $w=[16, 24, 32]$:\\
       $\left(\begin{smallmatrix}
100101000010110010101011001011001101001010110111\\
010011000010110010101011001101010010110010101111\\
001111000000000000000001100110000111100111100000\\
000000100001100000000110000001111000000111111001\\
000000010001100000000111100111100110000001100010\\
000000001001110000000111100110011111111111111000\\
000000000111100000000110000001111111111000000000\\
000000000000001000000110000111100111100110011100\\
000000000000000110000111100111111000011000011000\\
000000000000000001100111100001100001111111100000\\
000000000000000000011001100001111111100001111000\\
000000000000000000000000011111111110011111100000\\
       \end{smallmatrix}\right)$

\item $q=2$, $n=52$, $k=6$, $w=[24, 26, 28]$:\\
       $\left(\begin{smallmatrix}
0110011010101010101101010010101001010101010101100000\\
1110110110000110011011001001100100110011001100010000\\
1101110001100001111000111000011100001111000011001000\\
0011110000011111111000000111111100000000111111000100\\
0000001111111111111000000000000011111111111111000010\\
0000000000000000000111111111111111111111111111000001\\
       \end{smallmatrix}\right)$

\item $q=2$, $n=56$, $k=6$, $w=[26, 28, 30]$:\\
       $\left(\begin{smallmatrix}
01011001100101011111011100100001001101100011111010000100\\
10001111101110110100110100101000001001111001001111101010\\
01110111001001000011000111011111010001111001011111010000\\
10011111000011000111100111111000111100000111000100100001\\
11011010110001000100011000011100010110111101010010111111\\
00000110101101000110100101000110011111110000110000100111\\
       \end{smallmatrix}\right)$
\item $q=2$, $n=56$, $k=7$, $w=[24, 28, 32]$:\\
       $\left(\begin{smallmatrix}
11001100110101111111011001000101111101000001100011011101\\
01100111011010011111101100101010111010100010110001101110\\
10110010101101101111100110011101011001010001011000110111\\
11011001010110110111110011001110101000101000101101011011\\
01101100101011111011101001101111010000010100010111101101\\
00110111010101111101100100110111101100001010001011110110\\
10011011101010111110110010011011110010000111000100111011\\
       \end{smallmatrix}\right)$
\item $q=2$, $n=56$, $k=8$, $w=[24, 28, 32]$:\\
       $\left(\begin{smallmatrix}
00111011101011010100101010111101100110111001010100001111\\
01110110010110111001010001111011001101110010101100011110\\
11101100101101100010100111110110011011100101011000111100\\
11011001011011010101001011101101110111001010110001111000\\
10110011110110101010010011011011101110010101100111110000\\
01100111101101010100100110110111011100111011001011100001\\
11001110011010111001001001101111111001100110010111000011\\
10011101110101100010010111011110110011011100101010000111\\
       \end{smallmatrix}\right)$
\item $q=2$, $n=56$, $k=9$, $w=[24, 28, 32]$:\\
       $\left(\begin{smallmatrix}
1 0 0 0 0 0 0 0 0 1 1 1 1 1 0 0 0 0 0 0 1 1 1 1 1 1 1 1 0 0 0 0 0 0 1 1 1 1 1 1 1 1 0 0 0 0 1 1 1 1 1 1 1 1 1 1\\
0 1 0 0 0 0 1 0 0 1 1 1 0 0 0 1 0 0 0 0 0 1 0 1 1 0 0 1 0 1 0 0 1 1 0 1 1 1 0 1 1 1 1 0 0 1 0 1 0 0 0 0 1 0 1 0\\
0 0 1 0 0 0 1 0 0 1 0 1 1 0 0 1 1 1 0 0 0 0 0 0 1 0 1 0 0 1 1 1 1 1 1 0 1 0 1 0 1 0 0 1 0 1 0 0 1 0 0 0 1 1 0 0\\
0 0 0 1 0 0 1 0 0 0 0 1 1 0 0 0 0 1 1 0 1 0 1 1 1 1 1 0 0 0 0 0 1 1 0 0 1 1 1 1 0 0 0 0 1 1 1 1 0 1 0 0 1 0 0 0\\
0 0 0 0 1 0 1 0 1 0 1 0 1 0 0 0 0 1 1 0 0 1 1 0 0 0 1 0 0 1 0 1 1 1 0 1 1 0 1 1 1 0 0 1 0 0 0 1 0 0 1 1 0 0 1 0\\
0 0 0 0 0 1 1 0 1 1 1 0 1 0 0 0 1 0 1 1 1 0 1 1 0 0 0 0 0 0 0 1 1 1 1 0 0 0 0 0 0 0 0 1 1 1 0 0 0 1 1 1 1 1 0 0\\
0 0 0 0 0 0 0 1 0 0 1 1 1 1 0 1 1 1 1 0 1 1 1 0 1 1 0 0 0 0 0 1 1 0 1 0 0 1 1 0 0 0 0 1 0 0 0 0 1 1 1 0 0 0 1 0\\
0 0 0 0 0 0 0 0 0 0 0 0 0 0 1 1 1 1 1 1 1 1 1 1 1 1 1 1 0 0 0 0 0 0 0 0 0 0 0 0 0 0 1 1 1 1 1 1 1 1 1 1 1 1 1 1\\
0 0 0 0 0 0 0 0 0 0 0 0 0 0 0 0 0 0 0 0 0 0 0 0 0 0 0 0 1 1 1 1 1 1 1 1 1 1 1 1 1 1 1 1 1 1 1 1 1 1 1 1 1 1 1 1\\
       \end{smallmatrix}\right)$

\item $q=2$, $n=64$, $k=7$, $w=[28, 32, 36]$:\\
       $\left(\begin{smallmatrix}
1010111010110001100001101011011110110111100111100110101000101001\\
0011000011000101110101110110111111101100100010100011100111100111\\
0111110001001111011001100101001001100110001001100101000001110111\\
0001110111000100110010001000100110111101001100100000001110111000\\
1100000111011000010011000010101011110010000100110011100000111010\\
0000101101001100010001110111100010010100111011100001111010010111\\
1111100010001110110011011010000011001110010011001000000011101111\\
       \end{smallmatrix}\right)$
\item $q=2$, $n=64$, $k=8$, $w=[28, 32, 36]$:\\
       $\left(\begin{smallmatrix}
1000000000111101101011011100000010010011001110100110001100100011\\
0100000000100011011110110010000011011010101001110101001010110010\\
0010000001010111010010111101000111011000000111010000110000111100\\
0001000000101011101001011110100011101100000011101000011000011110\\
0000100001010011001001001011010111000011010010011110011001101010\\
0000010100001111111001101001110010100111101001000000010110000011\\
0000001101010111001100001000101001011001001110110111100011111011\\
0000000011110110101101110000001001001100111010011000110010001101\\
       \end{smallmatrix}\right)$
\item $q=2$, $n=64$, $k=9$, $w=[28, 32, 36]$:\\
       $\left(\begin{smallmatrix}
1000000000111101101011011100000010010011001110100110001100100011\\
0100000000100011011110110010000011011010101001110101001010110010\\
0010000000101100000100000101000011111110011010011100101001111011\\
0001000000101011101001011110100011101100000011101000011000011110\\
0000100000101000011111110011010011100101001111010010000000101101\\
0000010100001111111001101001110010100111101001000000010110000011\\
0000001100101100011010110000101101111111010011111011111010111100\\
0000000010001101111011001000001101101010100111010100101011001010\\
0000000001111011010110111000000100100110011101001100011001000111\\
       \end{smallmatrix}\right)$
\item $q=2$, $n=64$, $k=7$, $w=[24, 32, 40]$:\\
       $\left(\begin{smallmatrix}
0100111111000111001110101011100000010111100001010100101001011100\\
0010111101111100111000100110011001001110100100010010100101110010\\
1010010111110011100110010101110100001011110000001010110100001110\\
0101011011111001110011001000111010001101101000000101101010100110\\
1001011110101110011101010011001000101111000010001001110010111000\\
1111100000000000111111111100000111111111100000000000100000111111\\
1111100000000000000000000011111000001111111111111111100000111110\\
       \end{smallmatrix}\right)$
\item $q=2$, $n=64$, $k=8$, $w=[24, 32, 40]$:\\
       $\left(\begin{smallmatrix}
1011111101010110101001010001010100011100001111101000011000001101\\
0111111110101001010101100000101100011100001111100100010100001101\\
0011111111111111111100110001100011111110001000001101011100011101\\
1100011111111100000000001111100000011100001000111110100011110101\\
1110100110010010010000001100011101101111100011100010110010010111\\
1111010001001001001000000110011110110110110101100010110001010111\\
1101101000100100100100001010011111011011010110100010110000110111\\
0011100000000011111111001111111000000010001111100010001111100110\\
       \end{smallmatrix}\right)$
\item $q=2$, $n=64$, $k=9$, $w=[24, 32, 40]$:\\
       $\left(\begin{smallmatrix}
1000110111000011000011000010010100111011001011011110011010011011\\
0100000100001100111100111101100111110111111011010000110000110000\\
0000010000010011110011110110111111011111011101100001000011000010\\
1011001111100111101100111100001000010001011010001000001010111101\\
1011001011001000101000100110111001101001101101100001110101110100\\
1000110001110011100101010101111101111111101111000011011010110111\\
0000011101110010111111011001111101010110000101001011000010011000\\
0001101100001001100100001001110001001010010101001010001011010000\\
1111101001010011010101100001011001100100001001111100000011101110\\
       \end{smallmatrix}\right)$
\item $q=2$, $n=64$, $k=10$, $w=[24, 32, 40]$:\\
       $\left(\begin{smallmatrix}
1000000100001100100000110000100110010001010101101110000011011001\\
0100000011011100000100101000010110100011101100101010001000001001\\
0010000111011100110001101000000001001110101001100000011000010010\\
0001000011001100111010000100100110000001010110000100110101110000\\
0000100010010010000011001001100001000000011100010011010011101111\\
0000010110000010001111011010100101001111011100110111111110000001\\
0000001100000010110110011011010100010010100100000101101110000010\\
0000000000111010001100001010100000100001100010101001011001101111\\
0000000000000001111111111111110000000111111111111111100000000010\\
0000000000000000000000000000001111111111111111111111100000000001\\
       \end{smallmatrix}\right)$
\item $q=2$, $n=64$, $k=11$, $w=[24, 32, 40]$:\\
       $\left(\begin{smallmatrix}
0000011010000001000110111110001110100101111000110010010000000000\\
0111001100100010000011101000001011100100101101101010001000000000\\
0000101110101011000110100111100101110100100101000000000100000000\\
0111101110001010001100101011001010000101010011010000000010000000\\
1001110100000110100000011010000101011011000100111001100001000000\\
1101010010011110010001000110111100000011001100001000100000100000\\
0101010001111110001011000001110100001000000011000111100000010000\\
0011001111111110000111000000010010111000111110111111100000001000\\
0000111111111110000000111111110001111000000001111111100000000100\\
0000000000000001111111111111110000000111111111111111100000000010\\
0000000000000000000000000000001111111111111111111111100000000001\\
       \end{smallmatrix}\right)$
\item $q=2$, $n=64$, $k=12$, $w=[24, 32, 40]$:\\
       $\left(\begin{smallmatrix}
0000110001101110000100100100100011011000011011011110100000000000\\
1011110000100110010000001100010000111101001110111000010000000000\\
1010110001001010110010000000101111110000001100101011001000000000\\
1111100000001100000010100100111101000011011011101000000100000000\\
0111000000001010110110001100011000000110111100110011000010000000\\
0000000100001001111110011010010101001101010101010101000001000000\\
0101011111010000010001111001110011000100100000101100000000100000\\
0011010011001000001111111001111111011111010011011100000000010000\\
0000101111000110000000000111101111000011001111000011000000001000\\
0000011111000001111111111111100000111111000000111111000000000100\\
0000000000111111111111111111100000000000111111111111000000000010\\
0000000000000000000000000000011111111111111111111111000000000001\\
       \end{smallmatrix}\right)$
\item $q=2$, $n=64$, $k=7$, $w=[16, 32, 48]$:\\
       $\left(\begin{smallmatrix}
1110001110001101110111000101111010000010001011110100001100100110\\
0000101111010000111110100101001111100101101101100000110011100010\\
0010111101001001011011001011000001111100001001111100101110001001\\
0111101000011110000111000111110010110100101000011010110001110001\\
1110100001011000011101111111001010010010110001101011000111000100\\
0111000111000100010001111101000010111011101010000101111010010011\\
1111010000100011110010101111100101010110100000110101100011100010\\
       \end{smallmatrix}\right)$
\item $q=2$, $n=64$, $k=8$, $w=[16, 32, 48]$:\\
       $\left(\begin{smallmatrix}
1000110100011000110100010111001011100101111111000110111001010000\\
1101100001001100110101011110000010101100110010111001000110111001\\
1111011000010010001100101100111011110000010010101110010001111001\\
0011110110000101011101110000010101100111011010110100110100001001\\
0011110110000101011100101011001110111100001010110100110100000011\\
1000010011110110111000111011110000010101101011101000101000101001\\
1000010011110101000111010110011101111000000010010111101000110101\\
0111101100001001000111100000101011001110110100010111101000110011\\
       \end{smallmatrix}\right)$
\item $q=2$, $n=64$, $k=9$, $w=[16, 32, 48]$:\\
       $\left(\begin{smallmatrix}
0101011110000011010000110100011010001101001110111011100010011101\\
0110111110010000000011011011011011011011000101101000111101010000\\
1010101010100100011101000110100011010001101010111100101011010101\\
1010101010100001101100011010001101000110101010111100101011010101\\
0101010011100100011101000110100011010001100011111010111001000111\\
1100010100110110100010110011101111000001101111010100010110000101\\
1111110010011100101111001011100101110010010010000110100100100100\\
0001110011011000110111100000101011001110111110001101011000010001\\
0000000110110111001101110010111001011100110110000001101101101100\\
       \end{smallmatrix}\right)$
\item $q=2$, $n=64$, $k=10$, $w=[16, 32, 48]$:\\
       $\left(\begin{smallmatrix}
0010011111000000110010110100111010101001010101111111110000011000\\
1001000110100111000010000111101011001111000111011101101001100001\\
1110001110010011001001010000011111011111010001001000011011011010\\
1110000011100110100100010100000100010111110111111011000110101100\\
1101100000000001111000000000101000000000000001010000000000111100\\
0101010010100000111011001001001110101100010011101011010111100011\\
1010100110001111000111110101110001010100001100010110011100011100\\
1111111000011101001010100101010010001011010100101110000111100100\\
1111110010110110010011110000100100100001111001001100101101001001\\
1111110101101001001011100001010010000011110100101101011010100100\\
       \end{smallmatrix}\right)$
\item $q=2$, $n=64$, $k=11$, $w=[16, 32, 48]$:\\
       $\left(\begin{smallmatrix}
0100001000110000111100001111111100000000000000000000000000000000\\
0010000100001100000011111111111100000000000000000000000000000000\\
0001000010000011111111110000111100000000000000000000000000000000\\
0000100001111111001100110011001100000000000000000000000000000000\\
0000011111010101010101010101010100000000000000000000000000000000\\
0000000000000000000000000000000011000011110000111111111100000000\\
0000000000000000000000000000000000110000001111111111110011000000\\
0000000000000000000000000000000000001111111111000011110000110000\\
0000000000000000000000000000000011111100110011001100110000001100\\
0000000000000000000000000000000011111111001100110000110000000011\\
1111111111111111111111111111111110101010101010101010101010101010\\
       \end{smallmatrix}\right)$

\item $q=3$, $n=3$, $k=3$, $w=[1, 2, 3]$:
       $\left(\begin{smallmatrix}
001\\
112\\
210\\
       \end{smallmatrix}\right)$

\item $q=3$, $n=6$, $k=3$, $w=[3, 4, 5]$:
       $\left(\begin{smallmatrix}
111101\\
121011\\
100122\\
       \end{smallmatrix}\right)$

\item $q=3$, $n=9$, $k=3$, $w=[5, 6, 7]$:
       $\left(\begin{smallmatrix}
011011001\\
110002111\\
100121202\\
       \end{smallmatrix}\right)$
\item $q=3$, $n=9$, $k=4$, $w=[3, 6, 9]$:
       $\left(\begin{smallmatrix}
100111110\\
010201211\\
221211121\\
112112221\\
       \end{smallmatrix}\right)$

\item $q=3$, $n=18$, $k=4$, $w=[9, 12, 15]$:
       $\left(\begin{smallmatrix}
111111110000001000\\
001112221111100100\\
120120120112210010\\
002000221120110001\\
       \end{smallmatrix}\right)$
\item $q=3$, $n=18$, $k=5$, $w=[9, 12, 15]$:
       $\left(\begin{smallmatrix}
111111110000010000\\
000011221111001000\\
011201010012100100\\
111122220012200010\\
012021121200000001\\
       \end{smallmatrix}\right)$
\item $q=3$, $n=18$, $k=6$, $w=[9, 12, 15]$:
       $\left(\begin{smallmatrix}
110011111100100000\\
001111122200010000\\
010201201111001000\\
112112212202000100\\
220001011221000010\\
122011022001000001\\
       \end{smallmatrix}\right)$

\item $q=3$, $n=27$, $k=4$, $w=[15, 18, 21]$:
       $\left(\begin{smallmatrix}
111111111111110000000001000\\
000011111222221111111000100\\
112200122001120011222110010\\
120212001120211202012120001\\
       \end{smallmatrix}\right)$
\item $q=3$, $n=27$, $k=5$, $w=[15, 18, 21]$:
       $\left(\begin{smallmatrix}
011011001111111111111110101\\
121000110120001122021011111\\
210101012002010120222102222\\
100110102211111221112112020\\
002201101200222211110001221\\
       \end{smallmatrix}\right)$
\item $q=3$, $n=27$, $k=6$, $w=[15, 18, 21]$:
       $\left(\begin{smallmatrix}
000111110100110011111111101\\
001210101001100122112121011\\
012200010011001212122221101\\
122000100110012110221221011\\
220002001101121100211222101\\
200021011010211002111212011\\]
       \end{smallmatrix}\right)$
\item $q=3$, $n=27$, $k=5$, $w=[9, 18, 27]$:
       $\left(\begin{smallmatrix}
111111110000000000000010000\\
000000001111111100000001000\\
001112220011122211111000100\\
120120121201201201122100010\\
121202011212020120112200001\\
       \end{smallmatrix}\right)$

\item $q=3$, $n=36$, $k=5$, $w=[21, 24, 27]$:
       $\left(\begin{smallmatrix}
1 1 1 0 0 1 0 0 1 1 1 1 0 1 1 1 1 1 1 1 1 0 1 1 1 1 1 0 1 1 1 0 1 0 1 1\\
0 0 1 1 1 2 1 1 0 0 1 2 1 1 0 2 2 2 2 2 1 1 2 0 2 2 1 1 1 0 2 1 1 0 1 0\\
2 2 0 0 1 0 0 2 2 1 1 0 2 2 1 2 1 2 1 1 2 2 1 2 1 0 0 2 1 2 2 1 0 1 1 0\\
0 2 2 2 0 1 1 0 0 2 1 2 2 0 2 1 2 1 2 1 1 1 0 1 2 1 2 2 0 1 2 1 0 1 0 1\\
1 0 0 2 2 0 2 1 2 0 1 1 1 2 2 0 2 2 1 2 2 2 2 2 0 2 1 2 2 1 0 0 1 1 0 1\\
       \end{smallmatrix}\right)$
\item $q=3$, $n=36$, $k=6$, $w=[21, 24, 27]$:
       $\left(\begin{smallmatrix}
1 0 1 0 1 1 1 1 1 1 0 0 0 0 1 0 1 1 0 0 1 1 0 1 0 0 1 0 1 1 1 0 1 1 0 1\\
1 1 0 1 0 2 0 2 1 1 1 0 1 0 0 1 0 1 1 0 0 1 1 0 1 0 0 1 0 2 2 1 0 2 1 0\\
2 1 1 0 2 0 0 0 2 1 1 1 1 1 0 0 2 0 0 1 0 0 1 1 0 1 0 0 1 0 1 2 1 0 2 1\\
0 2 1 1 0 1 2 0 0 2 1 1 0 1 2 0 0 2 0 0 1 0 0 1 1 0 1 1 0 2 0 1 2 2 0 2\\
1 0 2 1 2 0 2 1 0 0 2 1 2 0 2 2 0 0 1 0 0 1 0 0 1 1 0 2 1 0 1 0 1 1 2 0\\
0 1 0 2 2 1 2 1 1 0 0 2 0 2 0 2 1 0 0 1 0 0 1 0 0 1 1 1 2 2 0 1 0 0 1 2\\
       \end{smallmatrix}\right)$
\end{itemize}

\end{document}